\documentclass[12pt]{amsart}
\usepackage{amssymb}
\usepackage[all]{xy}

\textwidth=160mm
\textheight=200mm
\topmargin=20mm
\hoffset=-20mm

\newtheorem{theorem}{Theorem}[section]
\newtheorem{definition}[theorem]{Definition}
\newtheorem{proposition}[theorem]{Proposition}
\newtheorem{lemma}[theorem]{Lemma}

\begin{document}

\title{Quantum isometries of noncommutative polygonal spheres}

\author{Teodor Banica}
\address{T.B.: Department of Mathematics, Cergy-Pontoise University, 95000 Cergy-Pontoise, France. {\tt teodor.banica@u-cergy.fr}}

\subjclass[2000]{46L65 (46L54, 46L87)}
\keywords{Quantum isometry, Noncommutative sphere}

\begin{abstract}
The real sphere $S^{N-1}_\mathbb R$ appears as increasing union, over $d\in\{1,\ldots,N\}$, of its ``polygonal'' versions $S^{N-1,d-1}_\mathbb R=\{x\in S^{N-1}_\mathbb R|x_{i_0}\ldots x_{i_d}=0,\forall i_0,\ldots,i_d\ {\rm distinct}\}$. Motivated by general classification questions for the undeformed noncommutative spheres, smooth or not, we study here the quantum isometries of $S^{N-1,d-1}_\mathbb R$, and of its various noncommutative analogues, obtained via liberation and twisting. We discuss as well a complex version of these results, with $S^{N-1}_\mathbb R$ replaced by the complex sphere $S^{N-1}_\mathbb C$.
\end{abstract}

\maketitle

\section*{Introduction}

Goswami has shown in \cite{go1} that any noncommutative compact Riemannian manifold $X$ has a quantum isometry group $G^+(X)$. While the classical, connected manifolds cannot have genuine quantum isometries \cite{gjo}, the situation changes when looking at manifolds which are (1) disconnected, or (2) not smooth, or (3) not classical.

The fact that a disconnected manifold can have indeed quantum isometries is well-known, and goes back to Wang's paper \cite{wa2}, where a free analogue $S_N^+$ of the symmetric group $S_N$, acting on the $N$-point space $X_N=\{1,\ldots,N\}$, was constructed. For non-smooth (connected) manifolds this is a relatively new discovery, due to Huang \cite{hua}, the simplest example here being the action of $S_N^+$ on the union $Y_N=\bigcup_{i=1}^N[0,1]^{(i)}$ of the $N$ copies of the $[0,1]$-segment on the coordinate axes of $\mathbb R^N$. Finally, for the non-classical manifolds this is once again well-known, since \cite{go1}, a basic example here being the action of the free quantum group $O_N^+$ on the free real sphere $S^{N-1}_{\mathbb R,+}$, discussed in \cite{bgo}.

Generally speaking, understanding what exact geometric features of $X$ allow the existence of genuine quantum group actions is an open question. In view of the above results and examples, the answer probably involves a subtle mixture of non-connectedness, and non-smoothness, and non-commutativity, which remains yet to be determined.

The present paper is a continuation of \cite{ban}, \cite{bgo}, where we proposed the framework of ``undeformed noncommutative spheres'', and their submanifolds, as a reasonably general setting for investigating various quantum isometry phenomena. We will study here certain non-smooth versions of $S^{N-1}_\mathbb R$, and their various noncommutative analogues.

More precisely, we will be interested in the ``polygonal spheres'', and their noncommutative analogues appearing via liberation and twisting. The polygonal spheres are real algebraic manifolds, depending on integers $1\leq d\leq N$, defined as follows:
$$S^{N-1,d-1}_\mathbb R=\left\{x\in S^{N-1}_\mathbb R\Big|x_{i_0}\ldots x_{i_d}=0,\forall i_0,\ldots,i_d\ {\rm distinct}\right\}$$

This type of construction applies as well to the noncommutative versions of $S^{N-1}_\mathbb R$ constructed in \cite{ban}, \cite{bgo}. The cases $d=1,2$ are of particular interest, because we can recover in this way some key examples from \cite{ban}, originally dismissed there because of their non-smoothness. We have in fact 9 basic polygonal spheres, as follows:
$$\xymatrix@R=12mm@C=12mm{
S^{N-1}_\mathbb R\ar[r]&S^{N-1}_{\mathbb R,*}\ar[r]&S^{N-1}_{\mathbb R,+}\\
S^{N-1,1}_\mathbb R\ar[r]\ar[u]&S^{N-1,1}_{\mathbb R,*}\ar[r]\ar[u]&\bar{S}^{N-1}_{\mathbb R,*}\ar[u]\\
S^{N-1,0}_\mathbb R\ar[r]\ar[u]&\bar{S}^{N-1,1}_\mathbb R\ar[r]\ar[u]&\bar{S}^{N-1}_\mathbb R\ar[u]}$$

Here all the maps are inclusions. The 3 spheres on top are those in \cite{bgo}, the 3 spheres on the right are their twists, introduced in \cite{ban}, with the free sphere $S^{N-1}_{\mathbb R,+}$ being equal to its own twist, and the 4 spheres at bottom left appear as intersections.

We will first perform an axiomatic study of these 9 spheres, with some noncommutative algebraic geometry results, of diagrammatic type, extending those in \cite{ban}, \cite{bgo}. We will prove then that the corresponding quantum isometry groups are as follows:
$$\xymatrix@R=12mm@C=16mm{
O_N\ar[r]&O_N^*\ar[r]&O_N^+\\
H_N\ar[r]\ar[u]&H_N^{[\infty]}\ar[r]\ar[u]&\bar{O}_N^*\ar[u]\\
H_N^+\ar[r]\ar[u]&H_N\ar[r]\ar[u]&\bar{O}_N\ar[u]}$$

Here the 5 results on top and at right are known from \cite{ban}, \cite{bgo}. The 4 new results, at bottom left, concern the hyperoctahedral group $H_N$, and its versions $H_N^+,H_N^{[\infty]}$ from \cite{bbc}, \cite{bcs}. The proof uses methods from \cite{ban}, \cite{bgo}, \cite{bhg}, \cite{bdu}, \cite{rw2}, and some ad-hoc tricks.

We have as well a complex version of these results, concerning the 9 complex analogues of the above spheres and quantum groups, which once again extends some previous findings from \cite{ban}. We refer to the body of the paper for the precise statements of our results, and to the final section below for a list of questions raised by the present work.

The paper is organized as follows: in 1-2 we introduce the real polygonal spheres, in 3-4 we study their quantum isometries, and in 5-6 we state and prove our main results, we discuss the complex extensions, and we end with a few concluding remarks.

\medskip

\noindent {\bf Acknowledgements.} I would like to thank Jean-Marie Lescure for a useful discussion, and an anonymous referee for valuable suggestions. This work was partly supported by the NCN grant 2012/06/M/ST1/00169.

\section{Noncommutative spheres}

According to \cite{bgo}, the free analogue $S^{N-1}_{\mathbb R,+}$ of the real sphere $S^{N-1}_\mathbb R$ is the noncommutative real manifold whose coordinates $x_1,\ldots,x_N$ are subject to the condition $\sum_ix_i^2=1$. To be more precise, $S^{N-1}_{\mathbb R,+}$ is the abstract spectrum of the following universal $C^*$-algebra:
$$C(S^{N-1}_{\mathbb R,+})=C^*\left(x_1,\ldots,x_N\Big|x_i=x_i^*,x_1^2+\ldots+x_N^2=1\right)$$

We will be interested in what follows in various ``subspheres'' of $S^{N-1}_{\mathbb R,+}$. As explained in \cite{bgo}, besides $S^{N-1}_\mathbb R$, another fundamental example is the half-liberated sphere $S^{N-1}_{\mathbb R,*}$, which appears as an intermediate object, $S^{N-1}_\mathbb R\subset S^{N-1}_{\mathbb R,*}\subset S^{N-1}_{\mathbb R,+}$. Moreover, as explained in \cite{ban}, we have 2 more basic spheres obtained by twisting, $\bar{S}^{N-1}_\mathbb R\subset\bar{S}^{N-1}_{\mathbb R,*}\subset S^{N-1}_{\mathbb R,+}$.

Here is the precise definition of the 3 extra spheres:

\begin{definition}
The subspheres $\bar{S}^{N-1}_\mathbb R,S^{N-1}_{\mathbb R,*},\bar{S}^{N-1}_{\mathbb R,*}\subset S^{N-1}_{\mathbb R,+}$ are constructed by imposing the following conditions on the standard coordinates $x_1,\ldots,x_N$:
\begin{enumerate}
\item $\bar{S}^{N-1}_\mathbb R$: $x_ix_j=-x_jx_i$, for any $i\neq j$.

\item $S^{N-1}_{\mathbb R,*}$: $x_ix_jx_k=x_kx_jx_i$, for any $i,j,k$.

\item $\bar{S}^{N-1}_{\mathbb R,*}$: $x_ix_jx_k=-x_kx_jx_i$ for any $i,j,k$ distinct, $x_ix_jx_k=x_kx_jx_i$ otherwise.
\end{enumerate}
\end{definition}

The fact that we have indeed $\bar{S}^{N-1}_\mathbb R\subset\bar{S}^{N-1}_{\mathbb R,*}$ comes from $abc=-bac=bca=-cba$ for $a,b,c\in\{x_i\}$ distinct, and $aab=-aba=baa$ for $a,b\in\{x_i\}$ distinct, where $x_1,\ldots,x_N$ are the standard coordinates on $\bar{S}^{N-1}_\mathbb R$. In addition, it is known that the inclusions $S^{N-1}_\mathbb R\subset S^{N-1}_{\mathbb R,*}\subset S^{N-1}_{\mathbb R,+}$ and $\bar{S}^{N-1}_\mathbb R\subset\bar{S}^{N-1}_{\mathbb R,*}\subset S^{N-1}_{\mathbb R,+}$ are all proper at $N\geq 3$. See \cite{ban}.

As pointed out in \cite{ban}, when intersecting twisted and untwisted spheres, non-smooth manifolds can appear. More precisely, $S^{N-1}_\mathbb R\cap\bar{S}^{N-1}_{\mathbb R,*}$ consists by definition of the points $x\in S^{N-1}_\mathbb R$ having the property $x_ix_jx_k=0$ for any $i,j,k$ distinct, and is therefore a union of $\binom{N}{2}$ copies of the unit circle $\mathbb T$, which is not smooth. See \cite{ban}.

In what follows we will enlarge the formalism in \cite{ban}, as to cover as well these intersections, originally dismissed there, but which are quite interesting. First, we have:

\begin{proposition}
The $5$ main spheres, and the intersections between them, are
$$\xymatrix@R=12mm@C=12mm{
S^{N-1}_\mathbb R\ar[r]&S^{N-1}_{\mathbb R,*}\ar[r]&S^{N-1}_{\mathbb R,+}\\
S^{N-1,1}_\mathbb R\ar[r]\ar[u]&S^{N-1,1}_{\mathbb R,*}\ar[r]\ar[u]&\bar{S}^{N-1}_{\mathbb R,*}\ar[u]\\
S^{N-1,0}_\mathbb R\ar[r]\ar[u]&\bar{S}^{N-1,1}_\mathbb R\ar[r]\ar[u]&\bar{S}^{N-1}_\mathbb R\ar[u]}$$
where $\dot{S}^{N-1,d-1}_{\mathbb R,\times}\subset\dot{S}^{N-1}_{\mathbb R,\times}$ is obtained by assuming $x_{i_0}\ldots x_{i_d}=0$, for $i_0,\ldots,i_d$ distinct.
\end{proposition}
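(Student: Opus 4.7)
The plan is to identify each of the four ``polygonal'' spheres at bottom-left of the grid with a pairwise intersection of two of the five main spheres sitting on the top row and right column, and then to read off the claimed inclusions from the defining relations. Explicitly, I would aim for the identifications
\[
S^{N-1,0}_\mathbb R = S^{N-1}_\mathbb R\cap\bar S^{N-1}_\mathbb R,\quad S^{N-1,1}_\mathbb R = S^{N-1}_\mathbb R\cap\bar S^{N-1}_{\mathbb R,*},
\]
\[
\bar S^{N-1,1}_\mathbb R = S^{N-1}_{\mathbb R,*}\cap\bar S^{N-1}_\mathbb R,\quad S^{N-1,1}_{\mathbb R,*} = S^{N-1}_{\mathbb R,*}\cap\bar S^{N-1}_{\mathbb R,*}.
\]
Since each intersection of subspheres of $S^{N-1}_{\mathbb R,+}$ is by definition the quotient of $C(S^{N-1}_{\mathbb R,+})$ by the union of the two defining ideals, the whole matter reduces to direct manipulations with the words $x_{i_0}\cdots x_{i_d}$.

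For the first intersection, the clash between commutativity ($x_ix_j=x_jx_i$) and anticommutativity ($x_ix_j=-x_jx_i$ for $i\neq j$) immediately forces $x_ix_j=0$ for $i\neq j$, which is exactly the defining relation of $S^{N-1,0}_\mathbb R$. For the second and fourth, the same type of clash occurs at the cubic level: the relation $x_ix_jx_k=x_kx_jx_i$ (coming from commutativity, respectively from half-liberation) together with $x_ix_jx_k=-x_kx_jx_i$ for distinct $i,j,k$ (coming from the twisted half-liberation) forces $x_ix_jx_k=0$ for distinct indices, which is the polygonal relation defining $S^{N-1,1}_\mathbb R$, respectively $S^{N-1,1}_{\mathbb R,*}$.

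The case of $\bar S^{N-1,1}_\mathbb R = S^{N-1}_{\mathbb R,*}\cap\bar S^{N-1}_\mathbb R$ is the one that I expect to require a small additional argument and is, in my view, the only nontrivial point. On $\bar S^{N-1}_\mathbb R$ the anticommutation relation already implies $x_ix_jx_k=-x_kx_jx_i$ for distinct $i,j,k$, so intersecting with the half-commutation axiom again gives $x_ix_jx_k=0$ on distinct triples. One must then also check that the half-commutation relation $x_ix_jx_k=x_kx_jx_i$ on \emph{non}-distinct triples is automatic on $\bar S^{N-1}_\mathbb R$ modulo this vanishing; this follows from a short calculation showing that each $x_i^2$ is central in $C(\bar S^{N-1}_\mathbb R)$, which is itself a direct consequence of the anticommutation rule.

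Once all four intersections have been identified, the nine inclusions in the diagram are tautological: the vertical maps (bottom to top) correspond to dropping the polygonal relation $x_{i_0}\cdots x_{i_d}=0$, while the horizontal maps (left to right) correspond to weakening the commutativity assumption in the manner already recorded in Definition~1.1 and in \cite{ban}. Both types of maps are then the canonical surjections between the corresponding universal $C^*$-algebras, and the proposition follows.
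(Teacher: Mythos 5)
Your proposal is correct and follows essentially the same route as the paper: identify each of the four bottom-left spheres with the corresponding pairwise intersection among the five main spheres, and resolve each identification by comparing the sign constraints on words $x_{i_1}\cdots x_{i_k}$ coming from the two defining sets of relations. The one place where your write-up differs from the paper's is in the treatment of $\bar S^{N-1,1}_\mathbb R = S^{N-1}_{\mathbb R,*}\cap\bar S^{N-1}_\mathbb R$: you verify directly that $x_i^2$ is central in $C(\bar S^{N-1}_\mathbb R)$ in order to see that half-commutation on non-distinct triples is already satisfied, whereas the paper instead invokes the inclusion $\bar S^{N-1}_\mathbb R\subset\bar S^{N-1}_{\mathbb R,*}$ recorded just after Definition~1.1, which encodes exactly the same fact. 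Both observations are equivalent, and your version is arguably more self-contained.
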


\begin{proof}
We must prove that the 4-diagram obtained by intersecting the 5 main spheres coincides with the 4-diagram appearing at bottom left in the statement:
$$\xymatrix@R=13mm@C=13mm{
S^{N-1}_\mathbb R\cap\bar{S}^{N-1}_{\mathbb R,*}\ar[r]&S^{N-1}_{\mathbb R,*}\cap\bar{S}^{N-1}_{\mathbb R,*}\\
S^{N-1}_\mathbb R\cap\bar{S}^{N-1}_\mathbb R\ar[r]\ar[u]&S^{N-1}_{\mathbb R,*}\cap\bar{S}^{N-1}_\mathbb R\ar[u]}
\ \ \xymatrix@R=7mm@C=1mm{&\\=\\&}
\xymatrix@R=13mm@C=13mm{
S^{N-1,1}_\mathbb R\ar[r]&S^{N-1,1}_{\mathbb R,*}\\
S^{N-1,0}_\mathbb R\ar[r]\ar[u]&\bar{S}^{N-1,1}_\mathbb R\ar[u]}$$

But this is clear, because combining the commutation and anticommutation relations leads to the vanishing relations defining spheres of type $\dot{S}^{N-1,d-1}_{\mathbb R,\times}$. More precisely:

(1) $S^{N-1}_\mathbb R\cap\bar{S}^{N-1}_\mathbb R$ consists of the points $x\in S^{N-1}_\mathbb R$ satisfying $x_ix_j=-x_jx_i$ for $i\neq j$. Since $x_ix_j=x_jx_i$, this latter relation reads $x_ix_j=0$ for $i\neq j$, which means $x\in S^{N-1,0}_\mathbb R$.

(2) $S^{N-1}_\mathbb R\cap\bar{S}^{N-1}_{\mathbb R,*}$ consists of the points $x\in S^{N-1}_\mathbb R$ satisfying $x_ix_jx_k=-x_kx_jx_i$ for $i,j,k$ distinct. Once again by commutativity, this relation is equivalent to $x\in S^{N-1,1}_\mathbb R$.

(3) $S^{N-1}_{\mathbb R,*}\cap\bar{S}^{N-1}_\mathbb R$ is obtained from $\bar{S}^{N-1}_\mathbb R$ by imposing to the standard coordinates the half-commutation relations $abc=cba$. On the other hand, we know from $\bar{S}^{N-1}_\mathbb R\subset \bar{S}^{N-1}_{\mathbb R,*}$ that the standard coordinates on $\bar{S}^{N-1}_\mathbb R$ satisfy $abc=-cba$ for $a,b,c$ distinct, and $abc=cba$ otherwise. Thus, the relations brought by intersecting with $S^{N-1}_{\mathbb R,*}$ reduce to the relations $abc=0$ for $a,b,c$ distinct, and so we are led to the sphere $\bar{S}^{N-1,1}_\mathbb R$.

(4) $S^{N-1}_{\mathbb R,*}\cap\bar{S}^{N-1}_{\mathbb R,*}$ is obtained from $\bar{S}^{N-1}_{\mathbb R,*}$ by imposing the relations $abc=-cba$ for $a,b,c$ distinct, and $abc=cba$ otherwise. Since we know that $abc=cba$ for any $a,b,c$, the extra relations reduce to $abc=0$ for $a,b,c$ distinct, and so we are led to $S^{N-1,1}_{\mathbb R,*}$.
\end{proof}

Let us find now a suitable axiomatic framework for the 9 spheres in Proposition 1.2. We denote by $P(k,l)$ the set of partitons between an upper row of $k$ points, and a lower row of $l$ points, we set $P=\bigcup_{kl}P(k,l)$, and we denote by $P_{even}\subset P$ the subset of partitions having all the blocks of even size. Observe that $P_{even}(k,l)=\emptyset$ for $k+l$ odd.

We use the fact, from \cite{ban}, that there is a signature map $\varepsilon: P_{even}\to\{-1,1\}$, extending the usual signature of permutations, $\varepsilon:S_\infty\to\{-1,1\}$. This map is obtained by setting $\varepsilon(\pi)=(-1)^c$, where $c\in\mathbb N$ is the number of switches between neighbors required for making $\pi$ noncrossing, and which can be shown to be well-defined modulo 2.

We have the following definition, once again from \cite{ban}:

\begin{definition}
Given variables $x_1,\ldots,x_N$, any permutation $\sigma\in S_k$ produces two collections of relations between these variables, as follows:
\begin{enumerate}
\item Untwisted relations: $x_{i_1}\ldots x_{i_k}=x_{i_{\sigma(1)}}\ldots x_{i_{\sigma(k)}}$, for any $i_1,\ldots,i_k$.

\item Twisted relations: $x_{i_1}\ldots x_{i_k}=\varepsilon\left(\ker(^{\,\,\,i_1\ \,\ldots\ \,i_k}_{i_{\sigma(1)}\ldots i_{\sigma(k)}})\right)x_{i_{\sigma(1)}}\ldots x_{i_{\sigma(k)}}$, for any $i_1,\ldots,i_k$.
\end{enumerate}
The untwisted relations are denoted $\mathcal R_\sigma$, and the twisted ones are denoted $\bar{\mathcal R}_\sigma$.
\end{definition}

Observe that the relations $\mathcal R_\sigma$ are trivially satisfied for the standard coordinates on $S^{N-1}_\mathbb R$, for any $\sigma\in S_k$. A twisted analogue of this fact holds, in the sense that the standard coordinates on $\bar{S}^{N-1}_\mathbb R$ satisfy the relations $\bar{\mathcal R}_\sigma$, for any $\sigma\in S_k$. Indeed, by anticommutation we must have a formula of type $x_{i_1}\ldots x_{i_k}=\pm x_{i_{\sigma(1)}}\ldots x_{i_{\sigma(k)}}$, and the sign $\pm$ obtained in this way is precisely the one given above, $\pm=\varepsilon\left(\ker(^{\,\,\,i_1\ \,\ldots\ \,i_k}_{i_{\sigma(1)}\ldots i_{\sigma(k)}})\right)$. See \cite{ban}.

Finally, we agree as in \cite{ban} to distinguish the untwisted and twisted cases by using a dot symbol, which is null in the untwisted case, and is a bar in the twisted case.

We have now all the needed ingredients for axiomatizing the various spheres:

\begin{definition}
We have $3$ types of noncommutative spheres $S\subset S^{N-1}_{\mathbb R,+}$, as follows:
\begin{enumerate}
\item Monomial: $\dot{S}^{N-1}_{\mathbb R,E}$, with $E\subset S_\infty$, obtained via the relations $\{\dot{\mathcal R}_\sigma|\sigma\in E\}$.

\item Mixed monomial: $S^{N-1}_{\mathbb R,E,F}=S^{N-1}_{\mathbb R,E}\cap\bar{S}^{N-1}_{\mathbb R,F}$, with $E,F\subset S_\infty$.

\item Polygonal: $S^{N-1,d-1}_{\mathbb R,E,F}=S^{N-1}_{\mathbb R,E,F}\cap S^{N-1,d-1}_{\mathbb R,+}$, with $E,F\subset S_\infty$, and $d\in\{1,\ldots,N\}$.
\end{enumerate}
\end{definition}

Here the subsphere $S^{N-1,d-1}_{\mathbb R,+}\subset S^{N-1}_{\mathbb R,+}$ appearing in (3) is constructed as in Proposition 1.2 above, by imposing the relations $x_{i_0}\ldots x_{i_d}=0$, for $i_0,\ldots,i_d$ distinct.

With the above notions, we cover all spheres appearing so far. More precisely, the 5 spheres in \cite{ban} are monomial, the 9 spheres in Proposition 1.2 are mixed monomial, and the polygonal sphere formalism covers all the examples given so far in this paper.

Observe that the set of mixed monomial spheres is closed under intersections. The same holds for the set of polygonal spheres, because we have the following formula:
$$S^{N-1,d-1}_{\mathbb R,E,F}\cap S^{N-1,d'-1}_{\mathbb R,E',F'}=S^{N-1,min(d,d')-1}_{\mathbb R,E\cup E',F\cup F'}$$

Let us try now to understand the structure of the various types of noncommutative spheres. We call a group of permutations $G\subset S_\infty$ filtered if, with $G_k=G\cap S_k$, we have $G_k\times G_l\subset G_{k+l}$, for any $k,l$. We use the following simple fact, coming from \cite{ban}:

\begin{proposition}
The various spheres can be parametrized by groups, as follows:
\begin{enumerate}
\item Monomial case: $\dot{S}^{N-1}_{\mathbb R,G}$, with $G\subset S_\infty$ filtered group.

\item Mixed monomial case: $S^{N-1}_{\mathbb R,G,H}$, with $G,H\subset S_\infty$ filtered groups.

\item Polygonal case: $S^{N-1,d-1}_{\mathbb R,G,H}$, with $G,H\subset S_\infty$ filtered groups, and $d\in\{1,\ldots,N\}$.
\end{enumerate}
\end{proposition}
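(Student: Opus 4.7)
The plan is to reduce everything to the monomial case (1) and, there, to prove that the sphere $\dot S^{N-1}_{\mathbb R,E}$ depends only on the filtered subgroup of $S_\infty$ generated by $E$. Concretely, I would introduce
$$G(E) = \{\sigma\in S_\infty \mid \text{the standard coordinates of } \dot S^{N-1}_{\mathbb R,E} \text{ satisfy } \dot{\mathcal R}_\sigma\}$$
and verify that $G(E)$ is itself a filtered subgroup of $S_\infty$. Since $G(E)\supset E$ by construction, this will yield $\dot S^{N-1}_{\mathbb R,E}=\dot S^{N-1}_{\mathbb R,G(E)}$, so it suffices to set $G=G(E)$.

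Three properties of $G(E)$ need to be established: closure under composition, closure under inversion, and compatibility with the filtered product $S_k\times S_l\to S_{k+l}$. In the untwisted case all three are immediate: composition is the chaining of two index-renaming identities, inversion is the same identity read backwards, and the filtered product follows from applying $\mathcal R_\sigma$ to the first $k$ factors of a word of length $k+l$ and $\mathcal R_\tau$ to the last $l$. In the twisted case the same manipulations go through provided the signs combine correctly, which rests on the fact, proved in \cite{ban}, that $\varepsilon:P_{even}\to\{\pm 1\}$ behaves multiplicatively under both composition and horizontal concatenation of partitions.

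Parts (2) and (3) then reduce to (1) applied separately to each of the two sets $E,F\subset S_\infty$, together with the intersection formula for polygonal spheres stated just before this proposition. That formula confirms that the integer $d$ is unaffected when $E$ and $F$ are replaced by their generated filtered groups $G,H$, so the parametrizations $S^{N-1}_{\mathbb R,G,H}$ and $S^{N-1,d-1}_{\mathbb R,G,H}$ follow.

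The main difficulty I expect is the signature bookkeeping in the twisted version of the composition step: whenever $x_{i_1}\cdots x_{i_k}=\varepsilon(\pi)\,x_{i_{\sigma(1)}}\cdots x_{i_{\sigma(k)}}$ and an analogous identity holds for $\tau$ applied to the permuted indices, one must check that the two signs combine into exactly the sign predicted by $\bar{\mathcal R}_{\sigma\tau}$. I would handle this by comparing the relevant kernel partitions and invoking the multiplicativity of $\varepsilon$ established in \cite{ban}; the filtered-product closure in the twisted case requires the same input, applied now to a horizontal juxtaposition rather than a vertical composition of partitions.
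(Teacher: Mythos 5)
Your proposal matches the paper's proof: you define $G(E)$ as the set of all $\sigma\in S_\infty$ whose relations $\dot{\mathcal R}_\sigma$ hold over $\dot S^{N-1}_{\mathbb R,E}$, note $E\subset G(E)$ and $\dot S^{N-1}_{\mathbb R,E}=\dot S^{N-1}_{\mathbb R,G(E)}$, argue that $G(E)$ is a filtered group, and then obtain (2) and (3) by intersecting. This is exactly the route taken in the paper (which, for the filtered-group verification, defers the details to \cite{ban}, where you instead spell out the closure and sign-multiplicativity checks); there is no substantive difference in strategy.
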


\begin{proof}
As explained in \cite{ban}, in order to prove (1) for a monomial sphere $S=\dot{S}_{\mathbb R,E}$, we can take $G\subset S_\infty$ to be the set of permutations $\sigma\in S_\infty$ having the property that the relations $\dot{\mathcal R}_\sigma$ hold for the standard coordinates of $S$. We have then $E\subset G$, we have as well $S=\dot{S}^{N-1}_{\mathbb R,G}$, and the fact that $G$ is a filtered group is clear as well. See \cite{ban}.

Regarding now (2) and (3), these follow from (1), by taking intersections.
\end{proof}

Let us write now the 9 main polygonal spheres as in Proposition 1.5 (2). We recall from \cite{ban} that the permutations $\sigma\in S_\infty$ having the property that when labelling clockwise their legs $\circ\bullet\circ\bullet\ldots$, and string joins a white leg to a black leg, form a filtered group, denoted $S_\infty^*\subset S_\infty$. This group comes from the half-liberation considerations in \cite{bve}, and its structure is very simple, $S_{2n}^*\simeq S_n\times S_n,S_{2n+1}^*\simeq S_n\times S_{n+1}$. See \cite{ban}.

We call a mixed monomial sphere parametrization $S=S^{N-1}_{\mathbb R,G,H}$ standard when both filtered groups $G,H\subset S_\infty$ are chosen to be maximal. In this case, Proposition 1.5 and its proof tell us that $G,H$ encode all the monomial relations which hold in $S$.

We have the following result, extending some previous findings from \cite{ban}:

\begin{theorem}
The standard parametrization of the $9$ main spheres is
$$\xymatrix@R=10mm@C=10mm{
S_\infty\ar@{.}[d]&S_\infty^*\ar@{.}[d]&\{1\}\ar@{.}[d]&G/H\\
S^{N-1}_\mathbb R\ar[r]&S^{N-1}_{\mathbb R,*}\ar[r]&S^{N-1}_{\mathbb R,+}&\{1\}\ar@{.}[l]\\
S^{N-1,1}_\mathbb R\ar[r]\ar[u]&S^{N-1,1}_{\mathbb R,*}\ar[r]\ar[u]&\bar{S}^{N-1}_{\mathbb R,*}\ar[u]&S_\infty^*\ar@{.}[l]\\
S^{N-1,0}_\mathbb R\ar[r]\ar[u]&\bar{S}^{N-1,1}_\mathbb R\ar[r]\ar[u]&\bar{S}^{N-1}_\mathbb R\ar[u]&S_\infty\ar@{.}[l]}$$
where $S_\infty^*\subset S_\infty$ is given by $S_{2n}^*\simeq S_n\times S_n,S_{2n+1}^*\simeq S_n\times S_{n+1}$.
\end{theorem}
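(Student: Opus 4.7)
\medskip
\noindent\textbf{Proof plan.}
The plan is to combine the known parametrizations of the five ``corner'' spheres from \cite{ban} with the intersection structure recorded in Proposition 1.2, and then to check the maximality of the parameters on the four remaining polygonal spheres at the bottom left.

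First I would recall from \cite{ban} the standard parametrizations of the five non-polygonal spheres: $(G,H)=(S_\infty,\{1\})$ for $S^{N-1}_\mathbb R$, $(S_\infty^*,\{1\})$ for $S^{N-1}_{\mathbb R,*}$, $(\{1\},\{1\})$ for $S^{N-1}_{\mathbb R,+}$, $(\{1\},S_\infty^*)$ for $\bar S^{N-1}_{\mathbb R,*}$ and $(\{1\},S_\infty)$ for $\bar S^{N-1}_\mathbb R$. This settles the top row and the right column of the diagram, and fixes once and for all the desired column-labels (for $G$) and row-labels (for $H$) of the full $3\times 3$ array.

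For each polygonal sphere at the bottom left I would use Proposition 1.2 to write it as the intersection of a top-row sphere with a right-column sphere, and combine this with the intersection formula for polygonal mixed monomial spheres stated in the text. Since any monomial relation valid on a larger sphere remains valid on a subsphere, this automatically produces the inclusions $G\supset G^{\mathrm{top}}$ and $H\supset H^{\mathrm{right}}$ for the standard filtered groups of the four spheres, matching in each case the claimed labels.

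The only nontrivial point is the reverse inclusion: that intersecting does not introduce any new monomial relation. To verify this I would test each candidate $\sigma$ lying strictly outside the claimed $G$ (resp.\ $H$) against a suitable model of the polygonal sphere supported on one or two coordinates — a classical commutative model for the $G$ direction and a Clifford-type twisted model for the $H$ direction. Since the polygonal vanishing $x_{i_0}\cdots x_{i_d}=0$ is itself invariant under arbitrary permutations of $i_0,\ldots,i_d$, these tests reduce by filteredness to the non-polygonal verification already carried out in \cite{ban}: any $\sigma\notin S_\infty^*$ forces through the signature formula an anticommutation between two specific coordinates which the polygonal sphere cannot satisfy without collapsing to $S^{N-1,0}_\mathbb R$ at $N\geq 3$, and an analogous specialization handles the $G$ direction. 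The hard part will be arranging the specializations so that the polygonal constraints do not accidentally trivialize the test monomials; this is what the support condition on at most two coordinates ensures in the relevant range $d\leq 2$.
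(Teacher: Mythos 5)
Your overall structure matches the paper's: start from the known parametrizations of the five non-polygonal spheres, obtain the easy inclusions $G\supset G^{\mathrm{top}}$, $H\supset H^{\mathrm{right}}$ from sphere inclusions, and establish maximality by specialization. However, the crucial maximality step is both vaguely described and, in one place, essentially wrong, and this is the part of the proof that actually carries the content.

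The claim that the tests ``reduce by filteredness to the non-polygonal verification already carried out in \cite{ban}'' cannot be right as stated. For $S^{N-1}_\mathbb R$ the group $H$ equals $\{1\}$, while for the subsphere $S^{N-1,1}_\mathbb R$ one has $H=S_\infty^*$: the polygonal constraint strictly \emph{enlarges} $H$, so the verification is genuinely different, not a reduction. The mechanism is that relations with $|\ker(i_1,\ldots,i_k)|\geq 3$ become vacuous on $S^{N-1,1}_\mathbb R$ because both monomials vanish, so only the $|\ker i|\leq 2$ constraints survive. This is exactly the step that turns ``$\varepsilon(\tau)=1$ for all $\tau\leq\sigma$'' (which forces $\sigma=1$, the \cite{ban} computation) into ``$\varepsilon(\tau)=1$ for all $\tau\leq\sigma$ with $|\tau|=2$'' (which characterizes $S_\infty^*$ via the alternating coloring of legs). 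Your two-coordinate specialization is indeed the right tool for isolating the $|\ker i|=2$ case, and this is morally what the paper does, but you must spell out both that the higher-kernel relations trivialize and that the surviving sign constraint is equivalent to membership in $S_\infty^*$; neither of these is in \cite{ban}. Separately, your assignment of models is backwards: the nontrivial $G$-computation is for the anticommutative sphere $\bar S^{N-1,1}_\mathbb R$ (so the model must be the twisted circle $\bar S^1_\mathbb R$, e.g.\ a Clifford model), while the nontrivial $H$-computation is for the classical sphere $S^{N-1,1}_\mathbb R$ (so the model is the ordinary circle $S^1_\mathbb R$). As written, ``a classical commutative model for the $G$ direction and a Clifford-type twisted model for the $H$ direction'' would test the wrong algebras.
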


\begin{proof}
The fact that we have parametrizations as in the statement is known to hold for the 5 main spheres from \cite{ban}, as explained there. For the remaining 4 spheres the result follows by intersecting, by using the following formula, valid for any $E,F\subset S_\infty$:
$$S^{N-1}_{\mathbb R,E,F}\cap S^{N-1}_{\mathbb R,E',F'}=S^{N-1}_{\mathbb R,E\cup E',F\cup F'}$$

In order to prove that the parametrizations are standard, we must compute the following two filtered groups, and show that we get the groups in the statement:
$$G=\{\sigma\in S_\infty|{\rm the\ relations\ }\mathcal R_\sigma\ {\rm hold\ over\ }X\}$$ 
$$H=\{\sigma\in S_\infty|{\rm the\ relations\ }\bar{\mathcal R}_\sigma\ {\rm hold\ over\ }X\}$$ 

As a first observation, by using the various inclusions between spheres, we just have to compute $G$ for the spheres on the bottom, and $H$ for the spheres on the left:
$$X=S^{N-1,0}_\mathbb R,\bar{S}^{N-1,1}_\mathbb R,\bar{S}^{N-1}_\mathbb R\implies G=S_\infty,S_\infty^*,\{1\}$$
$$X=S^{N-1,0}_\mathbb R,S^{N-1,1}_\mathbb R,S^{N-1}_\mathbb R\implies H=S_\infty,S_\infty^*,\{1\}$$

The results for $S^{N-1,0}_\mathbb R$ being clear, we are left with computing the remaining 4 groups, for the spheres $S^{N-1}_\mathbb R,\bar{S}^{N-1}_\mathbb R,S^{N-1,1}_\mathbb R,\bar{S}^{N-1,1}_\mathbb R$. The proof here goes as follows:

(1) $S^{N-1}_\mathbb R$. According to the definition of $H=(H_k)$, we have:
\begin{eqnarray*}
H_k
&=&\left\{\sigma\in S_k\Big|x_{i_1}\ldots x_{i_k}=\varepsilon\left(\ker(^{\,\,\,i_1\ \,\ldots\ \,i_k}_{i_{\sigma(1)}\ldots i_{\sigma(k)}})\right)x_{i_{\sigma(1)}}\ldots x_{i_{\sigma(k)}},\forall i_1,\ldots,i_k\right\}\\
&=&\left\{\sigma\in S_k\Big|\varepsilon\left(\ker(^{\,\,\,i_1\ \,\ldots\ \,i_k}_{i_{\sigma(1)}\ldots i_{\sigma(k)}})\right)=1,\forall i_1,\ldots,i_k\right\}\\
&=&\left\{\sigma\in S_k\Big|\varepsilon(\tau)=1,\forall\tau\leq\sigma\right\}\end{eqnarray*}

Now since for any $\sigma\in S_k,\sigma\neq1_k$, we can always find a partition $\tau\leq\sigma$ satisfying $\varepsilon(\tau)=-1$, we deduce that we have $H_k=\{1_k\}$, and so $H=\{1\}$, as desired.

(2) $\bar{S}^{N-1}_\mathbb R$. The proof of $G=\{1\}$ here is similar to the proof of $H=\{1\}$ in (1) above, by using the same combinatorial ingredient at the end.

(3) $S^{N-1,1}_\mathbb R$. By definition of $H=(H_k)$, a permutation $\sigma\in S_k$ belongs to $H_k$ when the following condition is satisfied, for any choice of the indices $i_1,\ldots,i_k$:
$$x_{i_1}\ldots x_{i_k}=\varepsilon\left(\ker(^{\,\,\,i_1\ \,\ldots\ \,i_k}_{i_{\sigma(1)}\ldots i_{\sigma(k)}})\right)x_{i_{\sigma(1)}}\ldots x_{i_{\sigma(k)}}$$

When $|\ker i|=1$ this formula reads $x_r^k=x_r^k$, which is true. When $|\ker i|\geq3$ this formula is automatically satisfied as well, because by using the relations $ab=ba$, and $abc=0$ for $a,b,c$ distinct, which both hold over $S^{N-1,1}_\mathbb R$, this formula reduces to $0=0$. Thus, we are left with studying the case $|\ker i|=2$. Here the quantities on the left $x_{i_1}\ldots x_{i_k}$ will not vanish, so the sign on the right must be 1, and we therefore have:
$$H_k=\left\{\sigma\in S_k\Big|\varepsilon(\tau)=1,\forall\tau\leq\sigma,|\tau|=2\right\}$$

Now by coloring the legs of $\sigma$ clockwise $\circ\bullet\circ\bullet\ldots$, the above condition is satisfied when each string of $\sigma$ joins a white leg to a black leg. Thus $H_k=S_k^*$, as desired.

(4) $\bar{S}^{N-1,1}_\mathbb R$. The proof of $G=S_\infty^*$ here is similar to the proof of $H=S_\infty^*$ in (3) above, by using the same combinatorial ingredient at the end.
\end{proof}

As a conclusion, the $5+4=9$ spheres from Proposition 1.2 come from the $3\times 3$ ways of selecting a pair of filtered groups $(G,H)$, among the basic filtered groups $\{1\},S_\infty^*,S_\infty$. This result, improving some previous findings from \cite{ban}, is the best one that we have.

\section{Uniqueness results}

In this section we discuss a number of conjectures, whose validity would improve the formalism in Theorem 1.6. These conjectures are all equivalent, as follows:

\begin{proposition}
The following are equivalent:
\begin{enumerate}
\item The $3$ spheres in \cite{bgo} are the only untwisted monomial ones.

\item The $5$ spheres in \cite{ban} are the only monomial ones.

\item The $9$ spheres in Theorem 1.6 are the only mixed monomial ones.
\end{enumerate}
\end{proposition}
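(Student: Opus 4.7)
The plan is to establish the cycle $(3) \Rightarrow (2) \Rightarrow (1) \Rightarrow (3)$, with essentially all the content lying in the last implication.

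The implications $(3) \Rightarrow (2)$ and $(2) \Rightarrow (1)$ are immediate. An untwisted monomial sphere $S^{N-1}_{\mathbb{R}, E}$ coincides with the mixed monomial $S^{N-1}_{\mathbb{R}, E, \{1\}}$, and a twisted monomial sphere $\bar{S}^{N-1}_{\mathbb{R}, F}$ coincides with $S^{N-1}_{\mathbb{R}, \{1\}, F}$, so the class of monomial spheres sits inside the class of mixed monomial ones. Inspecting the diagram of Theorem 1.6, the top row (where $H=\{1\}$) yields the three untwisted monomial spheres, and the right column (where $G=\{1\}$) yields the three twisted ones, with $S^{N-1}_{\mathbb{R},+}$ shared. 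If (3) holds, the monomial spheres therefore number exactly $3+3-1=5$, and restricting to the top row gives the $3$ untwisted ones, producing (2) and then (1).

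For the main direction $(1) \Rightarrow (3)$, I would proceed in two steps. The first step is to transfer the untwisted classification to a twisted one, showing that (1) forces the only twisted monomial spheres to be $\bar{S}^{N-1}_\mathbb{R}, \bar{S}^{N-1}_{\mathbb{R},*}, S^{N-1}_{\mathbb{R},+}$. By Proposition 1.5, both untwisted and twisted monomial spheres are parametrized by filtered groups $G \subset S_\infty$, and the twisting construction of \cite{ban} supplies a canonical correspondence $S^{N-1}_{\mathbb{R}, G} \leftrightarrow \bar{S}^{N-1}_{\mathbb{R}, G}$ which preserves this parametrization. Hence the set of filtered groups realizable as standard parameters is the same in both cases, and so (1) propagates to its twisted analogue.

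The second step is a routine assembly. By Definition 1.4(2) any mixed monomial sphere has the form $S^{N-1}_{\mathbb{R}, E} \cap \bar{S}^{N-1}_{\mathbb{R}, F}$, and by Proposition 1.5(2) it admits a standard parametrization $S^{N-1}_{\mathbb{R}, G, H}$ with $G, H$ filtered. The factors $S^{N-1}_{\mathbb{R}, G}$ and $\bar{S}^{N-1}_{\mathbb{R}, H}$ are themselves untwisted and twisted monomial spheres, so each of $G, H$ is forced into the list $\{\{1\}, S_\infty^*, S_\infty\}$. This yields at most $3 \times 3 = 9$ mixed monomial spheres, and the intersection formulas worked out in Proposition 1.2 identify them with exactly the 9 spheres appearing in Theorem 1.6.

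The main obstacle is the first step of $(1) \Rightarrow (3)$, namely the transfer of the classification between untwisted and twisted monomial parametrizations. This relies on the twisting machinery of \cite{ban} being compatible with the filtered-group description of monomial relations; once that compatibility is in hand, the remainder of the argument is straightforward bookkeeping with intersections.
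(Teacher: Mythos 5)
Your proposal is correct and follows essentially the same route as the paper, whose entire proof is a single sentence asserting that $(1)\Rightarrow(2)\Rightarrow(3)$ follows ``by intersecting'' and $(3)\Rightarrow(2)\Rightarrow(1)$ ``by restricting.'' Your $(3)\Rightarrow(2)\Rightarrow(1)$ is the paper's restriction argument spelled out, and your $(1)\Rightarrow(3)$, split into the twisting transfer (Step 1) and the intersection assembly (Step 2), is exactly the paper's implicit $(1)\Rightarrow(2)$ and $(2)\Rightarrow(3)$, which you merge into a single jump to close the cycle rather than arguing a second chain. The one place you are filling genuine content rather than rephrasing is Step 1: the paper does not spell out why the classification of untwisted-saturated filtered groups transfers to the twisted side, and you correctly flag this as the nontrivial point; it rests on the signature map $\varepsilon$ giving a formal bijection between the relations $\mathcal R_\sigma$ and $\bar{\mathcal R}_\sigma$ compatible with the saturation $G\to\widetilde G$, which the paper takes for granted throughout Sections~1--2. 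You could also note, in the restriction direction, that an untwisted monomial sphere always contains $S^{N-1}_\mathbb R$ (the relations $\mathcal R_\sigma$ hold there trivially), which immediately rules it out from being any of the six bottom-left polygonal spheres and forces it into the top row; the analogous containment of $\bar S^{N-1}_\mathbb R$ handles the twisted case. This makes ``restricting to the top row'' fully rigorous rather than merely plausible.
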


\begin{proof}
These equivalences are all clear, with $(1)\implies(2)\implies(3)$ being obtained by intersecting, and with $(3)\implies(2)\implies(1)$ being obtained by restricting.
\end{proof}

These conjectures belong a priori to operator theory/algebras, and more specifically to a branch that could be called ``noncommutative algebraic geometry, with positivity'', that we are trying to develop in this paper. Our claim here would be that there might be a purely combinatorial way of solving them. We have the following definition:

\begin{definition}
Consider a filtered group of permutations, that is, a group $G\subset S_\infty$, $G=(G_k)$, satisfying $G_k\times G_l\subset G_{k+l}$ for any $k,l$. We call this group:
\begin{enumerate}
\item Saturated, if $G$ consists of all the permutations $\sigma\in S_k$ such that the relations $x_{i_1}\ldots x_{i_k}=x_{i_{\sigma(1)}}\ldots x_{i_{\sigma(k)}}$ hold over $S^{N-1}_{\mathbb R,G}$, for any $i_1,\ldots,i_k$.

\item Weakly saturated, if whenever $\sigma\in G_k$ satisfies $\sigma(i+1)=\sigma(i)\pm 1$, the permutation $\sigma^{(i,i+1)}\in S_{k-2}$ obtained by deleting $i,i+1$ and their images belongs to $G_{k-2}$.
\end{enumerate}
\end{definition}

It follows from Proposition 1.5 that we have a saturation operation $G\to\widetilde{G}$ for the filtered groups, which can be obtained by setting $S^{N-1}_{\mathbb R,G}=S^{N-1}_{\mathbb R,\widetilde{G}}$, with $\widetilde{G}\subset S_\infty$ chosen maximal. With this remark in hand, the conjecture in Proposition 2.1 (1) above simply states that there are exactly 3 saturated groups, namely $\{1\},S_\infty^*,S_\infty$. Observe that these 3 groups are indeed saturated, as a consequence of Theorem 1.6 above.

Regarding now the weak saturation, once again this produces an operation $G\to\bar{G}$ for the filtered groups. Indeed, given $G\subset S_\infty$ we can add to it all the permutations $\sigma^{(i,i+1)}$ appearing in Definition 2.2 (2), then consider the filtered group generated by $G$ and by these extra permutations, and then repeat the procedure, a finite or possibly countable number of times, until we obtain a weakly saturated group $\bar{G}$.

The interest in the above notions comes from:

\begin{proposition}
Any saturated group is weakly saturated. In particular, if the only weakly saturated groups are $\{1\},S_\infty^*,S_\infty$, then the conjectures in Proposition 2.1 hold.
\end{proposition}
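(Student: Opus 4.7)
The plan is to prove the first claim by the standard padding argument based on the identity $\sum_a x_a^2 = 1$, and then derive the \emph{in particular} clause by combining Proposition 2.1 with Proposition 1.5 and Theorem 1.6. For the first claim, take a saturated $G$ and a permutation $\sigma \in G_k$ with $\sigma(i+1) = \sigma(i) \pm 1$, and write $\{j, j+1\} = \{\sigma(i), \sigma(i+1)\}$. Saturation of $G$ reduces the goal $\sigma^{(i,i+1)} \in G_{k-2}$ to verifying the monomial relation $\mathcal{R}_{\sigma^{(i,i+1)}}$ for the standard coordinates of $S^{N-1}_{\mathbb{R},G}$.

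Given any tuple $(n_1,\ldots,n_{k-2})$, I form a padded length-$k$ tuple $(m_1,\ldots,m_k)$ by inserting a common dummy index $a$ at positions $i$ and $i+1$ and placing $n_1,\ldots,n_{k-2}$ at the remaining positions in order. Applying $\mathcal{R}_\sigma$ to $(m_1,\ldots,m_k)$ and summing over $a \in \{1,\ldots,N\}$ is the key move. On the left side the two factors $x_a$ sit at positions $i,i+1$, so $\sum_a x_a^2 = 1$ collapses them immediately and leaves $x_{n_1}\cdots x_{n_{k-2}}$. Reading the same relation in the equivalent form $\mathcal{R}_{\sigma^{-1}}$ (both define the same set of monomial relations after relabeling the universally quantified indices), on the right side the two $x_a$'s sit at positions $\sigma(i),\sigma(i+1)=j,j+1$, adjacent by hypothesis and hence collapsing as well; the surviving factors match $x_{n_{\sigma^{(i,i+1)}(1)}}\cdots x_{n_{\sigma^{(i,i+1)}(k-2)}}$ after routine order-preserving reindexing. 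This establishes $\mathcal{R}_{\sigma^{(i,i+1)}}$, and saturation of $G$ yields $\sigma^{(i,i+1)} \in G_{k-2}$.

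The only non-mechanical point is the appeal to the $\sigma \leftrightarrow \sigma^{-1}$ symmetry of monomial relations: padding at upper positions $i, i+1$ and applying $\mathcal{R}_\sigma$ naively would require adjacency of $\sigma^{-1}(i), \sigma^{-1}(i+1)$, which is not what weak saturation provides; the flip to $\mathcal{R}_{\sigma^{-1}}$ is what transfers the adjacency condition from source to image, matching the definition. Everything else is bookkeeping between the original positions $\{1,\ldots,k\} \setminus \{i,i+1\}$ and $\{1,\ldots,k\} \setminus \{j,j+1\}$ and their order-preserving identifications with $\{1,\ldots,k-2\}$.

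For the \emph{in particular} clause, by Proposition 2.1 it suffices to prove assertion (1) there. Every untwisted monomial sphere is of the form $S^{N-1}_{\mathbb{R},G}$ for some filtered group $G$, by Proposition 1.5, and passing to the saturation $\widetilde{G}$ preserves the sphere. The first part of the present proposition shows $\widetilde{G}$ is weakly saturated, so the standing hypothesis forces $\widetilde{G} \in \{\{1\}, S_\infty^*, S_\infty\}$, and by Theorem 1.6 the corresponding spheres are precisely the three from \cite{bgo}.
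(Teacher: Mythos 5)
Your proof is correct and is essentially the paper's argument: both contract a padded $\mathcal{R}$-relation using $\sum_a x_a^2=1$ to produce the monomial relation for the reduced permutation, then invoke saturation, and your treatment of the ``in particular'' clause is just a more explicit version of the paper's one-liner. The only cosmetic difference is that the paper pads the tuple at the codomain positions $\sigma(i),\sigma(i+1)$ and applies $\mathcal{R}_\sigma$ directly, which sidesteps your flip to $\mathcal{R}_{\sigma^{-1}}$ and lands exactly on $\mathcal{R}_{\sigma^{(i,i+1)}}$, whereas your reindexing actually yields $\mathcal{R}_{(\sigma^{(i,i+1)})^{-1}}$ rather than $\mathcal{R}_{\sigma^{(i,i+1)}}$ as written --- harmless, since $G$ is a group and $\mathcal{R}_\tau$, $\mathcal{R}_{\tau^{-1}}$ encode the same set of relations.
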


\begin{proof}
Consider a saturated group $G\subset S_\infty$, and let $S=S^{N-1}_{\mathbb R,G}$ be the corresponding sphere. We must show that if $\sigma\in G_k$ satisfies $\sigma(i+1)=\sigma(i)\pm 1$, then $\sigma^{(i,i+1)}\in G_{k-2}$.

We know from $\sigma\in G_k$ that the relations $x_{i_1}\ldots x_{i_k}=x_{i_{\sigma(1)}}\ldots x_{i_{\sigma(k)}}$ hold over $S$. In the case $\sigma(i+1)=\sigma(i)+1$ these relations are of type $XabY=ZabT$, and by setting $a=b^*$ and summing over $a$ we obtain $XY=ZT$. But these are exactly the relations associated to the permutation $\sigma^{(i,i+1)}\in S_{k-2}$, and we deduce that we have $\sigma^{(i,i+1)}\in G_{k-2}$.

In the case $\sigma(i+1)=\sigma(i)-1$ the proof is similar. Indeed, the relations associated to $\sigma$ are now of type $XabY=ZbaT$, and once again by setting $a=b^*$, and by summing over $a$, we obtain $XY=ZT$, and we conclude that we have $\sigma^{(i,i+1)}\in G_{k-2}$.

Finally, the last assertion is clear from the above considerations.
\end{proof}

We have the following result, of interest in connection with Proposition 2.1:

\begin{proposition}
If a filtered group $G\subset S_\infty$, $G=(G_k)$ is weakly saturated and $|G_5|>1$, then $G$ must be one of the groups $S_\infty,S_\infty^*$.
\end{proposition}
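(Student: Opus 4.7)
The plan is to split on the value of $G_2\subset S_2$. If $G_2=S_2$, then $(12)\in G_2$, and the filtered property gives $(i,i+1)=1_{i-1}\oplus(12)\oplus 1_{k-i-1}\in G_k$ for every $1\leq i<k$; since adjacent transpositions generate $S_k$, we obtain $G_k=S_k$ for all $k$, hence $G=S_\infty$. The substantive case is $G_2=\{1\}$, where the goal is to show $G=S_\infty^*$.

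Assuming $G_2=\{1\}$, I would first pin down $G_3$ by ruling out every nontrivial subgroup of $S_3$ except $S_3^*=\{1,(13)\}$. If $(12)\in G_3$, then $(12)\oplus 1_1=(2,1,3,4)\in G_4$ and weak saturation at positions $(3,4)$ yields $(12)\in G_2$, contradicting $G_2=\{1\}$; if $(23)\in G_3$, then $1_1\oplus(23)=(1,2,4,3)\in G_4$ and weak saturation at $(1,2)$ gives $(12)\in G_2$; if $A_3\subset G_3$, then $(123),(234)\in G_4$ have product $(12)(34)\in G_4$, whose weak saturation at $(1,2)$ produces $(12)\in G_2$. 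So $G_3\in\{\{1\},S_3^*\}$.

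To rule out $G_3=\{1\}$ under $|G_5|>1$, I would pick a nontrivial $\sigma\in G_5$ and run a case analysis on its cycle type. Most cycle types admit an adjacent pair of consecutive values whose weak saturation is a nontrivial element of $G_3$. The exceptional elements---those whose direct weak saturations are all trivial, such as the double transposition $(13)(24)\in G_5$---are handled by first filtering into a larger $G_{5+k}$ via identity padding and/or shifted copies of the same element, then applying weak saturation to the richer product. For instance, $(13)(24)\oplus 1_1$ and $1_1\oplus(13)(24)$ both lie in $G_6$, and their composition is the $3$-cycle $(135)=(3,2,5,4,1,6)\in G_6$; weak saturation at $(1,2)$ then produces $(13)\in G_4\subset G_3$. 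This kind of filtered-product maneuver disposes of the remaining configurations, forcing $G_3=S_3^*$.

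Finally, filtering the generator $(13)\in G_3$ across shifts yields $(i,i+2)\in G_k$ for $1\leq i\leq k-2$, and these transpositions generate exactly $S_k^*\simeq S_{\lceil k/2\rceil}\times S_{\lfloor k/2\rfloor}$, so $G\supset S_\infty^*$. The opposite inclusion $G\subset S_\infty^*$ is where I expect the main technical obstacle: supposing a parity-breaking $\tau\in G_k$ existed, I would pick $k$ minimal and use weak saturation---after pre- and post-composing with elements of the already secured $S_k^*\subset G_k$ to expose a consecutive-value pair $(\tau(i),\tau(i+1))$ with $|\tau(i+1)-\tau(i)|=1$---to extract a parity-breaking element of $G_3$, contradicting $G_3=S_3^*$. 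The combinatorial bookkeeping that tracks how parity survives the weak-saturation deletion-and-relabel is the hardest part of the argument; the preceding steps are by comparison mechanical cycle-type case analyses.
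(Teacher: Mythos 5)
Your strategy --- pin down $G_2$ and $G_3$ by iterated weak saturation and filtering, then propagate upward --- is in the same spirit as the paper's, which instead establishes one global claim for $\sigma\in S_k$, $k\le 5$ (existence of $\tau\in\langle 1\otimes\sigma,\sigma\otimes1\rangle$ with an adjacent consecutive pair) and iterates it; your bottom-up organization of the case analysis is a reasonable reshuffling. There are, however, two real gaps. First, in the $(13)(24)\in G_5$ case: the weak saturation of $(135)=(3,2,5,4,1,6)$ at $(1,2)$ deletes positions $1,2$ and their images $3,2$ and yields $(3,2,1,4)\in G_4$, which is $(13)\oplus 1_1$, not an element of $G_3$ --- the expression ``$(13)\in G_4\subset G_3$'' is not meaningful --- and every further weak saturation of $(3,2,1,4)$ returns $1_2$, so this chain stalls in $G_4$ without establishing $G_3\ne\{1\}$. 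Some further filtered-product maneuver is needed here and is not supplied.

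Second, the inclusion $G\subset S_\infty^*$, which you correctly identify as the crux, is where the argument genuinely does not close. Weak saturation need not preserve the parity-breaking property: if $\tau(i+1)=\tau(i)\pm1$ and the two deleted strings carry the only parity violations of $\tau$ (as with $\tau=(2,1,3)$, where $\tau^{(1,2)}=1_1$), then $\tau^{(i,i+1)}$ is parity-preserving or trivial. Pre/post-composing with $S_k^*\subset G_k$ keeps $\tau$ parity-breaking, but nothing in your sketch ensures that the exposed consecutive pair can be chosen so that its removal still leaves a parity violation in place; that is precisely what has to be engineered, not merely tracked. This upper bound is also the terse point of the paper's own proof, which asserts that $G$ ``must be generated by'' $(12)$ or $(321)$ without arguing the containment; your write-up has the merit of isolating the difficulty explicitly, but it does not resolve it.
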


\begin{proof}
Our claim, which will basically prove the result, is that at $k\leq5$ we have:
$$\sigma\in S_k\implies\exists\tau\in<1\otimes\sigma,\sigma\otimes 1>\subset S_{k+1},\exists i,\tau(i+1)=\tau(i)\pm1$$

We have no conceptual proof for this claim, so we will first discuss the cases $k=3,4$, following some previous work in \cite{ban}, and then we will discuss the case $k=5$:

\underline{Case $k=3$.} Here we just have to investigate the 3-cycles, and by symmetry we can restrict attention to the cycle $\sigma=(231)$. As explained in \cite{ban}, a standard $C^*$-algebra trick shows that the corresponding sphere collapses to $S^{N-1}_\mathbb R$. The point now is that this trick can be converted into a proof of the above claim. More precisely, we have $(1\otimes\sigma)(\sigma\otimes1)=(2143)$, which satisfies the requirements for $\tau$ in the above claim.

\underline{Case $k=4$.} Here, as explained in \cite{ban}, for 22 of the 24 permutations $\sigma\in S_4$, the above claim holds, with $\tau=\sigma$. The remaining 2 permutations are $\sigma_1=(3412)$ and $\sigma_2=(2413)$. The point now is that we have $(1\otimes\sigma_1)(\sigma_1\otimes1)=(52143)$ and $\sigma_2^2=(4321)$, which both satisfy the requirements for $\tau$ in the above claim. See \cite{ban}.

\underline{Case $k=5$.} We have to study the 120 elements $\sigma\in S_5$, and best here is to consider the corresponding group $<\sigma>\subset S_5$, which is $G=\mathbb Z_s$ with $s=1,2,3,4,5,6$. At $s=1$ the result is clear, at $s=3,4,5$ what happens is that we can always find $\tau\in G$ satisfying $\tau(i+1)=\tau(i)\pm1$, for some $i$, and at $s=6$ the result follows from the $s=3$ result. Thus we are left with the case $s=2$. Here the cycle structure of $\sigma$ can be either $(2111)$, where the result is clear, or $(221)$, which is the case left. But here $\sigma$ must appear from one of the elements $(2143),(4321),(3412)\in S_4$ by adding a ``fixed point''. When this fixed point is at right or at left, the result is clear, so by symmetry it remains to study the 2 cases where this fixed point is either in the middle, or at left of the middle point. Thus we have $3\times2=6$ cases to be investigated, and 5 of these cases are trivial, in the sense that $\sigma$ itself satisfies $\sigma(i+1)=\sigma(i)\pm1$ for some $i$. The remaining case is $\sigma=(42513)$, and here $(1\otimes\sigma)(\sigma\otimes1)=(435621)$, which satisfies the requirements for $\tau$ in the above claim.

Thus we are done with the proof of the above claim. The point now is that, given $G\subset S_\infty$ as in the statement, we can pick $\sigma\in G_5-\{1_5\}$, and apply to it the above claim, perhaps several times, until we obtain either the basic crossing $(12)\in S_2$, or the half-liberated partition $(321)\in S_3$. We deduce from this that $G$ must be generated by one of these two partitions, and so we have $G=S_\infty$ or $G=S_\infty^*$, as desired.
\end{proof}

By combining now Proposition 2.3 and Proposition 2.4, we deduce:

\begin{theorem}
The conjectures in Proposition 2.1 hold, provided that the spheres in question are generated by relations coming from permutations $\sigma\in S_k$ with $k\leq5$.
\end{theorem}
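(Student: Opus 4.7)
The plan is to combine Propositions 2.3 and 2.4 to establish conjecture (1) of Proposition 2.1 in the restricted regime, and then appeal to the equivalence in Proposition 2.1 for (2) and (3), since the intersecting and restricting operations used there preserve the degree bound $k\leq 5$.

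Take an untwisted monomial sphere $S = S^{N-1}_{\mathbb R,E}$ with $E \subset \bigcup_{k\leq 5} S_k$, and let $G \subset S_\infty$ be its saturated filtered group, so that $E \subset G$ and $S = S^{N-1}_{\mathbb R,G}$. By Proposition 2.3, $G$ is weakly saturated. The goal is to show $G \in \{\{1\}, S_\infty^*, S_\infty\}$, whereupon Theorem 1.6 identifies $S$ with one of the three spheres from \cite{bgo}.

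The key step is to bring $G$ under the hypothesis of Proposition 2.4. If every element of $E$ is an identity permutation, then no non-trivial relation is imposed and $G = \{1\}$, giving $S = S^{N-1}_{\mathbb R,+}$. Otherwise, fix a non-trivial $\sigma \in E \cap S_k$ for some $k\leq 5$. The filtered property $G_k \times G_{5-k}\subset G_5$ together with $1_{5-k}\in G_{5-k}$ yields $\sigma \otimes 1_{5-k}\in G_5$ (simply $\sigma$ when $k=5$), a non-identity element, so $|G_5|>1$. Proposition 2.4 then forces $G \in \{S_\infty, S_\infty^*\}$, which completes (1) under the restriction. The restricted forms of (2) and (3) then follow from the intersecting and restricting arguments given in the proof of Proposition 2.1, noting that unions of generating sets contained in $S_{\leq 5}$ remain inside $S_{\leq 5}$, so the equivalence there respects the degree cap.

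The substantive obstacle has essentially been absorbed into Proposition 2.4, namely the case-by-case combinatorial analysis at $k=5$; this is exactly why the theorem stops at $k\leq 5$, and removing the restriction would require a corresponding extension of that analysis to higher $k$, which is not attempted here.
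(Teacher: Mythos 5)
Your proof follows the paper's approach exactly: you apply Propositions 2.3 and 2.4 to the saturated group $G$ of the sphere, then close out conjectures (2) and (3) via the intersecting/restricting equivalences of Proposition 2.1, which is precisely what the paper does. The one place you go beyond the paper's terse wording is the explicit padding argument ($\sigma\otimes 1_{5-k}\in G_5$ via the filtered property) to verify the hypothesis $|G_5|>1$ of Proposition 2.4; the paper leaves this step implicit, but the underlying reasoning is the same.
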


\begin{proof}
Proposition 2.3 and Proposition 2.4 tell us that at $k\leq5$ we have: 
$$\sigma\in S_k,\sigma\neq 1_k\implies S^{N-1}_{\mathbb R,\sigma}\in\{S^{N-1}_\mathbb R,S^{N-1}_{\mathbb R,*}\}$$

Thus the conjecture in Proposition 2.1 (1) holds under the $k\leq5$ assumption. The statements coming from Proposition 2.1 (2) and (3) follow as well, by intersecting.
\end{proof}

We believe that Proposition 2.4 should hold under the assumption $G\neq\{1\}$, therefore proving the conjectures in Proposition 2.1, but we were unable so far to extract something conceptual from the above proof, which would extend from $k\leq5$ to $k\in\mathbb N$. 

As a second piece of evidence for the conjectures in Proposition 2.1, we can try to intersect an arbitrary untwisted monomial sphere $S=S^{N-1}_{\mathbb R,E}$ with the 3 untwisted monomial spheres, or with the 5 monomial spheres, or with the 9 mixed monomial spheres, and see if we get indeed the results predicted by $S\in\{S^{N-1}_\mathbb R,S^{N-1}_{\mathbb R,*},S^{N-1}_{\mathbb R,+}\}$.

There are many interesting statements here, and as an example, we have:

\begin{proposition}
For any $F\subset S_\infty$ we have a formula as follows
$$S^{N-1}_\mathbb R\cap\bar{S}^{N-1}_{\mathbb R,F}=S^{N-1,d-1}_\mathbb R$$
for a certain number $d\in\{1,\ldots,N\}$. In addition, we have $d\in\{1,2,N\}$.
\end{proposition}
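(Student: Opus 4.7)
The plan is to establish two things: (a)~that the intersection is always a polygonal sphere $S^{N-1,d-1}_\mathbb R$ for some $d\in\{1,\ldots,N\}$, and (b)~that the integer $d$ is forced into $\{1,2,N\}$. For (a), on the commutative sphere $S^{N-1}_\mathbb R$ both sides of the twisted relation $\bar{\mathcal R}_\sigma$ coincide as monomials, so $\bar{\mathcal R}_\sigma$ collapses to a vanishing condition: whenever $\varepsilon\bigl(\ker(^{\,i_1\,\ldots\,i_k}_{i_{\sigma(1)}\,\ldots\,i_{\sigma(k)}})\bigr)=-1$, one forces $x_{i_1}\cdots x_{i_k}=0$, equivalently (on $S^{N-1}_\mathbb R$) $\prod_{v\in A}x_v=0$ with $A=\{i_1,\ldots,i_k\}$. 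Call such $A$ a \emph{forbidden support}. Since $\varepsilon$ depends only on the pattern of equalities among $i_1,\ldots,i_k$, the family of forbidden supports is invariant under relabeling of coordinates; it is also upward closed, because if $A$ is forbidden and $B\supseteq A$ then multiplying by extra generators gives $\prod_{v\in B}x_v=0$. Consequently the forbidden supports are exactly the subsets of size at least $d+1$ for some $d\in\{1,\ldots,N\}$ (with $d=N$ meaning no forbidding), and the intersection equals $\{x\in S^{N-1}_\mathbb R:|\mathrm{supp}(x)|\leq d\}=S^{N-1,d-1}_\mathbb R$.

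For (b) it suffices to prove that every $\sigma\in S_k\setminus\{\mathrm{id}\}$ admits a multi-index with at most $3$ distinct values and $\varepsilon(\ker)=-1$. By definition, $\varepsilon$ equals the sign by which the two monomials in $\bar{\mathcal R}_\sigma$ differ inside the anticommutative algebra of $\bar{S}^{N-1}_\mathbb R$ (where $x_ix_j=-x_jx_i$ for $i\neq j$). Comparing the signs arising when rearranging the top sequence and its $\sigma$-permutation into a common normal form yields, by a direct inversion-count computation,
\[
\varepsilon(\ker) \;=\; (-1)^{N_\sigma},\qquad N_\sigma \;=\; \#\bigl\{\{j,j'\}\in\mathrm{Inv}(\sigma^{-1}): j\text{ and }j'\text{ lie in different value classes}\bigr\},
\]
where $\mathrm{Inv}(\sigma^{-1})$ is the graph on $\{1,\ldots,k\}$ whose edges are the inversions of $\sigma^{-1}$ (pairs $j<j'$ with $\sigma^{-1}(j)>\sigma^{-1}(j')$), and the value classes are the blocks $\{j:i_j=v\}_v$ of the multi-index.

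Now let $G=\mathrm{Inv}(\sigma^{-1})$; since $\sigma\neq\mathrm{id}$, $G$ has at least one edge. Toggling a vertex between the two parts of a bipartition changes the cut parity by that vertex's degree, so if $G$ has any odd-degree vertex some bipartition yields an odd cut, giving $\varepsilon=-1$ with only $2$ distinct values. Otherwise every vertex of $G$ has even degree; picking any edge $\{u,v\}\in G$ and using the $3$-partition $\{u\},\{v\},\{1,\ldots,k\}\setminus\{u,v\}$, the number of cross-class edges equals $\deg(u)+\deg(v)-1$, which is odd (even plus even minus one), so $\varepsilon=-1$ with $3$ distinct values. Either way the minimum forbidden size coming from $\sigma$ is at most $3$, so if any nontrivial $\sigma\in F$ is present then $d\leq 2$; otherwise $d=N$. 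The main obstacle is the inversion-graph formula: one must carefully track the two rearrangement signs and the distinction between $\sigma$ and $\sigma^{-1}$; once the formula is in place, the parity dichotomy is a short graph-theoretic observation.
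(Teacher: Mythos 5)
Your proof is correct, and for the bound $d\in\{1,2,N\}$ it takes a genuinely different route from the paper. The paper reduces to $F=\{\sigma\}$ and observes that the twisted relations on $S^{N-1}_\mathbb R$ become vanishing relations, matching your ``forbidden supports'' step; but for the size bound the paper invokes, without detail, the fact that any $\sigma\neq1$ admits a coarsening $\pi\leq\sigma$ with $\varepsilon(\pi)=-1$ and $|\pi|\leq 3$, obtained ``by selecting a suitable crossing for $\sigma$ and collapsing all the other strings to a single block.'' You instead make the sign explicit via the inversion graph $G=\mathrm{Inv}(\sigma^{-1})$ -- I checked the formula $\varepsilon(\ker)=(-1)^{N_\sigma}$, which does follow from the rearrangement count the paper itself appeals to -- and then settle the size bound by a clean parity dichotomy: an odd-degree vertex yields an odd cut with a bipartition (two values), and in the all-even case a tripartition $\{u\},\{v\},\mathrm{rest}$ across an edge $\{u,v\}$ gives cut $\deg u+\deg v-1$, odd. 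This is more explicit than the paper's argument and actually handles a subtlety the paper glosses over: the naive ``pick any crossing and collapse'' gives cut $\deg u+\deg v-1$, which is even whenever $u,v$ have opposite degree parity (this really happens, e.g.\ for $\sigma^{-1}=(45123)$, whose inversion graph is $K_{2,3}$ with degrees $3,3,2,2,2$); your bipartition case is what rescues the bound in that situation. Minor remarks: you should say a word about why forbidden supports never have size $1$ (constant multi-index gives a one-block kernel, $\varepsilon=1$), so $d\geq1$; and for $N\leq 2$ the tripartition isn't available, but there $d\leq N$ is automatic, so nothing is lost. Incidentally, the paper's proof ends with ``$d\in\{2,3\}$,'' which is an off-by-one slip for $d\in\{1,2\}$; your indexing is the consistent one.
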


\begin{proof}
We can assume $F=\{\sigma\}$, with $\sigma\in S_k$, $\sigma\neq 1_k$.

(1) The anticommutation relations, when compared to the corresponding commutation relations, translate into relations of type $a_{i_1}\ldots a_{i_k}=0$, for certain indices $i$. Since we can permute the terms, and we can also replace $x_i^r\to x_i$ for any $r\geq2$, we are led to relations of type $x_{i_0}\ldots x_{i_r}=0$, for any $i_0,\ldots,i_r$ distinct. Now since the spheres $S^{N-1,r-1}_\mathbb R$ form an increasing sequence, by setting $d=\min(r)$ we obtain the formula in the statement.

(2) We use the defining formulae for $\bar{S}^{N-1}_{\mathbb R,\sigma}$, which are:
$$x_{i_1}\ldots x_{i_k}=\varepsilon\left(\ker\begin{pmatrix}i_1&\ldots&i_k\\ i_{\sigma(1)}&\ldots&i_{\sigma(k)}\end{pmatrix}\right)x_{i_{\sigma(1)}}\ldots x_{i_{\sigma(k)}}$$

By comparing with the commutation relation $x_{i_1}\ldots x_{i_k}=x_{i_{\sigma(1)}}\ldots x_{i_{\sigma(k)}}$, valid for the classical sphere $S^{N-1}_\mathbb R$, we conclude that the intersection $S^{N-1}_\mathbb R\cap \bar{S}^{N-1}_{\mathbb R,\sigma}$ consists of the points $x\in S^{N-1}_\mathbb R$ which are subject to the following relations:
$$\varepsilon\left(\ker\begin{pmatrix}i_1&\ldots&i_k\\ i_{\sigma(1)}&\ldots&i_{\sigma(k)}\end{pmatrix}\right)=-1\implies x_{i_1}\ldots x_{i_k}=0$$

In other words, given our permutation $\sigma\in S_k$, we can consider all the partitions $\pi\leq\sigma$, obtained by collapsing blocks. The partitions satisfying $\varepsilon(\pi)=1$ don't produce new relations, and the partitions satisfying $\varepsilon(\pi)=-1$ produce the following relations, where $r=|\pi|$ comes from the compression procedure explained in the proof of (1) above:
$$\ker\begin{pmatrix}i_1&\ldots&i_k\\ i_{\sigma(1)}&\ldots&i_{\sigma(k)}\end{pmatrix}=\pi\implies x_{i_1}\ldots x_{i_r}=0$$

We use now the fact that $\sigma\in S_\infty$, $\sigma\neq 1_1,1_2,1_3,\ldots$ implies $\exists\pi\leq\sigma$, $\varepsilon(\pi)=-1,|\pi|\leq3$, which comes by selecting a suitable crossing for $\sigma$, and then by collapsing all the other strings to a single block. Thus $d+1=\min(r)$ satisfies $d\in\{2,3\}$, and we are done.
\end{proof}

As a last comment, a useful ingredient for dealing with the conjectures in Proposition 2.1 would be a good diagrammatic framework for the polygonal spheres. Observe that all the relations that we need are of the following type, with $\alpha,\beta\in\{-,1,0,1\}$:
$$x_{i_1}\ldots x_{i_k}=\begin{cases}
\alpha\cdot x_{i_{\sigma(1)}}\ldots x_{i_{\sigma(k)}}&{\rm if}\ \varepsilon=1\\
\beta\cdot x_{i_{\sigma(1)}}\ldots x_{i_{\sigma(k)}}&{\rm if}\ \varepsilon=-1
\end{cases}$$

Here the number $\varepsilon=\pm1$ on the right is by definition given by:
$$\varepsilon=\varepsilon\left(\ker\begin{pmatrix}i_1&\ldots&i_k\\ i_{\sigma(1)}&\ldots&i_{\sigma(k)}\end{pmatrix}\right)$$

Thus the diagrams that we need are a priori the usual permutations, colored in $3\times 3=9$ ways, according to the values of $(\alpha,\beta)$. It is quite unclear, however, on how to turn this idea into an efficient computational tool, that can solve our conjectures.

\section{Affine actions}

We discuss now the computation of the quantum isometry groups of our spheres. We use the compact quantum group formalism developed by Woronowicz in \cite{wo1}, \cite{wo2}.

There are several ways of defining quantum isometries, depending on the type of manifold involved, see \cite{bsk}, \cite{bgs}, \cite{chi}, \cite{con}, \cite{go1}, \cite{go2}, \cite{qsa}, \cite{thi}, \cite{wa1}, \cite{wa2}. In what follows we use an algebraic geometry approach, inspired from Goswami's paper \cite{go2}. 

Assume that we are given an algebraic manifold $X\subset S^{N-1}_{\mathbb R,+}$, in the sense that $X$ appears via a presentation result as follows, for certain noncommutative polynomials $P_\alpha$:
$$C(X)=C(S^{N-1}_{\mathbb R,+})/<P_\alpha(x_1,\ldots,x_N)=0>$$

We say that a closed subgroup $G\subset O_N^+$ acts affinely on $X$ when we have a morphism of $C^*$-algebras $\Phi:C(X)\to C(G)\otimes C(X)$, given by $x_i\to\sum_ju_{ij}\otimes x_i$. Observe that such a morphism is automatically coassociative and counital, and unique. We have:

\begin{proposition}
Given an algebraic submanifold $X\subset S^{N-1}_{\mathbb R,+}$, we category of closed quantum subgroups $G\subset O_N^+$ acting affinely on $X$ has a universal object, $G^+(X)$.
\end{proposition}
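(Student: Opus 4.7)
The plan is to construct $G^+(X)$ by direct universal construction, following the template used by Goswami and others: one starts from $C(O_N^+)$, imposes the minimal relations forcing the candidate coaction $\Phi$ to be well defined, and then checks that the comultiplication descends to the quotient. Concretely, I would introduce in the free $C^*$-algebra underlying $C(O_N^+)$, with standard generators $u_{ij}$, the element
$$X_i=\sum_j u_{ij}\otimes x_j\in C(O_N^+)\otimes C(X),$$
and then, for each defining polynomial $P_\alpha$ of $X$, expand
$$P_\alpha(X_1,\ldots,X_N)=\sum_I c^\alpha_I\otimes m_I\in C(O_N^+)\otimes C(X),$$
where $m_I$ runs over some fixed spanning family of monomials of $C(X)$ and $c^\alpha_I$ are certain polynomials in the $u_{ij}$. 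Equivalently, and in a coordinate-free way, I would take the elements $(\mathrm{id}\otimes\omega)(P_\alpha(X_1,\ldots,X_N))$ for all states $\omega$ on $C(X)$. Let $I$ be the closed two-sided $*$-ideal of $C(O_N^+)$ generated by all these elements, and set $A=C(O_N^+)/I$. By construction, $A$ is a $C^*$-algebra (the quotient norm is well defined since $A$ is a quotient of $C(O_N^+)$, so the $u_{ij}$ remain norm-bounded), and the image of $P_\alpha(X_1,\ldots,X_N)$ in $A\otimes C(X)$ vanishes, so a $*$-homomorphism $\Phi:C(X)\to A\otimes C(X)$, $x_i\mapsto\sum_j u_{ij}\otimes x_j$, exists.

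The main step is to check that the coproduct $\Delta$ of $C(O_N^+)$, defined by $\Delta(u_{ij})=\sum_k u_{ik}\otimes u_{kj}$, descends to $A$, i.e.\ that $\Delta(I)\subset I\otimes A+A\otimes I$. The key identity, valid for any noncommutative polynomial $P$ in $x_1,\ldots,x_N$, is
$$(\Delta\otimes\mathrm{id})\bigl(P(X_1,\ldots,X_N)\bigr)=(\mathrm{id}\otimes\Phi)\bigl(P(X_1,\ldots,X_N)\bigr),$$
which follows by direct computation on the generators $x_i$ (both sides equal $\sum_{j,k}u_{ij}\otimes u_{jk}\otimes x_k$) and extends to arbitrary polynomials because $\Phi$ is a $*$-homomorphism. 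Applying this with $P=P_\alpha$, the right-hand side is $(\mathrm{id}\otimes\Phi)(0)=0$ in $A\otimes A\otimes C(X)$, so $(\Delta\otimes\mathrm{id})(P_\alpha(X_1,\ldots,X_N))=0$, and slicing with states on $C(X)$ shows that $\Delta$ sends the generators of $I$ into $A\otimes A$. A counit $\varepsilon(u_{ij})=\delta_{ij}$ and an antipode $S(u_{ij})=u_{ji}$ are inherited from $C(O_N^+)$ and are compatible with $I$ by the same slicing argument, because $(\varepsilon\otimes\mathrm{id})\Phi=\mathrm{id}$ and $(S\otimes\mathrm{id})\Phi$ correspond to the trivial coaction and the inverse coaction of $O_N^+$ on $S^{N-1}_{\mathbb R,+}$, both of which preserve $X$.

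Universality is then essentially tautological: if $G\subset O_N^+$ is any closed quantum subgroup acting affinely on $X$ via $\Phi_G:C(X)\to C(G)\otimes C(X)$, $x_i\mapsto\sum_j v_{ij}\otimes x_j$, then $P_\alpha(\sum_j v_{1j}\otimes x_j,\ldots)=\Phi_G(P_\alpha(x_1,\ldots,x_N))=0$ in $C(G)\otimes C(X)$. The surjection $C(O_N^+)\to C(G)$ sending $u_{ij}\mapsto v_{ij}$ therefore kills the generators of $I$, hence factors through $A=C(G^+(X))$; and this factorization is automatically a morphism of Woronowicz $C^*$-algebras because both coproducts are determined by the same formula on the $u_{ij}$.

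The only nontrivial point is the descent of $\Delta$, and the obstacle there is purely bookkeeping: one has to be careful that ``slicing by states on $C(X)$'' genuinely reduces an equation in $A\otimes C(X)$ (resp.\ $A\otimes A\otimes C(X)$) to equations in $A$ (resp.\ $A\otimes A$), which is fine because states separate points of a $C^*$-algebra. Everything else is standard categorical universality. A final remark: uniqueness of $G^+(X)$ up to canonical isomorphism is automatic from the universal property, and the earlier observation that $\Phi$, when it exists, is automatically coassociative and counital, ensures that the category of affine actions is genuinely a category of quantum subgroups, not of mere coactions.
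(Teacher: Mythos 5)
Your construction is the same as the paper's: you quotient $C(O_N^+)$ by the ideal generated by the relations $P_\alpha(X_1,\ldots,X_N)=0$ (slicing out the $C(X)$ leg to get actual elements of $C(O_N^+)$), then verify that the Woronowicz structure $(\Delta,\varepsilon,S)$ descends by checking it preserves these relations, and observe universality is built into the construction. The paper compresses the descent step into the one-line identity $P(X_1^\gamma,\ldots,X_N^\gamma)=(\gamma\otimes\mathrm{id})P(X_1,\ldots,X_N)=0$ for $\gamma\in\{\Delta,\varepsilon,S\}$, which is exactly your key identity $(\Delta\otimes\mathrm{id})P(X_\bullet)=(\mathrm{id}\otimes\Phi)P(X_\bullet)$ spelled out more carefully, so the two proofs differ only in the level of detail.
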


\begin{proof}
Assume indeed that $X$ is defined by polynomials $P_\alpha$ as above. Our claim is that $G=G^+(X)$ appears as follows, where $X_i=\sum_ju_{ij}\otimes x_j\in C(O_N^+)\otimes C(X)$:
$$C(G)=C(O_N^+)/<P_\alpha(X_1,\ldots,X_N)=0>$$

In order to prove this claim, we have to clarify how the relations $P_\alpha(X_1,\ldots,X_N)=0$ are interpreted inside $C(O_N^+)$, and then show that $G$ is indeed a quantum group.

So, pick one of the defining polynomials, $P=P_\alpha$, and write it as follows:
$$P(x_1,\ldots,x_N)=\sum_r\alpha_r\cdot x_{i_1^r}\ldots x_{i_{s(r)}^r}$$

With $X_i=\sum_ju_{ij}\otimes x_j$ as above, we have the following formula:
$$P(X_1,\ldots,X_N)=\sum_r\alpha_r\sum_{j_1^r\ldots j_{s(r)}^r}u_{i_1^rj_1^r}\ldots u_{i_{s(r)}^rj_{s(r)}^r}\otimes x_{j_1^r}\ldots x_{j_{s(r)}^r}$$

Since the space spanned by the variables at right is finite dimensional, the relations $P(X_1,\ldots,X_N)=0$ correspond indeed to certain relations between the variables $u_{ij}$.

It order to show that $G$ is indeed a quantum group, consider the following elements:
$$u_{ij}^\Delta=\sum_ku_{ik}\otimes u_{kj}\quad,\quad
u_{ij}^\varepsilon=\delta_{ij}\quad,\quad
u_{ij}^S=u_{ji}$$

Now if we consider the associated elements $X_i^\gamma=\sum_ju_{ij}^\gamma\otimes x_j$, with $\gamma\in\{\Delta,\varepsilon,S\}$, then from the relations $P(X_1,\ldots,X_N)=0$ we deduce that we have:
$$P(X_1^\gamma,\ldots,X_N^\gamma)=(\gamma\otimes id)P(X_1,\ldots,X_N)=0$$

Thus, by using the universal property of $G$, we can construct morphisms of algebras mapping $u_{ij}\to u_{ij}^\gamma$ for any $\gamma\in\{\Delta,\varepsilon,S\}$, and this finishes the proof.
\end{proof}

As an illustration, we have the following statement, coming from \cite{bhg}, \cite{go2}:

\begin{proposition}
Assume that $X\subset S^{N-1}_\mathbb R$ is invariant under $x_i\to-x_i$, for any $i$.
\begin{enumerate}
\item If the coordinates $x_1,\ldots,x_N$ are linearly independent inside $C(X)$, then the group $G(X)=G^+(X)\cap O_N$ consists of the usual isometries of $X$.

\item In addition, in the case where the products of coordinates $\{x_ix_j|i\leq j\}$ are linearly independent inside $C(X)$, we have $G^+(X)=G(X)$.
\end{enumerate}
\end{proposition}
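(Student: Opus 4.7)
The plan for part (1) is to identify $G(X) = G^+(X)\cap O_N$ with the set $\{u\in O_N \,|\, uX=X\}$. By the construction of $G^+(X)$ in Proposition 3.1, a classical $u\in O_N$ lies in $G^+(X)\cap O_N$ iff the substitution $x_i\mapsto\sum_j u_{ij}x_j$ preserves all defining relations of $X$, which is exactly the condition $uX\subset X$, hence $uX=X$ by invertibility. The linear independence of the $x_i$ ensures that the assignment $u\mapsto(x\mapsto ux)$ is injective on $O_N$, while the sign-change invariance of $X$ (combined with Mazur-Ulam and a straightforward averaging on pairs $\pm x$) guarantees that every metric isometry of $X$ lifts to a genuinely linear element of $O_N$ rather than to an affine isometry with a nonzero translation. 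Together these identify $G(X)$ with the classical isometry group of $X$.

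For part (2), the plan is to force the universal generators $u_{ij}$ of $C(G^+(X))$ to commute, so that $G^+(X)\subset O_N$ and hence $G^+(X)=G^+(X)\cap O_N=G(X)$. Apply the universal morphism $\Phi$ to the identity $x_ix_j=x_jx_i$ in $C(X)$ and rearrange:
$$\sum_{a,b}(u_{ia}u_{jb}-u_{ja}u_{ib})\otimes x_ax_b=0.$$
Using $x_ax_b=x_bx_a$ together with the assumed linear independence of $\{x_ax_b\,|\,a\leq b\}$, extract two families of relations: from the diagonal $a=b$, the column commutativity $[u_{ia},u_{ja}]=0$; and from the off-diagonal $a<b$, the skew-symmetry $[u_{ia},u_{jb}]+[u_{ib},u_{ja}]=0$.

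Next apply the antipode $S(u_{ij})=u_{ji}$, which is an anti-homomorphism satisfying $S([a,b])=-[S(a),S(b)]$. This turns column commutativity into row commutativity $[u_{ai},u_{aj}]=0$, and turns the skew-symmetry relation into a dual version $[u_{ai},u_{bj}]+[u_{bi},u_{aj}]=0$ for $a\neq b$. For $p\neq r$ and $q\neq s$ the original skew-symmetry gives $[u_{pq},u_{rs}]=-[u_{ps},u_{rq}]$, while the dual gives $[u_{pq},u_{rs}]=+[u_{ps},u_{rq}]$; summing yields $2[u_{pq},u_{rs}]=0$ and hence $[u_{pq},u_{rs}]=0$. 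Combined with the row and column cases this proves full commutativity of all generators.

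The main obstacle I expect is the combinatorial bookkeeping in the last step: verifying that the original and $S$-transformed skew-symmetries really are independent relations (one has the ``free'' indices in the first slot of each commutator, the other in the second slot), so that their sum produces $2[u_{pq},u_{rs}]$ rather than a tautology. Once the sign tracking in $S([a,b])=-[S(a),S(b)]$ is handled cleanly and the indices are relabeled correctly, the two relations add rather than cancel and the argument closes.
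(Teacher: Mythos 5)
Your proof of part (2) is exactly the paper's argument (the ``trick from \cite{bhg}''): apply the coaction to $[x_i,x_j]=0$, use the linear independence of $\{x_kx_l\,|\,k\leq l\}$ to extract the relation $[u_{ik},u_{jl}]=[u_{jk},u_{il}]$, apply the antipode and relabel to obtain the complementary relation $[u_{ik},u_{jl}]=[u_{il},u_{jk}]$, and add the two (using $[a,b]=-[b,a]$) to force $[u_{ik},u_{jl}]=0$. The worry you flag about the two relations adding rather than cancelling is unfounded: the sign $S([a,b])=-[S(a),S(b)]$ combined with the index swap does make the two relations carry opposite signs in front of $[u_{ps},u_{rq}]$, so the sum is $2[u_{pq},u_{rs}]$, not zero. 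For part (1) the paper simply defers to \cite{go2}; your sketch (identify $G(X)$ with $\{u\in O_N\,|\,uX=X\}$, then argue via sign-invariance and the spanning of $\mathbb R^N$ by $X$ that metric isometries of $X$ extend to orthogonal maps) is the standard argument behind that reference, though Mazur--Ulam does not apply literally to the non-convex set $X$ --- one instead observes that $-x$ is the unique point of $X$ at distance $2$ from $x$, forcing any isometry to be odd, and then extends linearly using the spanning hypothesis.
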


\begin{proof}
This follows from \cite{bhg}, \cite{go2}, the idea being as follows:

(1) The assertion here is well-known, $G(X)=G^+(X)\cap O_N$ being by definition the biggest subgroup $G\subset O_N$ acting affinely on $X$. We refer to \cite{go2} for details, and for a number of noncommutative extensions of this fact, with $G(X)$ replaced by $G^+(X)$.

(2) Here we must prove that, whenever we have a coaction $\Phi:C(X)\to C(G)\otimes C(X)$, given by $\Phi(x_i)=\sum_ju_{ij}\otimes x_j$, the variables $u_{ij}$ commute. But this follows by using a strandard trick, from \cite{bhg}, that we will briefly recall now. We can write:
$$\Phi([x_i,x_j])=\sum_{k\leq l}\left([u_{ik},u_{jl}]-[u_{jk},u_{il}]\right)\otimes\left(1-\frac{\delta_{kl}}{2}\right)x_kx_l$$

Now since the variables $\{x_kx_l|k\leq l\}$ are linearly independent, we obtain from this $[u_{ik},u_{jl}]=[u_{jk},u_{il}]$, for any $i,j,k,l$. Moreover, if we apply now the antipode we further obtain $[u_{lj},u_{ki}]=[u_{li},u_{kj}]$, and by relabelling, $[u_{ik},u_{jl}]=[u_{il},u_{jk}]$. We therefore conclude that we have $[u_{ik},u_{jl}]=0$ for any $i,j,k,l$, and this finishes the proof. See \cite{bhg}.
\end{proof}

With the above notion in hand, let us investigate the polygonal spheres. We recall from \cite{ban}, \cite{bgo} that the quantum isometry groups of the 5 main spheres are as follows:
$$\xymatrix@R=12mm@C=12mm{
S^{N-1}_\mathbb R\ar[r]\ar@{~}[d]&S^{N-1}_{\mathbb R,*}\ar[r]\ar@{~}[d]&S^{N-1}_{\mathbb R,+}\ar@{~}[d]&\bar{S}^{N-1}_{\mathbb R,*}\ar[l]\ar@{~}[d]&\bar{S}^{N-1}_\mathbb R\ar[l]\ar@{~}[d]\\
O_N\ar[r]&O_N^*\ar[r]&O_N^+&\bar{O}_N^*\ar[l]&\bar{O}_N\ar[l]}$$

Here $O_N$ is the orthogonal group, $O_N^+$ is its free version constructed in \cite{wa1}, $\bar{O}_N$ is its twist constructed in \cite{bbc}, $O_N^*$ is its half-liberated version studied in \cite{bve}, and $\bar{O}_N^*$ is its twisted half-liberated version constructed in \cite{ban}. We refer to \cite{ban} for full details here.

In the polygonal case now, we begin with the classical case. We use the hyperoctahedral group $H_N$, and its free version $H_N^+$ constructed in \cite{bbc}. We have:

\begin{proposition}
The quantum isometry group of $S^{N-1,d-1}_\mathbb R$ is as follows:
\begin{enumerate}
\item At $d=1$ we obtain the free hyperoctahedral group $H_N^+$.

\item At $d=2,\ldots,N-1$ we obtain the hyperoctahedral group $H_N$.

\item At $d=N$ we obtain the orthogonal group $O_N$.
\end{enumerate}
\end{proposition}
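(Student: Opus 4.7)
The plan is to treat the three cases separately. Case (3) is immediate: when $d=N$, the condition $x_{i_0}\ldots x_{i_N}=0$ on $N+1$ distinct indices from $\{1,\ldots,N\}$ is vacuous, so $S^{N-1,N-1}_\mathbb R=S^{N-1}_\mathbb R$ and the statement reduces to the known equality $G^+(S^{N-1}_\mathbb R)=O_N$ recalled at the start of this section.

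For case (2), with $2\leq d\leq N-1$, I would apply Proposition 3.2(2). First one verifies that $\{x_i\}_i$ and $\{x_ix_j\}_{i\leq j}$ are linearly independent in $C(S^{N-1,d-1}_\mathbb R)$ by evaluation: the standard basis vectors $e_k$ separate the linear coordinates, and the points $(e_i+e_j)/\sqrt{2}$, which lie in $S^{N-1,d-1}_\mathbb R$ precisely because $d\geq 2$, suffice together with the $e_k$ to separate all products. Proposition 3.2(2) then gives $G^+(S^{N-1,d-1}_\mathbb R)=G(S^{N-1,d-1}_\mathbb R)\subset O_N$, reducing everything to a classical isometry computation. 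Signed permutations clearly preserve the condition ``at most $d$ nonzero coordinates'', so $H_N\subset G$; conversely, any orthogonal transformation preserving $S^{N-1,d-1}_\mathbb R$ must preserve the distinguished subset $\{\pm e_i\}$, characterized intrinsically as the points where the maximal number $\binom{N-1}{d-1}\geq 2$ of coordinate $(d-1)$-subspheres meet, and is therefore a signed permutation.

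Case (1) with $d=1$ is the substantive computation. Here $S^{N-1,0}_\mathbb R=\{\pm e_1,\ldots,\pm e_N\}$ is a finite set, so $C(S^{N-1,0}_\mathbb R)\cong\mathbb C^{2N}$ has basis $\{x_k,x_k^2\}_k$ and satisfies $x_kx_l=\delta_{kl}x_k^2$. Given an affine coaction $x_i\mapsto X_i=\sum_j u_{ij}\otimes x_j$, I expand the relation $X_iX_j=0$ for $i\neq j$ as $\sum_k u_{ik}u_{jk}\otimes x_k^2=0$; linear independence of the $x_k^2$ then forces $u_{ik}u_{jk}=0$ whenever $i\neq j$. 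Applying the antipode of $G^+(S^{N-1,0}_\mathbb R)$ to this identity yields the dual relation $u_{kj}u_{ki}=0$ for $i\neq j$. Together these are the defining relations of $H_N^+\subset O_N^+$ from \cite{bbc}, giving $G^+\subset H_N^+$. The reverse inclusion reduces to checking that the coaction $X_i=\sum_j u_{ij}\otimes x_j$ satisfies the defining relations of $S^{N-1,0}_\mathbb R$ (self-adjointness, pairwise vanishing $X_iX_j=0$ for $i\neq j$, and $\sum_iX_i^2=1$) when the $u_{ij}$ are taken as the generators of $C(H_N^+)$; this is a direct calculation using the sudoku relations together with the orthogonality of $u$.

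The main obstacle is case (1): one must identify the relations obtained from the affine coaction condition --- first the ``column'' relations $u_{ik}u_{jk}=0$ directly, then the ``row'' relations $u_{kj}u_{ki}=0$ via the antipode --- with the precise sudoku relations defining $H_N^+$. In case (2), the remaining geometric step of pinning down $G(S^{N-1,d-1}_\mathbb R)=H_N$ requires exhibiting $\{\pm e_i\}$ as an intrinsic invariant of the polygonal structure.
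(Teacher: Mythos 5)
Your proposal is correct, and its skeleton (use Proposition~3.2(2) to reduce to classical isometries for $d\geq 2$, then identify the classical isometry group) is the same as the paper's, but two of the three cases are handled by genuinely different arguments.

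For $d=1$, the paper simply observes $S^{N-1,0}_{\mathbb R}=\mathbb Z_2^{\oplus N}$ and cites the computation $G^+(\mathbb Z_2^{\oplus N})=H_N^+$ from \cite{bbc}, whereas you rederive the sudoku relations $u_{ik}u_{jk}=0$ ($i\neq j$) directly from $X_iX_j=0$ and linear independence of $\{x_k^2\}$, obtaining the row relations by the antipode; this is a self-contained version of the same computation. For $d\in\{2,\ldots,N-1\}$, after the common $G^+(X)=G(X)$ reduction (your linear-independence check by evaluation at $e_k$ and $(e_i+e_j)/\sqrt2$ is a pleasant shortcut compared to the paper's restriction-to-circles argument), the routes diverge: the paper first treats $d=2$, showing that a $U\in O_N$ permuting the circles $\mathbb T^I$ must fix $\{\pm e_i\}=\bigcup_{|I\cap J|=1}\mathbb T^I\cap\mathbb T^J$, and then for general $d$ proves $S^{N-1,d-2}_\mathbb R$ is $U$-invariant and descends by recurrence; you instead pin down $\{\pm e_i\}$ in one step, as the points lying on the strictly maximal number $\binom{N-1}{d-1}$ of coordinate $(d-1)$-subspheres (the count $\binom{N-m}{d-m}$ for a point with $m$ nonzero coordinates is strictly decreasing in $m$ once $d\leq N-1$). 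Your version avoids the induction and handles all $d\geq 2$ uniformly. Note that both arguments silently use that $U$ permutes the subspheres $(S^{d-1}_\mathbb R)^I$; this is justified because a linear isometry sends great $(d-1)$-spheres to great $(d-1)$-spheres, and the only $d$-dimensional subspaces $V$ with $V\cap S^{N-1}_\mathbb R\subset S^{N-1,d-1}_\mathbb R$ are the coordinate ones (a $d$-dimensional subspace contained in a finite union of $d$-dimensional coordinate subspaces must equal one of them). It would be worth stating that step explicitly in either version. Case $d=N$ is identical in both.
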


\begin{proof}
Observe first that the sphere $S^{N-1,d-1}_\mathbb R$ appears by definition as a union on $\binom{N}{d}$ copies of the sphere $S^{d-1}_\mathbb R$, one for each choice of $d$ coordinate axes, among the coordinate axes of $\mathbb R^N$. We can write this decomposition as follows, with $I_N=\{1,\ldots,N\}$:
$$S^{N-1,d-1}_\mathbb R=\bigcup_{I\subset I_N,|I|=d}(S^{d-1}_\mathbb R)^I$$

(1) At $d=1$ our sphere is $S^{N-1,0}_\mathbb R=\mathbb Z_2^{\oplus N}$, formed by the endpoints of the $N$ copies of $[-1,1]$ on the coordinate axes of $\mathbb R^N$. Thus by \cite{bbc} the quantum isometry group is $H_N^+$.

(2) Our first claim is that at $d\geq2$, the elements $\{x_ix_j|i\leq j\}$ are linearly independent. Since $S^{N-1,1}_\mathbb R\subset S^{N-1,d}_\mathbb R$, we can restrict attention to the case $d=2$. Here the above decomposition is as follows, where $\mathbb T^{\{i,j\}}$ denote the various copies of $\mathbb T$:
$$S^{N-1,d-1}_\mathbb R=\bigcup_{i<j}\mathbb T^{\{i,j\}}$$

Now since $\{x^2,y^2,xy\}$ are linearly independent over $\mathbb T\subset\mathbb R^2$, we deduce from this that $\{x_ix_j|i\leq j\}$ are linearly independent over $S^{N-1,d-1}_\mathbb R$, and we are done. Thus, our claim is proved, and so Proposition 3.2 (2) above applies, and gives $G^+(X)=G(X)$. 

We are therefore left with proving $G(X)=H_N$, for any $d\in\{2,\ldots,N-1\}$. 

Let us first discuss the case $d=2$. Here any affine isometric action $U\curvearrowright S^{N-1,1}_\mathbb R$ must permute the $\binom{N}{2}$ circles $\mathbb T^I$, so we can write $U(\mathbb T^I)=\mathbb T^{I'}$, for a certain permutation of the indices $I\to I'$. Now since $U$ is bijective, we deduce that for any $I,J$ we have:
$$U\left(\mathbb T^I\cap\mathbb T^J\right)=\mathbb T^{I'}\cap\mathbb T^{J'}$$

Since for $|I\cap J|=0,1,2$ we have $\mathbb T^I\cap\mathbb T^J\simeq\emptyset,\{-1,1\},\mathbb T$, by taking the union over $I,J$ with $|I\cap J|=1$, we deduce that $U(\mathbb Z_2^{\oplus N})=\mathbb Z_2^{\oplus N}$. Thus $U\in H_N$, and we are done. 

In the general case now, $d\in\{2,\ldots,N-1\}$, we can proceed similarly, by recurrence. Indeed, for any subsets $I,J\subset I_N$ with $|I|=|J|=d$ we have:
$$(S^{d-1}_\mathbb R)^I\cap(S^{d-1}_\mathbb R)^J=(S^{|I\cap J|-1}_\mathbb R)^{I\cap J}$$

By using $d\leq N-1$, we deduce that we have the following formula:
$$S^{N-1,d-2}_\mathbb R=\bigcup_{|I|=|J|=d,|I\cap J|=d-1}(S^{|I\cap J|-1}_\mathbb R)^{I\cap J}$$

On the other hand, by using the same argument as in the $d=2$ case, we deduce that the space on the right is invariant, under any affine isometric action on $S^{N-1,d-1}_\mathbb R$. Thus by recurrence we obtain $G(S^{N-1,d-1}_\mathbb R)=G(S^{N-1,d-2}_\mathbb R)=H_N$, and we are done.

(3) At $d=N$ the result is known since \cite{bgo}, with the proof coming from the equality $G^+(X)=G(X)$, deduced from Proposition 3.2 (2), as explained above.
\end{proof}

In order to discuss the twisted case, we recall the following definition, from \cite{bbc}:

\begin{definition}
$\bar{O}_N$ is the quantum group obtained by imposing the  relations
$$ab=\begin{cases}
-ba&{\rm for}\ a\neq b\ {\rm on\ the\ same\ row\ or\ column\ of\ }u\\
ba&{\rm otherwise}
\end{cases}$$
to the standard coordinates $u_{ij}$ of the quantum group $O_N^+$.
\end{definition}

As explained in \cite{bbc}, this quantum group has an interesting noncommutative geometric meaning, because it is the quantum isometry group of the hypercube $\mathbb Z_2^N\subset\mathbb R^N$. Thus, $\bar{O}_N$ is a natural analogue of the hyperoctahedral group $H_N$. However, quite surprisingly, $\bar{O}_N$ is not the free version of $H_N$. The correct free version of $H_N$ is the quantum isometry group $H_N^+$ of the space $\mathbb Z_2^{\oplus N}\subset\mathbb R^N$ formed by the $N$ copies of $\mathbb Z_2\subset\mathbb R$ on the coordinate axes of $\mathbb R^N$, that we already met in Proposition 3.3 (1) above. See \cite{bbc}.

Now back to the polygonal spheres, the study in the twisted case is considerably more difficult than in the classical case, and we have complete results only at $d=1,2,N$. Our statement here, to be enhanced later on only with a few minor results, is:

\begin{theorem}
The quantum isometry group of $\bar{S}^{N-1,d-1}_\mathbb R$ is as follows:
\begin{enumerate}
\item At $d=1$ we obtain the free hyperoctahedral group $H_N^+$.

\item At $d=2$ we obtain the hyperoctahedral group $H_N$.

\item At $d=N$ we obtain the twisted orthogonal group $\bar{O}_N$.
\end{enumerate}
\end{theorem}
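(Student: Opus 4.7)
For $d=1$, observe that the polygonal vanishing $x_{i_0}x_{i_1}=0$ for $i_0\neq i_1$ already implies both the classical commutation and the twisted anticommutation, so $C(\bar{S}^{N-1,0}_{\mathbb{R}})=C(S^{N-1,0}_{\mathbb{R}})=C(\mathbb{Z}_2^{\oplus N})$ and the conclusion $G^+=H_N^+$ is immediate from \cite{bbc} (matching Proposition 3.3(1)). For $d=N$, the polygonal relations are vacuous, since one cannot pick $N+1$ distinct indices from $\{1,\ldots,N\}$; hence $\bar{S}^{N-1,N-1}_{\mathbb{R}}=\bar{S}^{N-1}_{\mathbb{R}}$ and the result $G^+=\bar{O}_N$ is from \cite{ban}, as recorded in the diagram preceding Proposition 3.3.

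The essential new case is $d=2$. The inclusion $H_N\subseteq G^+(\bar{S}^{N-1,1}_{\mathbb{R}})$ is immediate: any signed permutation $(\epsilon,\sigma)$ defines a $*$-homomorphism $x_i\mapsto\epsilon_i x_{\sigma(i)}$ preserving the sphere relation (via $\epsilon_i^2=1$), the anticommutation (via bijectivity of $\sigma$), and the triple vanishing, and these assemble into a coaction of $C(H_N)$. For the reverse direction, I would combine the identification $\bar{S}^{N-1,1}_{\mathbb{R}}=\bar{S}^{N-1}_{\mathbb{R}}\cap S^{N-1}_{\mathbb{R},*}$ from Proposition 1.2 with the elementary monotonicity $X\subseteq Y\Rightarrow G^+(X)\subseteq G^+(Y)$ (evident from the presentation in Proposition 3.1) to obtain $G:=G^+(\bar{S}^{N-1,1}_{\mathbb{R}})\subseteq\bar{O}_N\cap O_N^*$. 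Thus the coordinates $u_{ij}$ already satisfy both the twisted commutations of Definition 3.4 and the half-commutation $u_{ij}u_{kl}u_{mn}=u_{mn}u_{kl}u_{ij}$. Next, applying $\Phi$ to $x_ix_jx_k=0$ (for $i,j,k$ distinct) and organising the expansion along the degree-$3$ basis of $C(\bar{S}^{N-1,1}_{\mathbb{R}})$, namely $\{x_p^3\}_p\cup\{x_p^2x_q:p\neq q\}$, the coefficient of $x_p^3$ yields
\[
u_{ip}\,u_{jp}\,u_{kp}=0\qquad\text{for all distinct }i,j,k\text{ and all }p,
\]
and the coefficient of $x_p^2x_q$ yields an auxiliary triple relation of the form $u_{ip}u_{jp}u_{kq}-u_{ip}u_{jq}u_{kp}+u_{iq}u_{jp}u_{kp}=0$. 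Combined with the column anticommutations, the centrality of $u_{kp}^2$ in its column subalgebra, and the orthogonality $\sum_ku_{kp}^2=1$, these relations should force the sudoku property $u_{ip}u_{jp}=0$ for $i\neq j$; applying the antipode yields the row version $u_{ip}u_{iq}=0$ for $p\neq q$, so $G\subseteq H_N^+$, and then the half-commutation of sudoku entries collapses to full commutation, giving $G\subseteq H_N$ as required.

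The hardest step is the sudoku deduction itself. Multiplying $u_{ip}u_{jp}u_{kp}=0$ on the right by $u_{kp}$, using centrality of $u_{kp}^2$, and summing over $k\neq i,j$ gives only
\[
u_{ip}\,u_{jp}\,(1-u_{ip}^2-u_{jp}^2)=0,
\]
which is strictly weaker than $u_{ip}u_{jp}=0$. To close the gap, one must combine this identity with the auxiliary triple relations from the $x_p^2x_q$ coefficients, most likely together with a positivity argument on the C*-algebra $C(G)$ ruling out spurious solutions. This is precisely where the ``ad-hoc tricks'' alluded to in the introduction, drawing on the techniques of \cite{bbc}, \cite{bhg}, \cite{bdu}, \cite{rw2}, are expected to enter the proof.
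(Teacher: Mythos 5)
Your treatment of $d=1$ and $d=N$ is correct and matches the paper. The $d=2$ argument, however, has a genuine error at the very first step, and it then misses the trick that makes the final ``sudoku'' deduction go through.

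The claimed monotonicity $X\subseteq Y\Rightarrow G^+(X)\subseteq G^+(Y)$ is false. From the presentation in Proposition 3.1, the relations defining $G^+(X)$ are read off inside $C(O_N^+)\otimes C(X)$, while those defining $G^+(Y)$ are read off inside $C(O_N^+)\otimes C(Y)$; these are not comparable, and a subgroup acting on $X$ has no reason to preserve a bigger $Y\supseteq X$. In fact the theorem you are proving is itself a counterexample: $S^{N-1,0}_\mathbb R\subset S^{N-1}_\mathbb R$ but $G^+(S^{N-1,0}_\mathbb R)=H_N^+\not\subseteq O_N=G^+(S^{N-1}_\mathbb R)$. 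So you cannot conclude $G\subseteq\bar O_N\cap O_N^*$ this way. The paper instead proves $G\subseteq\bar O_N$ directly by the twisted version of the Bhowmick--Goswami trick: one applies the coaction to $[[x_i,x_j]]:=x_ix_j+x_jx_i=0$ ($i\neq j$), uses the linear independence of $\{x_ix_j\mid i\leq j\}$ over $\bar S^{N-1,1}_\mathbb R$ (checked by projecting via $x_k\mapsto 0$ for $k\neq i,j$ onto $\bar S^1_\mathbb R$), applies the antipode, and obtains the defining relations of $\bar O_N$. No appeal to $O_N^*$ is made or needed.

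The second gap is in the final step, and it is exactly the one you flag as difficult. Your expansion of $X_iX_jX_k$ in the basis $\{x_a^2x_b\}$ is correct, giving $u_{ia}u_{ja}u_{kb}-u_{ia}u_{jb}u_{ka}+u_{ib}u_{ja}u_{ka}=0$ for $i,j,k$ distinct. But multiplying by $u_{kp}$ and summing over $k$, as you suggest, only yields the weak relation $u_{ia}u_{ja}(1-u_{ia}^2-u_{ja}^2)=0$, as you observe. The missing idea is to first use the $\bar O_N$ relations (already established) to rewrite the triple relation in \emph{cyclic} form,
$$u_{ia}u_{ja}u_{kb}+u_{ja}u_{ka}u_{ib}+u_{ka}u_{ia}u_{jb}=0,$$
so that the index $b$ sits in the rightmost factor of each term, attached to $k$, $i$, $j$ respectively. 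Now multiply on the right by $u_{kb}$ and sum over $b$ (not over $k$): orthogonality of the rows of $u$ kills the second and third terms and reduces the first to $u_{ia}u_{ja}$, giving $u_{ia}u_{ja}=0$ for $i\neq j$ outright. Since $C(\bar O_N)/\langle u_{ia}u_{ja}=0\rangle=C(H_N)$, this finishes the proof without any further positivity argument. The cyclic rewriting, available only because $G\subseteq\bar O_N$ has already been established, is the ``ad-hoc trick'' you were looking for.
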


\begin{proof}
The idea is to adapt the proof of Proposition 3.3 above:

(1) At $d=1$ we have $\bar{S}^{N-1,0}_\mathbb R=S^{N-1,0}_\mathbb R=\mathbb Z_2^{\oplus N}$, and by Proposition 3.3 (1) above, coming from \cite{bbc}, the corresponding quantum isometry group is indeed $H_N^+$. 

(2) As a first ingredient, we will need the twisted analogue of the trick from \cite{bhg}, explained in the proof of Proposition 3.2 (2) above. This twisted trick was already worked out in \cite{ban}, for the sphere $\bar{S}^{N-1}_\mathbb R$ itself, and the situation is similar for any closed subset $X\subset\bar{S}^{N-1}_\mathbb R$, having the property that the variables $\{x_ix_j|i\leq j\}$ are linearly indepedent. More presisely, our claim is that if $G\subset O_N^+$ acts on $X$, then we must have $G\subset\bar{O}_N$.

Indeed, given a coaction $\Phi(x_i)=\sum_ju_{ij}\otimes x_j$, we can write, as in \cite{ban}:
$$\Phi(x_ix_j)=\sum_ku_{ik}u_{jk}\otimes x_k^2+\sum_{k<l}(u_{ik}u_{jl}-u_{il}u_{jk})\otimes x_kx_l$$

We deduce that with $[[a,b]]=ab+ba$ we have the following formula:
$$\Phi([[x_i,x_j]])=\sum_k[[u_{ik},u_{jk}]]\otimes x_k^2+\sum_{k<l}([u_{ik},u_{jl}]-[u_{il},u_{jk}])\otimes x_kx_l$$

Now assuming $i\neq j$, we have $[[x_i,x_j]]=0$, and we therefore obtain $[[u_{ik},u_{jk}]]=0$ for any $k$, and $[u_{ik},u_{jl}]=[u_{il},u_{jk}]$ for any $k<l$. By applying the antipode and then by relabelling, the latter relation gives $[u_{ik},u_{jl}]=0$. Thus we have reached to the defining relations for the quantum group $\bar{O}_N$, and so we have $G\subset\bar{O}_N$, as claimed.

Our second claim is that the above trick applies to any $\bar{S}^{N-1,d-1}_\mathbb R$ with $d\geq2$. Indeed, by using the maps $\pi_{ij}:C(\bar{S}^{N-1,d-1}_\mathbb R)\to C(\bar{S}^1_\mathbb R)$ obtained by setting $x_k=0$ for $k\neq i,j$, we conclude that the variables $\{x_ix_j|i\leq j\}$ are indeed linearly independent over $\bar{S}^{N-1,d-1}_\mathbb R$ .

Summarizing, we have proved so far that if a compact quantum group $G\subset O_N^+$ acts on a polygonal sphere $\bar{S}^{N-1,d-1}_\mathbb R$ with $d\geq2$, then we must have $G\subset\bar{O}_N$. We must now adapt the second part of the proof of Proposition 3.3, and since this is quite unobvious at $d\geq3$, we will restrict now attention to the case $d=2$, as in the statement.

So, consider a compact quantum group $G\subset\bar{O}_N$. In order to have a coaction map $\Phi:C(\bar{S}^{N-1,1}_\mathbb R)\to C(G)\otimes C(\bar{S}^{N-1,1}_\mathbb R)$, given as usual by $\Phi(x_i)=\sum_ju_{ij}\otimes x_j$, the elements $X_i=\sum_ju_{ij}\otimes x_j$ must satisfy the relations $X_iX_jX_k=0$, for any $i,j,k$ distinct.

So, let us compute $X_iX_jX_k$ for $i,j,k$ distinct. We have:
\begin{eqnarray*}
X_iX_jX_k
&=&\sum_{abc}u_{ia}u_{jb}u_{kc}\otimes x_ax_bx_c=\sum_{a,b,c\ not \ distinct}u_{ia}u_{jb}u_{kc}\otimes x_ax_bx_c\\
&=&\sum_{a\neq b}u_{ia}u_{ja}u_{kb}\otimes x_a^2x_b+\sum_{a\neq b}u_{ia}u_{jb}u_{ka}\otimes x_ax_bx_a\\
&+&\sum_{a\neq b}u_{ib}u_{ja}u_{ka}\otimes x_bx_a^2+\sum_au_{ia}u_{ja}u_{ka}\otimes x_a^3
\end{eqnarray*}

By using $x_ax_bx_a=-x_a^2x_b$ and $x_bx_a^2=x_a^2x_b$, we deduce that we have:
\begin{eqnarray*}
X_iX_jX_k
&=&\sum_{a\neq b}(u_{ia}u_{ja}u_{kb}-u_{ia}u_{jb}u_{ka}+u_{ib}u_{ja}u_{ka})\otimes x_a^2x_b+\sum_au_{ia}u_{ja}u_{ka}\otimes x_a^3\\
&=&\sum_{ab}(u_{ia}u_{ja}u_{kb}-u_{ia}u_{jb}u_{ka}+u_{ib}u_{ja}u_{ka})\otimes x_a^2x_b
\end{eqnarray*}

By using now the defining relations for $\bar{O}_N$, which apply to the variables $u_{ij}$, this formula can be written in a cyclic way, as follows:
$$X_iX_jX_k=\sum_{ab}(u_{ia}u_{ja}u_{kb}+u_{ja}u_{ka}u_{ib}+u_{ka}u_{ia}u_{jb})\otimes x_a^2x_b$$

We use now the fact, coming from \cite{ban}, that the variables on the right $x_a^2x_b$ are linearly independent. We conclude that, in order for our quantum group $G\subset\bar{O}_N$ to act on $\bar{S}^{N-1,1}_\mathbb R$, its coordinates must satisfy the following relations, for any $i,j,k$ distinct:
$$u_{ia}u_{ja}u_{kb}+u_{ja}u_{ka}u_{ib}+u_{ka}u_{ia}u_{jb}=0$$

By multiplying to the right by $u_{kb}$ and then by summing over $b$, we deduce from this that we have $u_{ia}u_{ja}=0$, for any $i,j$. Now since the quotient of $C(\bar{O}_N)$ by these latter relations is $C(H_N)$, we conclude that we have $G^+(\bar{S}^{N-1,1}_\mathbb R)=H_N$, as claimed.

(3) At $d=N$ the result is already known from \cite{ban}, and its proof follows in fact from the ``twisted trick'' explained in the proof of (2) above, applied to $\bar{S}^{N-1}_\mathbb R$.
\end{proof}

Observe that the results that we have so far, namely those in \cite{ban} and in Proposition 3.3 and Theorem 3.5 above, give us the quantum isometry groups of 8 of the 9 spheres in Theorem 1.6. The sphere left, $S^{N-1,1}_{\mathbb R,*}$, will be investigated in the next two sections.

In the context of Theorem 3.5, we do not know what happens at $d=3,\ldots,N-1$. It is easy to see that the hyperoctahedral group $H_N$ acts on any polygonal sphere $\bar{S}^{N-1,d-1}_\mathbb R$, and our conjecture would be that this action is the universal one.

\section{Hyperoctahedral groups}

As explained above, our main objective now will be that of computing the quantum isometry group of $S^{N-1,1}_{\mathbb R,*}$. The computation is quite non-trivial, and requires a number of quantum group preliminaries, that we will develop in this section.

We recall from \cite{bcs}, \cite{bve} that the quantum group $O_N^*\subset O_N^+$ is obtained by imposing the half-commutation relations $abc=cba$ to the standard coordinates $u_{ij}$. This quantum group has a twist $\bar{O}_N^*$, constructed in \cite{ban}, whose definition is as follows:

\begin{definition}
$\bar{O}_N^*\subset O_N^+$ is the quantum group obtained by imposing the relations
$$abc=\begin{cases}
-cba&{\rm for\ }r\leq2,s=3{\rm\ or\ }r=3,s\leq2\\
cba&{\rm for\ }r\leq2,s\leq 2{\rm\ or\ }r=s=3
\end{cases}$$
where $r,s\in\{1,2,3\}$ are the number of rows/columns of $u$ spanned by $a,b,c\in\{u_{ij}\}$.
\end{definition}

In order to deal with $\bar{O}_N^*$, it is useful to keep in mind the following table, encoding the choice of the above half-commutation/half-anticommutation signs:
$$\begin{matrix}
r\backslash s&1&2&3\\
1&+&+&-\\
2&+&+&-\\
3&-&-&+
\end{matrix}$$

We have intersected twisted and untwisted spheres in section 2 above, and we will do the same now for the corresponding orthogonal groups. We have here:

\begin{proposition}
The main $5$ quantum groups, and the intersections between them, are
$$\xymatrix@R=12mm@C=16mm{
O_N\ar[r]&O_N^*\ar[r]&O_N^+\\
H_N\ar[r]\ar[u]&H_N^*\ar[r]\ar[u]&\bar{O}_N^*\ar[u]\\
H_N\ar[r]\ar[u]&H_N\ar[r]\ar[u]&\bar{O}_N\ar[u]}$$
at $N\geq 3$. At $N=2$ the same holds, with the lower left square being $\left[{\,}^{O_2^{\ }}_{H_2^{\ }}{\ }^{O_2^+}_{\bar{O}_2}\right]$.
\end{proposition}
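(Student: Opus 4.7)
The proof proceeds by computing each of the four intersections in the lower-left $2\times 2$ block of the diagram, obtained by imposing both sets of defining relations on the standard coordinates $u_{ij}\in C(O_N^+)$. The guiding principle is the same as in Proposition 1.2: combining an untwisted commutation-type relation with a twisted anticommutation-type relation on the same monomial forces that monomial to vanish, after which one must identify the resulting quantum group.

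The easy inclusions $\supseteq$ are straightforward: in the signed permutation matrices forming $H_N$, any two distinct entries on the same row or column are zero, so $H_N$ trivially satisfies each of the four combined sets of relations. Similarly $H_N^*$ satisfies the half-commutation relations together with the twisted half-commutative ones on mixed-signature triples, so it sits inside $O_N^*\cap\bar{O}_N^*$.

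For the reverse inclusions, I compute case by case. \textbf{(a)} For $O_N\cap\bar{O}_N$, combining $ab=ba$ with $ab=-ba$ on same row/column yields $ab=0$ there, and together with commutativity this gives exactly $H_N$. \textbf{(b)} For $O_N\cap\bar{O}_N^*$, the relations $abc=cba$ from $O_N$ and $abc=-cba$ from $\bar{O}_N^*$ on triples with mixed $(r,s)$-signature force $abc=0$ on such triples. I then plan to use the row/column normalization $\sum_k u_{ik}^2=1$, valid for $N\geq 3$ (which supplies a third distinct row or column), to upgrade these triple-vanishings to the pair-vanishing relations $u_{ij}u_{ik}=0$ defining $H_N$. \textbf{(c)} For $O_N^*\cap\bar{O}_N$, picking three distinct coordinates $a,b,c$ on the same row and iterating the anticommutation $ab=-ba$ of $\bar{O}_N$ yields $abc=-cba$; combining with $abc=cba$ from $O_N^*$ again produces $abc=0$, and a normalization argument then gives $H_N$. \textbf{(d)} For $O_N^*\cap\bar{O}_N^*$, the direct combination of half-commutation and twisted half-commutation produces $abc=0$ exactly on the mixed-signature triples, which together with $abc=cba$ is precisely the presentation of $H_N^*$ from \cite{bcs}.

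The main obstacle will be the row/column normalization arguments in \textbf{(b)} and \textbf{(c)}, where one must carefully expand $u_{ij}u_{ik}\cdot 1=\sum_\ell u_{ij}u_{ik}u_{m\ell}^2$ (for $m\neq i$) and check that each resulting quadruple contains a vanishing triple; equally, one must verify in \textbf{(d)} that the combined relations match the defining presentation of $H_N^*$ and not a strictly smaller or larger quantum group. For $N=2$, these normalization arguments fail, and moreover no three entries of a $2\times 2$ matrix can span three distinct rows or columns, so the twisted half-commutation conditions distinguishing $\bar{O}_2^*$ from $O_2^*$ are vacuous; tracking through the cases yields $\bar{O}_2^*=O_2^*=O_2^+$ and produces the stated exceptional square $\left[\begin{smallmatrix}O_2&O_2^+\\H_2&\bar{O}_2\end{smallmatrix}\right]$.
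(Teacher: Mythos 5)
Your case (a) matches the paper, and your case (b) is essentially the paper's argument (the paper phrases it more simply: $O_N\cap\bar O_N^*\subset O_N$ is a classical group, so one picks a matrix $U\in O_N$, assumes $U_{11},U_{12}\neq 0$, and uses $U_{11}U_{12}U_{i3}=0$ for all $i$ to kill the third column; the $\sum_k u_{ik}^2=1$ bookkeeping you propose is a more cumbersome way of saying the same thing, since the entries commute). However, cases (c) and (d) have genuine gaps.

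In (c), your relation $abc=0$ for $a,b,c$ distinct and on a common row is far from enough: you would need $ab=0$ for $a\neq b$ on a common row or column, and also that the quantum group is classical, before you can conclude $H_N$. Your suggested normalization $ab=\sum_c abc^2$ (sum over the row) leaves the terms $aba^2$ and $abb^2$, which do not vanish from the relations you have. The paper circumvents this by first settling case (d) and then observing that $O_N^*\cap\bar O_N\subset O_N^*\cap\bar O_N^*=H_N^*$; on $H_N^*$ one already has $ab=0$ on rows and columns, which trivializes the anticommutation relations of $\bar O_N$ and forces the group into $O_N$, whence $H_N^*\cap\bar O_N=H_N^*\cap\bar O_N\cap O_N=H_N$.

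Case (d) is where the real content lies, and it is not a matter of checking that combined relations match a presentation. The combination of $abc=cba$ with $abc=-cba$ on mixed-signature triples only gives $abc=0$ on those triples; it does not directly yield the pair-vanishing relations $ab=0$ defining $H_N^*$ inside $O_N^*$, and the same normalization obstacle as in (c) appears, now in a genuinely noncommutative setting where no reduction to $O_N$ is available. The paper instead invokes the Bichon--Dubois-Violette classification of half-classical quantum groups from \cite{bdu}: since $H_N^\times:=O_N^*\cap\bar O_N^*$ is half-classical and contains $H_N^*$ (hence non-classical), it must arise from an intermediate compact group $\mathbb T\subset G\subset U_N$, which one then computes by the (easier, classical) method of case (b) applied to $U_N\cap\bar U_N^{**}$, recovering $\mathbb T\wr S_N$ at $N\geq 3$ and hence $H_N^*$. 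Without this structure theorem, or some equally non-trivial substitute, your sketch does not close.
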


\begin{proof}
We have to study 4 quantum group intersections, as follows:

(1) $O_N\cap\bar{O}_N$. Here an element $U\in O_N$ belongs to the intersection when its entries satisfy $ab=0$ for any $a\neq b$ on the same row or column of $U$. But this means that our matrix $U\in O_N$ must be monomial, and so we get $U\in H_N$, as claimed.

(2) $O_N\cap\bar{O}_N^*$. At $N=2$ the defining relations for $\bar{O}_N^*$ dissapear, and so we have $O_2\cap\bar{O}_2^*=O_2\cap O_2^+=O_2$, as claimed. At $N\geq3$ now, the inclusion $H_N\subset O_N\cap\bar{O}_N^*$ is clear. In order to prove the converse inclusion, pick $U\in O_N$ in the intersection, and assume that $U$ is not monomial. By permuting the entries we can further assume $U_{11}\neq0,U_{12}\neq0$, and from $U_{11}U_{12}U_{i3}=0$ for any $i$ we deduce that the third column of $U$ is filled with $0$ entries, a contradiction. Thus we must have $U\in H_N$, as claimed.

(3) $O_N^*\cap\bar{O}_N$. At $N=2$ we have $O_2^*\cap\bar{O}_2=O_2^+\cap\bar{O}_2=\bar{O}_2$, as claimed. At $N\geq3$ now, best is to use the result in (4) below. Indeed, knowing $O_N^*\cap\bar{O}_N^*=H_N^*$, our intersection is then $G=H_N^*\cap\bar{O}_N$. Now since the standard coordinates on $H_N^*$ satisfy $ab=0$ for $a\neq b$ on the same row or column of $u$, the commutation/anticommutation relations defining $\bar{O}_N$ reduce to plain commutation relations. Thus $G$ follows to be classical, $G\subset O_N$, and by using (1) above we obtain $G=H_N^*\cap\bar{O}_N\cap O_N=H_N^*\cap H_N=H_N$, as claimed.

(4) $O_N^*\cap\bar{O}_N^*$. The result here is non-trivial, and we must use technology from \cite{bdu}. The quantum group $H_N^\times=O_N^*\cap\bar{O}_N^*$ is indeed half-classical in the sense of \cite{bdu}, and since we have $H_N^*\subset H_N^\times$, this quantum group is not classical. Thus the main result in \cite{bdu} applies, and shows that $H_N^\times\subset O_N^*$ must come, via the crossed product construction there, from an intermediate compact group $\mathbb T\subset G\subset U_N$. Now observe that the standard coordinates on $H_N^\times$ are by definition subject to the conditions $abc=0$ when $(r,s)=(\leq2,3),(3,\leq2)$, with the conventions in Definition 4.1 above. It follows that the standard coordinates on $G$ are subject to the conditions $\alpha\beta\gamma=0$ when $(r,s)=(\leq2,3),(3,\leq2)$, where $r,s=span(a,b,c)$ as in Definition 4.1, and $\alpha=a,a^*,\beta=b,b^*,\gamma=c,c^*$. Thus we have $G\subset\bar{U}_N^{**}$, where $\bar{U}_N^{**}$ is the twisted half-liberated version of $U_N$ constructed in \cite{ban}.

We deduce from this that we have $G\subset K_N^\circ$, where $K_N^\circ=U_N\cap\bar{U}_N^{**}$. But this intersection can be computed exactly as in the real case, in the proof of (2) above, and we obtain $K_2^\circ=U_2$, and $K_N^\circ=\mathbb T\wr S_N$ at $N\geq 3$. But the half-liberated quantum groups obtained from $U_2$ and $\mathbb T\wr S_N$ via the construction in \cite{bdu} are well-known, these being $O_2^*=O_2^+$ and $H_N^*$. Thus by functoriality we have $H_2^\times\subset O_2^+$ and $H_N^\times\subset H_N^*$ at $N\geq 3$, and since the reverse inclusions are clear, we obtain $H_2^\times=O_2^+$ and $H_N^\times=H_N^*$ at $N\geq 3$, as claimed.
\end{proof}

Observe that the diagram in Proposition 4.2 is not exactly the quantum isometry group diagram from the introduction. In order to evolve now towards that diagram, we must first introduce a new quantum group, $H_N^{[\infty]}$. This quantum group was constructed in \cite{bcs}, and its main properties, worked out in \cite{bcs}, \cite{rw1}, \cite{rw2}, can be summarized as follows:

\begin{proposition}
Let $H_N^{[\infty]}\subset O_N^+$ be the compact quantum group obtained via the relations $abc=0$, whenever $a\neq c$ are on the same row or column of $u$. 
\begin{enumerate}
\item We have inclusions $H_N^*\subset H_N^{[\infty]}\subset H_N^+$.

\item We have $ab_1\ldots b_rc=0$, whenever $a\neq c$ are on the same row or column of $u$.

\item We have $ab^2=b^2a$, for any two entries $a,b$ of $u$.
\end{enumerate}
\end{proposition}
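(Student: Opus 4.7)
The plan is to establish the three assertions in the logical order (1), then (3), then (2), since the generalized vanishing in (2) becomes straightforward once the centrality of squares from (3) is in hand.

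For part (1) I would handle the two inclusions separately. The inclusion $H_N^{[\infty]}\subset H_N^+$ follows by specializing the defining relation $abc=0$ to $c=b$ (with $a\neq b$ on the same row or column of $u$), giving $ab^2=0$; then $(ab)(ab)^* = ab^2a = 0$ forces $ab=0$, which are the defining relations of $H_N^+$. For $H_N^*\subset H_N^{[\infty]}$, take any entries $a\neq c$ on the same row or column, and any $b$. Using that $H_N^*$ is hyperoctahedral ($ac=0$) and half-classical (so $y^2$ is central for each generator $y$, a consequence of $xyz=zyx$ with $y=z$), I compute
\[
(abc)(abc)^* = abc^2ba = a^2b^2c^2.
\]
Since $ac=0$ implies $a^2c^2 = a(ac)c = 0$, centrality yields $a^2b^2c^2 = 0$, hence $abc=0$.

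For part (3), the plan is to combine the defining relation with the row orthogonality $\sum_{j'} u_{ij'}^2 = 1$ inherited from $O_N^+$. Fix $a = u_{kl}$ and $b = u_{ij}$. For each $j'\neq j$ the triple $u_{ij'}\cdot a\cdot b$ has outer entries in the same row $i$ and distinct, so the defining relation gives $u_{ij'}ab=0$, hence $u_{ij'}^2 ab=0$. Summing over $j'\neq j$ collapses to $(1-b^2)ab=0$, i.e.\ $ab=b^2ab$, and right multiplication by $b$ yields $ab^2 = b^2ab^2$. The analogous argument applied to $ba\cdot u_{ij'}$ gives $ba=bab^2$, and left multiplication by $b$ yields $b^2a = b^2ab^2$. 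Comparing the two displays yields $ab^2 = b^2a$.

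Part (2) then falls out from the usual ``$xx^*$ trick.'' The computation
\[
(ab_1\cdots b_r c)(ab_1\cdots b_r c)^* = ab_1\cdots b_r c^2 b_r\cdots b_1 a,
\]
combined with part (3), lets me pull $c^2$ out to the left, collapse the adjacent pair $b_rb_r = b_r^2$, pull it out, and iterate inward until only $a\cdot a = a^2$ remains. The result is $a^2 b_1^2\cdots b_r^2 c^2$, which vanishes because $ac=0$ from part (1) gives $a^2c^2 = 0$, and centrality distributes the vanishing. Hence $ab_1\cdots b_r c = 0$. The hard step, I expect, is finding the short proof of (3): the defining relation provides only a single family of triple vanishings, and it is not a priori obvious that pairing it with row orthogonality should force $b^2$ to commute past a generic entry of $u$; but once the correct multiplication and summation are spotted, the derivation is just a couple of lines, and the remaining parts are then largely mechanical.
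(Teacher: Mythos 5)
Your proof is correct, and it takes a genuinely different route from the paper's. The paper proves all three parts via the ``easy quantum group'' diagrammatic machinery: it exhibits $H_N^{[\infty]}$ as the quantum group associated to a specific three-point partition $\pi$, derives the inclusions in (1) by composing diagrams, obtains the longer vanishing relations in (2) by composing $\pi$ with itself, and, most notably, proves (3) by \emph{rotating} $\pi$ to a new partition $\sigma$, computing the condition $T_\sigma\in\mathrm{End}(u^{\otimes 3})$, and specializing indices. Your approach is entirely within the $C^*$-algebra: for (1) you use the $xx^*$ trick together with the elementary identities $ac=0$ and centrality of squares in $H_N^*$; for (3) you cleverly multiply the defining triple-vanishing by $u_{ij'}$, sum over $j'\neq j$ using row orthogonality $\sum_{j'}u_{ij'}^2=1$, and compare the two resulting identities $ab^2=b^2ab^2$ and $b^2a=b^2ab^2$; for (2) you again use $xx^*$ plus the already-established centrality of squares to collapse the palindromic middle. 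What your approach buys is elementary self-containment — no appeal to Tannaka--Krein duality, diagram rotations, or the easiness of $H_N^{[\infty]}$ — and the ``multiply by $u_{ij'}$ and sum'' trick for (3) is a nice purely algebraic substitute for the paper's rotation argument. What the paper's approach buys is a structural explanation (the endomorphisms come from concrete partitions), which is what the later sections actually exploit, and it aligns with the existing literature \cite{bcs}, \cite{rw1}, \cite{rw2}. Both are valid; yours could be profitably included as an alternative direct proof.
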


\begin{proof}
We briefly recall the proof in \cite{bcs}, \cite{rw1}, \cite{rw2}, for future use in what follows. Our first claim is that $H_N^{[\infty]}$ comes, as an easy quantum group, from the following diagram:
$$\xymatrix@R=5mm@C=0.1mm{&\\\pi\ \ =\\&}\xymatrix@R=5mm@C=5mm{
\circ\ar@{-}[dd]&\circ\ar@{.}[dd]&\circ\ar@{-}[dd]\\
\ar@{-}[rr]&&\\
\circ&\circ&\circ}$$

Indeed, since this diagram acts via the map $T_\pi(e_{ijk})=\delta_{ik}e_{ijk}$, we obtain:
\begin{eqnarray*}
T_\pi u^{\otimes 3}e_{abc}&=&T_\pi\sum_{ijk}e_{ijk}\otimes u_{ia}u_{jb}u_{kc}=\sum_{ijk}e_{ijk}\otimes\delta_{ik}u_{ia}u_{jb}u_{kc}\\
u^{\otimes 3}T_\pi e_{abc}&=&u^{\otimes 3}\delta_{ac}e_{abc}=\sum_{ijk}e_{ijk}\otimes\delta_{ac}u_{ia}u_{jb}u_{kc}
\end{eqnarray*}

Thus $T_\pi\in End(u^{\otimes 3})$ is equivalent to $(\delta_{ik}-\delta_{ac})u_{ia}u_{jb}u_{kc}=0$. The non-trivial cases are $i=k,a\neq c$ and $i\neq k,a=c$, and these produce the relations $u_{ia}u_{jb}u_{ic}=0$ for any $a\neq c$, and $u_{ia}u_{jb}u_{ka}=0$, for any $i\neq k$. Thus, we have reached to the relations for $H_N^{[\infty]}$.

(1) The fact that we have inclusions $H_N^*\subset H_N^{[\infty]}\subset H_N^+$ comes from:
$$\xymatrix@R=5mm@C=5mm{
\circ\ar@{-}[ddrr]&\circ\ar@{-}[dd]&\circ\ar@{-}[ddll]\ar@/^/@{.}[r]&\circ\ar@{-}[dd]&\circ\ar@{-}[dd]\\
&&&\ar@{-}[r]&\\
\circ&\circ&\circ\ar@/_/@{.}[r]&\circ&\circ}\ \ \xymatrix@R=5mm@C=1mm{&\\=\\&}\xymatrix@R=5mm@C=5mm{
\circ\ar@{-}[dd]&\circ\ar@{.}[dd]&\circ\ar@{-}[dd]\\
\ar@{-}[rr]&&\\
\circ&\circ&\circ}
\quad\xymatrix@R=5mm@C=1mm{&\\,\\&}\quad
\xymatrix@R=5mm@C=5mm{
\circ\ar@{-}[dd]&\circ\ar@{.}[dd]\ar@/^/@{.}[r]&\circ\ar@{-}[dd]\\
\ar@{-}[rr]&&\\
\circ&\circ\ar@/_/@{.}[r]&\circ}\ \ \xymatrix@R=5mm@C=1mm{&\\=\\&}\xymatrix@R=5mm@C=5mm{
\circ\ar@{-}[dd]&\circ\ar@{-}[dd]\\
\ar@{-}[r]&&\\
\circ&\circ}$$

(2) At $r=2$, the relations $ab_1b_2c=0$ come indeed from the following diagram:
$$\xymatrix@R=5mm@C=5mm{
\circ\ar@{-}[dd]&\circ\ar@{.}[dd]&\circ\ar@{-}[dd]\ar@/^/@{.}[r]&\circ\ar@{-}[dd]&\circ\ar@{.}[dd]&\circ\ar@{-}[dd]\\
\ar@{-}[rr]&&&\ar@{-}[rr]&&\\
\circ&\circ&\circ\ar@/_/@{.}[r]&\circ&\circ&\circ}
\ \ \xymatrix@R=5mm@C=1mm{&\\=\\&}
\xymatrix@R=5mm@C=5mm{
\circ\ar@{-}[dd]&\circ\ar@{.}[dd]&\circ\ar@{.}[dd]&\circ\ar@{-}[dd]\\
\ar@{-}[rrr]&&&\\
\circ&\circ&\circ&\circ}$$

In the general case $r\geq2$ the proof is similar, see \cite{bcs} for details.

(3) We use here an idea from \cite{rw1}, \cite{rw2}. By rotating $\pi$, we obtain:
$$\xymatrix@R=5mm@C=5mm{
\circ\ar@{-}[dd]&\circ\ar@{.}[dd]&\circ\ar@{-}[dd]\\
\ar@{-}[rr]&&\\
\circ&\circ&\circ}
\ \ \xymatrix@R=5mm@C=0.1mm{&\\ \to\\&}\ 
\xymatrix@R=5mm@C=5mm{
\circ\ar@{-}[dd]&\circ\ar@{.}[dd]&\\
\ar@{-}[rrr]&&\ar@{-}[d]&\ar@{-}[d]\\
\circ&\circ&\circ&\circ}
\ \ \xymatrix@R=5mm@C=0.1mm{&\\ \to\\&}\ 
\xymatrix@R=5mm@C=5mm{
\circ\ar@{-}[d]&\circ\ar@{-}[dd]&\circ\ar@{.}[ddll]\\
\ar@{-}[rr]&&\ar@{-}[d]\\
\circ&\circ&\circ}$$

Let us denote by $\sigma$ the partition on the right. Since $T_\sigma(e_{ijk})=\delta_{ij}e_{kji}$, we obtain:
\begin{eqnarray*}
T_\sigma u^{\otimes 3}e_{abc}&=&T_\sigma\sum_{ijk}e_{ijk}\otimes u_{ia}u_{jb}u_{kc}=\sum_{ijk}e_{kji}\otimes\delta_{ij}u_{ia}u_{jb}u_{kc}\\
u^{\otimes 3}T_\sigma e_{abc}&=&u^{\otimes 3}\delta_{ab}e_{cba}=\sum_{ijk}e_{kji}\otimes\delta_{ab}u_{kc}u_{jb}u_{ia}
\end{eqnarray*}

Thus $T_\sigma\in End(u^{\otimes 3})$ is equivalent to $\delta_{ij}u_{ia}u_{jb}u_{kc}=\delta_{ab}u_{kc}u_{jb}u_{ia}$, and now by setting $j=i,b=a$ we obtain the commutation relation $u_{ia}^2u_{kc}=u_{kc}u_{ia}^2$ in the statement.
\end{proof}

The relation of $H_N^{[\infty]}$ with the polygonal spheres comes from the following fact:

\begin{proposition}
Let $X\subset S^{N-1}_{\mathbb R,+}$ be closed, let $d\geq2$, and set $X^{d-1}=X\cap S^{N-1,d-1}_{\mathbb R,+}$. Then for a quantum group $G\subset H_N^{[\infty]}$ the following are equivalent:
\begin{enumerate}
\item $x_i\to\sum_ju_{ij}\otimes x_j$ defines a coaction $\Phi:C(X^{d-1})\to C(G)\otimes C(X^{d-1})$.

\item $x_i\to\sum_ju_{ij}\otimes x_j$ defines a morphism $\widetilde{\Phi}:C(X)\to C(G)\otimes C(X^{d-1})$.
\end{enumerate} 
In particular, $G^+(X)\cap H_N^{[\infty]}$ acts on $X^{d-1}$, for any $d\geq2$.
\end{proposition}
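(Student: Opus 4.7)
The direction (1) $\Rightarrow$ (2) is free: if $\Phi : C(X^{d-1}) \to C(G)\otimes C(X^{d-1})$ is a coaction as in (1), then composing with the canonical quotient $q: C(X)\twoheadrightarrow C(X^{d-1})$ (which exists because $X^{d-1}\subset X$ is cut out by the extra relations $x_{i_0}\ldots x_{i_d}=0$) yields a morphism $\widetilde\Phi=\Phi\circ q$ satisfying $\widetilde\Phi(x_i)=\sum_j u_{ij}\otimes x_j$, as required.

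The substance of the proposition is the converse (2) $\Rightarrow$ (1). Given $\widetilde\Phi:C(X)\to C(G)\otimes C(X^{d-1})$ defined on generators by $x_i\mapsto\sum_j u_{ij}\otimes x_j$, the point is to check that $\widetilde\Phi$ annihilates the polygonal relations, i.e.\ that for any $i_0,\ldots,i_d$ distinct one has
$$\widetilde\Phi(x_{i_0}\ldots x_{i_d})=\sum_{j_0,\ldots,j_d} u_{i_0j_0}\ldots u_{i_dj_d}\otimes x_{j_0}\ldots x_{j_d}=0.$$
Once this is established, $\widetilde\Phi$ factors through the quotient $C(X^{d-1})$, giving the desired $\Phi$; coassociativity and counitality then come for free from the formula in terms of the $u_{ij}$.

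The key step is a case analysis on the tuple $(j_0,\ldots,j_d)$. If the $j_r$ are all distinct, then $x_{j_0}\ldots x_{j_d}=0$ already in $C(X^{d-1})$, so such terms vanish. If instead some $j_s=j_t$ with $s<t$, then $u_{i_sj_s}$ and $u_{i_tj_t}$ lie in the same column of $u$; since $i_s\neq i_t$ (the indices $i_0,\ldots,i_d$ being distinct), these two generators are distinct. Here is where the assumption $G\subset H_N^{[\infty]}$ enters: by Proposition 4.3(2), any product of the form $ab_1\ldots b_rc$ with $a\neq c$ on the same row or column of $u$ vanishes. Applying this to the subword $u_{i_sj_s}u_{i_{s+1}j_{s+1}}\ldots u_{i_tj_t}$ kills the entire product $u_{i_0j_0}\ldots u_{i_dj_d}$. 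This is the only delicate point of the proof, and it is precisely what the relations of $H_N^{[\infty]}$ are designed to do; the assumption $d\geq 2$ ensures that the tuples have length $d+1\geq 3$, matching the format of Proposition 4.3(2).

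For the final assertion, apply the equivalence with $G=G^+(X)\cap H_N^{[\infty]}$. The universal coaction $C(X)\to C(G^+(X))\otimes C(X)$ restricts to a morphism $C(X)\to C(G)\otimes C(X)$, and postcomposing the target with $\mathrm{id}\otimes q$ produces a morphism $\widetilde\Phi:C(X)\to C(G)\otimes C(X^{d-1})$ of the form required by (2); the equivalence then upgrades it to a genuine coaction of $G$ on $X^{d-1}$, as claimed.
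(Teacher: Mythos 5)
Your proof is correct and follows essentially the same route as the paper: $(1)\Rightarrow(2)$ by composing with the quotient $C(X)\to C(X^{d-1})$, and $(2)\Rightarrow(1)$ by splitting $\sum_{j_0\ldots j_d}u_{i_0j_0}\ldots u_{i_dj_d}\otimes x_{j_0}\ldots x_{j_d}$ into tuples with all $j$'s distinct (killed on the right in $C(X^{d-1})$) and tuples with a repeat (killed on the left by the $H_N^{[\infty]}$ relations). One small imprecision: your remark that $d\geq2$ makes the tuple length ``match the format of Proposition 4.3(2)'' doesn't quite address the case where the repeated indices are adjacent, $t=s+1$, since then the subword $u_{i_sj_s}u_{i_tj_t}$ has no middle letters; this $r=0$ case does hold, but it comes from the inclusion $H_N^{[\infty]}\subset H_N^+$ of Proposition 4.3(1) rather than from the diagrammatic argument the paper sketches for $r\geq 2$.
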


\begin{proof}
The idea here is to use the relations in Proposition 4.3 (2) above:

$(1)\implies(2)$ This is clear, by composing $\Phi$ with the projection map $C(X)\to C(X^{d-1})$. 

$(2)\implies(1)$ In order for a coaction $C(X^{d-1})\to C(G)\otimes C(X^{d-1})$ to exist, the variables $X_i=\sum_ju_{ij}\otimes x_j$ must satisfy the relations defining $X$, which hold indeed by (2), and must satisfy as well the relations $X_{i_0}\ldots X_{i_d}=0$  for $i_0,\ldots,i_d$ distinct, which define $S^{N-1,d-1}_{\mathbb R,+}$. 

The point now is that, under the assumption $G\subset H_N^{[\infty]}$, these latter relations are automatic. Indeed, by using Proposition 4.3 (2), for $i_0,\ldots,i_d$ distinct we obtain:
\begin{eqnarray*}
X_{i_0}\ldots X_{i_d}
&=&\sum_{j_0\ldots j_d}u_{i_0j_0}\ldots u_{i_dj_d}\otimes x_{j_0}\ldots x_{j_d}\\
&=&\sum_{j_0\ldots j_d\ distinct}u_{i_0j_0}\ldots u_{i_dj_d}\otimes 0\ \ +\!\!\!\!\sum_{j_0\ldots j_d\ not\ distinct}0\otimes x_{j_0}\ldots x_{j_d}\\
&=&0+0=0
\end{eqnarray*}

Thus the coaction in (1) exists precisely when (2) is satisfied, and we are done.

Finally, the last assertion is clear from $(2)\implies(1)$, because the universal coaction of $G=G^+(X)$ gives rise to a map $\widetilde{\Phi}:C(X)\to C(G)\otimes C(X^{d-1})$ as in (2).
\end{proof}

As an illustration, we have the following result:

\begin{theorem}
$H_N,H_N,H_N^*,H_N^*,H_N^{[\infty]}$ act respectively on the spheres
$$S^{N-1,d-1}_{\mathbb R},\bar{S}^{N-1,d-1}_{\mathbb R},S^{N-1,d-1}_{\mathbb R,*},\bar{S}^{N-1,d-1}_{\mathbb R,*},S^{N-1,d-1}_{\mathbb R,+}$$ 
at any $d\geq2$.
\end{theorem}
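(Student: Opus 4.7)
The plan is to apply Proposition 4.4 to each of the five cases, reducing the construction of actions on the polygonal spheres to known actions on their ``saturated'' counterparts. For a pair $(G, X^{d-1})$ where $X^{d-1} = X \cap S^{N-1,d-1}_{\mathbb R,+}$, the scheme is to verify: (a) $G \subset H_N^{[\infty]}$, and (b) $G$ acts affinely on the big sphere $X$. Condition (2) of Proposition 4.4 then follows by composing the coaction $C(X) \to C(G) \otimes C(X)$ with $id \otimes \pi$, where $\pi : C(X) \to C(X^{d-1})$ is the canonical surjection, which on generators is just $x_j \mapsto x_j$; Proposition 4.4 then upgrades this to a genuine coaction on $C(X^{d-1})$.

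The inclusions in (a) are immediate from the material already developed: Proposition 4.3(1) gives $H_N^* \subset H_N^{[\infty]}$, and the diagram in Proposition 4.2 gives $H_N \subset H_N^*$, hence $H_N \subset H_N^{[\infty]}$, while $H_N^{[\infty]} \subset H_N^{[\infty]}$ is trivial.

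For condition (b), I would take the big sphere $X$ to be, respectively, $S^{N-1}_\mathbb R$, $\bar{S}^{N-1}_\mathbb R$, $S^{N-1}_{\mathbb R,*}$, $\bar{S}^{N-1}_{\mathbb R,*}$, $S^{N-1}_{\mathbb R,+}$. The required affine actions are all visible from the known quantum isometry group diagram recalled before Proposition 3.3 together with Proposition 4.2: $H_N \subset O_N$ acts on $S^{N-1}_\mathbb R$; $H_N \subset \bar{O}_N$ acts on $\bar{S}^{N-1}_\mathbb R$; $H_N^* \subset O_N^*$ acts on $S^{N-1}_{\mathbb R,*}$; $H_N^* \subset \bar{O}_N^*$ (the key inclusion $H_N^* = O_N^* \cap \bar{O}_N^*$ coming from Proposition 4.2(4)) acts on $\bar{S}^{N-1}_{\mathbb R,*}$; and $H_N^{[\infty]} \subset O_N^+$ acts tautologically on $S^{N-1}_{\mathbb R,+}$.

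Having verified (a) and (b) in all five cases, the last assertion of Proposition 4.4 directly yields the five coactions on $X^{d-1}$ announced in the statement. The whole argument is thus a bookkeeping exercise, with all the real content already packaged into Proposition 4.3(2) (the relation $a b_1 \ldots b_r c = 0$, which is what makes the vanishing relations $X_{i_0} \ldots X_{i_d} = 0$ automatic) and into the half-classical intersection $H_N^* = O_N^* \cap \bar{O}_N^*$ of Proposition 4.2(4). Accordingly, the only step that requires a moment of attention is the $\bar{S}^{N-1,d-1}_{\mathbb R,*}$ case, since without Proposition 4.2(4) it would not be evident that $H_N^*$ sits inside $\bar{O}_N^*$ at all.
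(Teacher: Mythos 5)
Your argument is correct and follows essentially the same route as the paper: both proofs run everything through Proposition 4.4 (the fact that, under $G\subset H_N^{[\infty]}$, an affine action on $X$ descends automatically to $X^{d-1}$), with the five big spheres $S^{N-1}_\mathbb R,\bar{S}^{N-1}_\mathbb R,S^{N-1}_{\mathbb R,*},\bar{S}^{N-1}_{\mathbb R,*},S^{N-1}_{\mathbb R,+}$ serving as the ambient $X$. The only difference is one of economy: the paper actually computes the five intersections $G^+(X)\cap H_N^{[\infty]}$ on the nose, showing they equal $H_N,H_N,H_N^*,H_N^*,H_N^{[\infty]}$ (which requires a bit of work for the twisted cases, e.g.\ $\bar{O}_N\cap H_N^{[\infty]}=H_N$ and $\bar{O}_N^*\cap H_N^{[\infty]}=H_N^*$), whereas you only verify the one-sided inclusions $G\subset G^+(X)$ and $G\subset H_N^{[\infty]}$, which suffices for the existence of the action asserted in the theorem. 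The extra information from the equalities is used by the paper right after the proof, in the sharpness remark, but is not needed for the statement itself.
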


\begin{proof}
We use Proposition 4.4. We know from \cite{ban} that the quantum isometry groups at $d=N$ are respectively equal to $O_N,\bar{O}_N,O_N^*,\bar{O}_N^*,O_N^+$, and our claim is that, by intersecting with $H_N^{[\infty]}$, we obtain the quantum groups in the statement. Indeed:

(1) $O_N\cap H_N^{[\infty]}=H_N$ is clear from definitions.

(2) $\bar{O}_N\cap H_N^{[\infty]}=H_N$ follows from $\bar{O}_N\cap H_N^+\subset O_N$, which in turn follows from the computation (3) in the proof of Proposition 4.2, with $H_N^*$ replaced by $H_N^+$.

(3) $O_N^*\cap H_N^{[\infty]}=H_N^*$ follows from $O_N^*\cap H_N^+=H_N^*$.

(4) $\bar{O}_N^*\cap H_N^{[\infty]}\supset H_N^*$ is clear, and the reverse inclusion can be proved by a direct computation, similar to the computation (3) in the proof of Proposition 4.2.

(5) $O_N^+\cap H_N^{[\infty]}=H_N^{[\infty]}$ is clear from definitions.
\end{proof}

Observe that Theorem 4.5 is sharp, in the sense that the actions there are the universal ones, in the classical case at any $d\in\{2,\ldots,N-1\}$, as well as in the twisted case at $d=2$. Indeed, this follows from Proposition 3.3 and from Theorem 3.5 above.

\section{Quantum isometries}

In this section we complete the computation of the quantum isometry groups of the 9 main spheres, as to prove our main result, announced in the introduction. As already pointed out, we already have results for 8 spheres, the sphere left being $S^{N-1,1}_{\mathbb R,*}$.

We already know from Theorem 4.5 that the quantum group $H_N^*$ from \cite{bcs} acts on $S^{N-1,1}_{\mathbb R,*}$. This action, however, is not universal, because we have:

\begin{proposition}
$\widehat{\mathbb Z_2^{*N}}$ acts on $S^{N-1,1}_{\mathbb R,*}$.
\end{proposition}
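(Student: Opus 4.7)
The plan is to realize $\widehat{\mathbb Z_2^{*N}}$ as a closed quantum subgroup of $O_N^+$ via the diagonal corepresentation $u_{ij}=\delta_{ij}g_i$, where $g_1,\ldots,g_N$ are the canonical generators of $\mathbb Z_2^{*N}$. Since each $g_i$ is a self-adjoint unitary with $g_i^2=1$, the matrix $u=\mathrm{diag}(g_1,\ldots,g_N)$ is orthogonal in the sense required by $O_N^+$, and the standard Hopf algebra structure on $C^*(\mathbb Z_2^{*N})$ makes this a genuine closed quantum subgroup. It then remains to check that the formula $\Phi(x_i)=g_i\otimes x_i=\sum_j u_{ij}\otimes x_j$ extends to a $*$-homomorphism $C(S^{N-1,1}_{\mathbb R,*})\to C^*(\mathbb Z_2^{*N})\otimes C(S^{N-1,1}_{\mathbb R,*})$, the coaction axioms being then automatic from the diagonality of $u$.

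The easy relations to verify are: self-adjointness, since $\Phi(x_i)^*=g_i\otimes x_i=\Phi(x_i)$; the sphere relation, since $\sum_i\Phi(x_i)^2=\sum_i g_i^2\otimes x_i^2=1\otimes 1$; and the polygonal relation $x_{i_0}x_{i_1}x_{i_2}=0$ for $i_0,i_1,i_2$ distinct, which holds because $\Phi(x_{i_0})\Phi(x_{i_1})\Phi(x_{i_2})=g_{i_0}g_{i_1}g_{i_2}\otimes x_{i_0}x_{i_1}x_{i_2}=0$.

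The only relation with any content is the half-commutation $x_ix_jx_k=x_kx_jx_i$, which requires comparing $g_ig_jg_k\otimes x_ix_jx_k$ with $g_kg_jg_i\otimes x_kx_jx_i$. The key observation is that the polygonal relation on $S^{N-1,1}_{\mathbb R,*}$ already kills both sides when $i,j,k$ are distinct, so I would split into cases by which indices coincide. If $i=k$ (with $j$ arbitrary) the two words in $g$ are literally identical. If $i=j=k$, both sides reduce to $g_i\otimes x_i^3$. In the remaining cases $i=j\neq k$ and $j=k\neq i$, the identity $g_i^2=1$ collapses the $g$-factor to a single generator on each side, and equality reduces to $x_i^2x_k=x_kx_i^2$, which is a direct instance of half-commutation applied with a repeated index on $S^{N-1,1}_{\mathbb R,*}$. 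Thus there is no real obstacle: the proof hinges on the neat interaction between $g_i^2=1$ in $\mathbb Z_2^{*N}$ and the vanishing imposed by the polygonal condition, which together degrade the nontrivial case of the half-commutation check to a triviality.
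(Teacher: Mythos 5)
Your proof is correct and follows essentially the same approach as the paper: realize the group dual via the diagonal matrix $u_{ij}=\delta_{ij}g_i$, compute $X_iX_jX_k=g_ig_jg_k\otimes x_ix_jx_k$, observe the distinct-index case vanishes because of the polygonal relation, and dispose of the coincident-index cases via $g_i^2=1$. The only cosmetic difference is that the paper reduces the check to the group-theoretic identity $g_ig_jg_k=g_kg_jg_i$ for non-distinct triples, whereas you simplify the $g$-factor first and phrase the residual check on the sphere side ($x_i^2x_k=x_kx_i^2$); these are the same verification.
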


\begin{proof}
The standard coordinates on $S^{N-1,1}_{\mathbb R,*}$ are subject to the following relations:
$$x_ix_jx_k=\begin{cases}
0&{\rm for}\ i,j,k\ {\rm distinct}\\
x_kx_jx_i&{\rm otherwise}
\end{cases}$$

Thus, in order to have a coaction map $\Phi:C(S^{N-1,1}_{\mathbb R,*})\to C(G)\otimes C(S^{N-1,1}_{\mathbb R,*})$, given by $\Phi(x_i)=\sum_ju_{ij}\otimes x_j$, the variables $X_i=\sum_ju_{ij}\otimes x_j$ must satisfy the above relations.

For the group dual $G=\widehat{\mathbb Z_2^{*N}}$ we have by definition $u_{ij}=\delta_{ij}g_i$, where $g_1,\ldots,g_N$ are the standard generators of $\mathbb Z_2^{*N}$, and we therefore have:
$$X_iX_jX_k=g_ig_jg_k\otimes x_ix_jx_k,\qquad
X_kX_jX_i=g_kg_jg_i\otimes x_kx_jx_i$$
 
Thus the formula $X_iX_kX_k=0$ for $i,j,k$ distinct is clear, and the formula $X_iX_jX_k=X_kX_jX_i$ for $i,j,k$ not distinct requires $g_ig_jg_k=g_kg_jg_i$ for $i,j,k$ not distinct, which is clear as well. Indeed, at $i=j$ this latter relation reduces to $g_k=g_k$, at $i=k$ this relation is trivial, $g_ig_jg_i=g_ig_jg_i$, and at $j=k$ this relation reduces to $g_i=g_i$.
\end{proof}

More generally, we have the following result:

\begin{proposition}
$H_N^{[\infty]}$ acts on $S^{N-1,1}_{\mathbb R,*}$.
\end{proposition}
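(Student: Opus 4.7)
The plan is to apply Proposition 4.4 with $X = S^{N-1}_{\mathbb R,*}$ and $d = 2$, so that $X^{d-1} = S^{N-1}_{\mathbb R,*} \cap S^{N-1,1}_{\mathbb R,+} = S^{N-1,1}_{\mathbb R,*}$. By the equivalence $(1) \Leftrightarrow (2)$ in that proposition, it suffices to show that the assignment $x_i \mapsto \sum_j u_{ij} \otimes x_j$ extends to a $*$-homomorphism $\widetilde{\Phi}: C(S^{N-1}_{\mathbb R,*}) \to C(H_N^{[\infty]}) \otimes C(S^{N-1,1}_{\mathbb R,*})$. Writing $X_i = \sum_j u_{ij} \otimes x_j$, the self-adjointness and the sphere relation $\sum_i X_i^2 = 1$ are automatic from the orthogonality of $u$ in $O_N^+$, so the only content is the half-commutation $X_i X_j X_k = X_k X_j X_i$ for every triple $(i,j,k)$.

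For $i = k$ there is nothing to prove. For $i \neq k$, I would use the half-commutation $x_a x_b x_c = x_c x_b x_a$ valid over $S^{N-1,1}_{\mathbb R,*}$ to relabel the second product and write
$$X_i X_j X_k - X_k X_j X_i = \sum_{a,b,c=1}^{N} (u_{ia} u_{jb} u_{kc} - u_{kc} u_{jb} u_{ia}) \otimes x_a x_b x_c,$$
then discard the triples $(a,b,c)$ pairwise distinct by the defining vanishing of $S^{N-1,1}_{\mathbb R,+}$. This leaves the four configurations $a=b=c$, $a=c\neq b$, $a=b\neq c$, and $b=c\neq a$.

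In the first two configurations the cubic $u_{ia}u_{?}u_{ka}$ has its two endpoints in column $a$ with $i \neq k$, and is killed by the defining cubic relation of $H_N^{[\infty]}$ from Proposition 4.3 (two distinct entries on the same column cannot bracket a product of length three); the reversed monomial vanishes for the same reason. In the remaining configuration $a = b \neq c$ (and symmetrically $b = c \neq a$), the endpoints of $u_{ia}u_{ja}u_{kc}$ share neither a row nor a column, so the cubic relation does not apply directly, and I would split on the middle index $j$. If $j = i$, the monomial becomes $u_{ia}^2 u_{kc}$, which equals $u_{kc} u_{ia}^2$ by the central-square identity of Proposition 4.3(3), exactly matching the reversed term. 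If $j = k$, the factor $u_{ka} u_{kc}$ appears inside, and vanishes by the length-two row relation $u_{ka} u_{kc} = 0$ that follows from the inclusion $H_N^{[\infty]} \subset H_N^+$ of Proposition 4.3(1) (or alternatively by summing Proposition 4.3(2) at $r=2$ against orthogonality), annihilating both monomials. If $j \notin \{i, k\}$, the analogous length-two column relation $u_{ia} u_{ja} = 0$ for $i \neq j$ does the same.

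The main obstacle is the $a = b \neq c$ pattern and its mirror, because there the basic cubic relation of $H_N^{[\infty]}$ does not suffice on its own; the verification genuinely requires combining the centrality of squares from Proposition 4.3(3) with the length-two row/column relations inherited from $H_N^{[\infty]} \subset H_N^+$, and a case-split on whether the middle index $j$ coincides with $i$ or $k$. Once this pattern is handled, the other configurations fall out from the defining cubic relation, and Proposition 4.4 upgrades $\widetilde{\Phi}$ to the desired coaction $\Phi: C(S^{N-1,1}_{\mathbb R,*}) \to C(H_N^{[\infty]}) \otimes C(S^{N-1,1}_{\mathbb R,*})$.
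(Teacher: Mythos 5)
Your proof is correct. The key structural difference from the paper's is that you invoke Proposition 4.4 at the outset, so the vanishing relation $X_{i_0}X_{i_1}X_{i_2}=0$ for distinct indices is handled automatically by the general framework, and you only need to verify the half-commutation $X_iX_jX_k=X_kX_jX_i$ over the ambient sphere $S^{N-1}_{\mathbb R,*}$. The paper instead verifies both defining relations of $C(S^{N-1,1}_{\mathbb R,*})$ directly: it expands $X_iX_jX_k$ over the monomials $x_a^2x_b$, $x_ax_bx_a$, $x_a^3$ (with $a\neq b$), checks the vanishing for $i,j,k$ distinct, and then reduces the non-distinct case to $i=j$ by a symmetry argument before applying the central-square identity. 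Your case split is organized instead by the pattern of the column triple $(a,b,c)$, with a further split on the middle row index $j$ in the only non-automatic pattern $a=b\neq c$ (and its mirror). Both arguments ultimately feed in the same three ingredients from Proposition 4.3 --- the cubic relation $abc=0$ for two entries on a common row or column, the row/column orthogonality inherited from $H_N^{[\infty]}\subset H_N^+$, and the central-square identity $ab^2=b^2a$ --- and are of comparable length. Invoking Proposition 4.4 is a clean conceptual shortcut, though in this particular instance it spares relatively little computation.
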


\begin{proof}
We proceed as in the proof of Theorem 3.5 above. By expanding the formula of $X_iX_jX_k$ and by using the relations for the sphere $S^{N-1,1}_{\mathbb R,*}$, we have:
\begin{eqnarray*}
X_iX_jX_k
&=&\sum_{abc}u_{ia}u_{jb}u_{kc}\otimes x_ax_bx_c=\sum_{a,b,c\ not \ distinct}u_{ia}u_{jb}u_{kc}\otimes x_ax_bx_c\\
&=&\sum_{a\neq b}(u_{ia}u_{ja}u_{kb}+u_{ib}u_{ja}u_{ka})\otimes x_a^2x_b\\
&+&\sum_{a\neq b}u_{ia}u_{jb}u_{ka}\otimes x_ax_bx_a+\sum_au_{ia}u_{ja}u_{ka}\otimes x_a^3
\end{eqnarray*}

Now by assuming $G=H_N^{[\infty]}$, and by using the various formulae in Proposition 4.3 above, we obtain, for any $i,j,k$ distinct:
$$X_iX_jX_k=\sum_{a\neq b}(0\cdot u_{kb}+u_{ib}\cdot 0)\otimes x_a^2x_b+\sum_{a\neq b}0\otimes x_ax_bx_a+\sum_a(0\cdot u_{ka})\otimes x_a^3=0$$

It remains to prove that we have $X_iX_jX_k=X_kX_jX_i$, for $i,j,k$ not distinct. By replacing $i\leftrightarrow k$ in the above formula of $X_iX_jX_k$, we obtain:
\begin{eqnarray*}
X_kX_jX_i
&=&\sum_{a\neq b}(u_{ka}u_{ja}u_{ib}+u_{kb}u_{ja}u_{ia})\otimes x_a^2x_b\\
&+&\sum_{a\neq b}u_{ka}u_{jb}u_{ia}\otimes x_ax_bx_a+\sum_au_{ka}u_{ja}u_{ia}\otimes x_a^3
\end{eqnarray*}

Let us compare this formula with the above formula of $X_iX_jX_k$. The last sum being 0 in both cases, we must prove that for any $i,j,k$ not distinct and any $a\neq b$ we have:
$$u_{ia}u_{ja}u_{kb}+u_{ib}u_{ja}u_{ka}=u_{ka}u_{ja}u_{ib}+u_{kb}u_{ja}u_{ia}$$
$$u_{ia}u_{jb}u_{ka}=u_{ka}u_{jb}u_{ia}$$

By symmetry the three cases $i=j,i=k,j=k$ reduce to two cases, $i=j$ and $i=k$. The case $i=k$ being clear, we are left with the case $i=j$, where we must prove:
$$u_{ia}u_{ia}u_{kb}+u_{ib}u_{ia}u_{ka}=u_{ka}u_{ia}u_{ib}+u_{kb}u_{ia}u_{ia}$$
$$u_{ia}u_{ib}u_{ka}=u_{ka}u_{ib}u_{ia}$$

By using $a\neq b$, the first equality reads $u_{ia}^2u_{kb}+0\cdot u_{ka}=u_{ka}\cdot 0+u_{kb}u_{ia}^2$, and since by Proposition 4.3 (3) we have $u_{ia}^2u_{kb}=u_{kb}u_{ia}^2$, we are done. As for the second equality, this reads $0\cdot u_{ka}=u_{ka}\cdot 0$, which is true as well, and this ends the proof.
\end{proof}

We will prove now that the action in Proposition 5.2 is universal. In order to do so, we need to convert the formulae of type $X_iX_jX_k=0$ and $X_iX_jX_k=X_kX_jX_i$ into relations between the quantum group coordinates $u_{ij}$, and this requires a good knowledge of the linear relations between the variables $x_a^2x_b,x_ax_bx_a,x_a^3$ over the sphere $S^{N-1,1}_{\mathbb R,*}$.

So, we must first study these variables. The answer here is given by:

\begin{lemma}
The variables $\{x_a^2x_b,x_ax_bx_a,x_a^3|a\neq b\}$ are linearly independent over the sphere $S^{N-1,1}_{\mathbb R,*}$.
\end{lemma}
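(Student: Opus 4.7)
My plan is to transport the question into a commutative setting via the standard $M_2$-model of the half-liberated sphere, and then conclude by Fourier analysis on a torus slice. Consider the $*$-algebra morphism
$$\Psi:C(S^{N-1,1}_{\mathbb R,*})\to M_2\bigl(C(S^{N-1,1}_{\mathbb C})\bigr),\qquad x_i\mapsto\begin{pmatrix}0&z_i\\ \bar z_i&0\end{pmatrix},$$
where $z_1,\ldots,z_N$ are the standard coordinates on the complex polygonal sphere. First I would verify that $\Psi$ is well-defined: the images are self-adjoint, $\sum_i\Psi(x_i)^2=(\sum_i|z_i|^2)I=I$, the half-commutation $\Psi(x_ix_jx_k)=\Psi(x_kx_jx_i)$ follows from the commutativity of the $z_i$, and the vanishing $\Psi(x_ix_jx_k)=0$ for $i,j,k$ distinct is automatic, since at most two of the $z_i$ are simultaneously nonzero on $S^{N-1,1}_{\mathbb C}$.

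A direct computation yields the upper-right entries
$$\Psi(x_a^2x_b)_{12}=|z_a|^2z_b,\qquad \Psi(x_ax_bx_a)_{12}=z_a^2\bar z_b,\qquad \Psi(x_a^3)_{12}=|z_a|^2z_a.$$
Suppose a linear dependence $\sum_{a\neq b}\lambda_{ab}x_a^2x_b+\sum_{a\neq b}\mu_{ab}x_ax_bx_a+\sum_a\nu_ax_a^3=0$ holds in $C(S^{N-1,1}_{\mathbb R,*})$. Applying $\Psi$ and reading the $(1,2)$-entry yields an identity of continuous functions on $S^{N-1,1}_{\mathbb C}$. Fix any pair $a_0\neq b_0$ and restrict to the slice $z_{a_0}=re^{i\theta_a}$, $z_{b_0}=se^{i\theta_b}$, $z_k=0$ otherwise, $r^2+s^2=1$. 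The surviving terms are supported on the four distinct $\mathbb T^2$-characters $e^{i\theta_a}$, $e^{i\theta_b}$, $e^{i(2\theta_a-\theta_b)}$, $e^{i(2\theta_b-\theta_a)}$.

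Separating by character, the last two classes contain a single term each and force $\mu_{a_0b_0}=\mu_{b_0a_0}=0$. The first two give radial identities of the form $s^2\lambda_{b_0a_0}+r^2\nu_{a_0}=0$ and $r^2\lambda_{a_0b_0}+s^2\nu_{b_0}=0$ for all $(r,s)$ on the unit circle, and the linear independence of $r^2$ and $s^2=1-r^2$ forces all four coefficients to vanish. Ranging over all pairs $\{a_0,b_0\}$ exhausts every $\lambda$, $\mu$, and $\nu$. The only delicate step is the well-definedness check for $\Psi$ at the polygonal level; once that is in place, the rest is a routine character computation.
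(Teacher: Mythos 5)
Your proposal is correct and uses essentially the same approach as the paper: the $M_2$-matrix model $x_i\mapsto\left(\begin{smallmatrix}0&z_i\\\bar z_i&0\end{smallmatrix}\right)$ over $S^{N-1,1}_{\mathbb C}$ (a trick the paper attributes to Bichon--Dubois-Violette), reading off the $(1,2)$-entries $|z_a|^2z_b$, $z_a^2\bar z_b$, $|z_a|^2z_a$, and reducing to linear independence of these commutative monomials on a two-coordinate slice. The only difference is that you spell out the final step by a $\mathbb T^2$-character separation and a radial identity, where the paper simply asserts the independence of the six relevant monomials over $S^1_{\mathbb C}$; your extra detail is a correct and welcome justification of that assertion, not a different argument.
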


\begin{proof}
We use a trick from \cite{bdu}. Consider the 1-dimensional polygonal version of the complex sphere $S^{N-1}_\mathbb C$, which is by definition given by:
$$S^{N-1,1}_\mathbb C=\left\{z\in S^{N-1}_\mathbb C\Big|z_iz_jz_k=0,\forall i,j,k\ {\rm distinct}\right\}$$

We have then a $2\times2$ matrix model for the coordinates of $S^{N-1,1}_{\mathbb R,*}$, as follows:
$$x_i\to\gamma_i=\begin{pmatrix}0&z_i\\ \bar{z}_i&0\end{pmatrix}$$

Indeed, the matrices $\gamma_i$ on the right are all self-adjoint, their squares sum up to $1$, they half-commute, and they satisfy $\gamma_i\gamma_j\gamma_k=0$ for $i,j,k$ distinct. Thus we have indeed a morphism $C(S^{N-1,1}_{\mathbb R,*})\to M_2(C(S^{N-1,1}_\mathbb C))$ mapping $x_i\to\gamma_i$, as claimed.

We can use this model in order to prove the linear independence. Indeed, the variables $x_a^2x_b,x_ax_bx_a,x_a^3$ that we are interested in are mapped to the following variables:
$$\gamma_a^2\gamma_b=\begin{pmatrix}0&|z_a|^2z_b\\ |z_a|^2\bar{z}_b&0\end{pmatrix},\quad\gamma_a\gamma_b\gamma_a=\begin{pmatrix}0&z_a^2\bar{z}_b\\ \bar{z}_a^2z_b&0\end{pmatrix},\quad\gamma_a^3=\begin{pmatrix}0&|z_a|^2z_a\\ |z_a|^2\bar{z}_a&0\end{pmatrix}$$

Now since $|z_1|^2z_2,|z_2|^2z_1,z_1^2\bar{z}_2,z_2^2\bar{z}_1,|z_1|^2z_1,|z_2|^2z_2$ are linearly independent over $S^1_\mathbb C$, the upper right entries of the above matrices are linearly independent over $S^{N-1,1}_\mathbb C$. Thus the matrices themselves are linearly independent, and this proves our result.
\end{proof}

With the above lemma in hand, we can now reformulate the coaction problem into a purely quantum group-theoretical problem, as follows:

\begin{lemma}
A quantum group $G\subset O_N^+$ acts on $S^{N-1,1}_{\mathbb R,*}$ precisely when its standard coordinates $u_{ij}$ satisfy the following relations:
\begin{enumerate}
\item $u_{ia}u_{ja}u_{kb}+u_{ib}u_{ja}u_{ka}=0$ for any $i,j,k$ distinct.

\item $u_{ia}u_{jb}u_{ka}=0$ for any $i,j,k$ distinct.

\item $u_{ia}^2u_{kb}=u_{kb}u_{ia}^2$.

\item $u_{ka}u_{ia}u_{ib}=u_{ib}u_{ia}u_{ka}$.

\item $u_{ia}u_{ib}u_{ka}=u_{kb}u_{ib}u_{ia}$.
\end{enumerate}
\end{lemma}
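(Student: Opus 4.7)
The plan is to reformulate the coaction condition as a system of relations on the $u_{ij}$ by expanding $X_iX_jX_k$, using the sphere relations to reduce to the basis supplied by Lemma 5.3, and then reading off the coefficient equations.

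A coaction $\Phi:C(S^{N-1,1}_{\mathbb R,*})\to C(G)\otimes C(S^{N-1,1}_{\mathbb R,*})$ defined by $x_i\mapsto X_i=\sum_ju_{ij}\otimes x_j$ exists if and only if the $X_i$ satisfy the defining relations of $S^{N-1,1}_{\mathbb R,*}$, namely $X_iX_jX_k=0$ for $i,j,k$ distinct and $X_iX_jX_k=X_kX_jX_i$ in general; the latter is automatic in the distinct case, so its nontrivial content reduces, up to the trivial coincidence $i=k$, to the two subcases $j=i$ (giving $X_i^2X_k=X_kX_i^2$) and $j=k$ (giving $X_iX_k^2=X_k^2X_i$). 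The engine of the argument is the expansion from the proof of Proposition 5.2,
$$X_iX_jX_k = \sum_{a\neq b}(u_{ia}u_{ja}u_{kb}+u_{ib}u_{ja}u_{ka})\otimes x_a^2x_b + \sum_{a\neq b}u_{ia}u_{jb}u_{ka}\otimes x_ax_bx_a + \sum_a u_{ia}u_{ja}u_{ka}\otimes x_a^3,$$
combined with the linear independence of $\{x_a^2x_b,\,x_ax_bx_a,\,x_a^3:a\neq b\}$ established in Lemma 5.3, which lets us match coefficients one family at a time.

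For $i,j,k$ distinct, the equation $X_iX_jX_k=0$ forces each of the three families of coefficients to vanish separately, yielding (1) from the $x_a^2x_b$ coefficient, (2) with $a\neq b$ from the $x_ax_bx_a$ coefficient, and $u_{ia}u_{ja}u_{ka}=0$ from the $x_a^3$ coefficient; the last is absorbed into (2) as the specialization $a=b$. For $X_i^2X_k=X_kX_i^2$, the $x_a^3$ coefficient yields (3) in the case $b=a$, the $x_ax_bx_a$ coefficient furnishes a relation that is rewritten into (5) (directly or after an antipode rewriting), and the $x_a^2x_b$ coefficient produces a single composite identity which, once (3) is available, decouples into (4). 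The parallel case $X_iX_k^2=X_k^2X_i$ supplies the general form of (3) via its $x_a^3$ coefficient by swapping the roles of $i$ and $k$, and is otherwise consistent with the relations (4), (5) already obtained. The converse direction is a direct verification: assuming (1)-(5), substituting back into the expansion shows term by term that $X_iX_jX_k$ vanishes when $i,j,k$ are distinct and equals $X_kX_jX_i$ in the remaining cases.

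The main technical obstacle will be the $x_a^2x_b$ coefficient equation from the half-commutation identity, which couples the pure quadratic commutation $u_{ia}^2u_{kb}=u_{kb}u_{ia}^2$ of (3) to the triple reordering $u_{ka}u_{ia}u_{ib}=u_{ib}u_{ia}u_{ka}$ of (4). Isolating these two strands requires combining information from the $x_a^3$ coefficients in both subcases $j=i$ and $j=k$, which pin down (3) first, so that (4) then emerges cleanly as the remainder. Once this separation is carried out, the remaining content of the half-commutation equations reads off as (5) with only bookkeeping, completing the equivalence.
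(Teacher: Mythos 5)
Your overall setup is the same as the paper's: expand $X_iX_jX_k$ in the linearly independent family $\{x_a^2x_b,\,x_ax_bx_a,\,x_a^3:a\neq b\}$ from Lemma 5.3, impose $X_iX_jX_k=0$ for $i,j,k$ distinct and $X_iX_jX_k=X_kX_jX_i$ otherwise, and match coefficients. Your treatment of (1) and (2), and the absorption of the $x_a^3$ relation into the others by specializing $a=b$, is correct and matches the paper.

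There is, however, a genuine gap in how you propose to separate (3) from (4). You claim the $x_a^3$ coefficients of the two subcases $j=i$ and $j=k$ together ``pin down (3) first.'' They do not: the $j=i$ case of the $x_a^3$ coefficient gives $u_{ia}^2u_{ka}=u_{ka}u_{ia}^2$, and the $j=k$ case gives $u_{ia}u_{ka}^2=u_{ka}^2u_{ia}$, which is the same relation up to relabelling $i\leftrightarrow k$. Both are only the $b=a$ specialization of (3); the general relation $u_{ia}^2u_{kb}=u_{kb}u_{ia}^2$ with $b\neq a$ is not reachable from the $x_a^3$ coefficients and remains entangled with (4) inside the $x_a^2x_b$ coefficient identity $u_{ia}u_{ia}u_{kb}+u_{ib}u_{ia}u_{ka}=u_{ka}u_{ia}u_{ib}+u_{kb}u_{ia}u_{ia}$. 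The missing idea is the antipode trick: writing $[x,y,z]=xyz-zyx$, this identity reads $[u_{ia},u_{ia},u_{kb}]=[u_{ka},u_{ia},u_{ib}]$; applying the antipode and relabelling yields $[u_{kb},u_{ia},u_{ia}]=[u_{ka},u_{ia},u_{ib}]$, i.e. $-[u_{ia},u_{ia},u_{kb}]=[u_{ka},u_{ia},u_{ib}]$, and comparing with the original forces $[u_{ia},u_{ia},u_{kb}]=[u_{ka},u_{ia},u_{ib}]=0$, which is exactly (3) and (4). Without this step, your argument never produces the full strength of (3), and the decoupling of (4) you describe does not get off the ground.
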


\begin{proof}
We use notations from the beginning of the proof of Proposition 5.2, along with the following formula, also established there:
\begin{eqnarray*}
X_iX_jX_k
&=&\sum_{a\neq b}(u_{ia}u_{ja}u_{kb}+u_{ib}u_{ja}u_{ka})\otimes x_a^2x_b\\
&+&\sum_{a\neq b}u_{ia}u_{jb}u_{ka}\otimes x_ax_bx_a+\sum_au_{ia}u_{ja}u_{ka}\otimes x_a^3
\end{eqnarray*}

In order to have an action as in the statement, these quantities must satisfy $X_iX_kX_k=0$ for $i,j,k$ disctinct, and $X_iX_kX_k=X_kX_jX_i$ for $i,j,k$ not distinct. Now by using Lemma 5.3, we conclude that the relations to be satisfied are as follows:

(A) For $i,j,k$ distinct, the following must hold:
$$u_{ia}u_{ja}u_{kb}+u_{ib}u_{ja}u_{ka}=0,\forall a\neq b$$
$$u_{ia}u_{jb}u_{ka}=0,\forall a\neq b$$
$$u_{ia}u_{ja}u_{ka}=0,\forall a$$

(B) For $i,j,k$ not distinct, the following must hold:
$$u_{ia}u_{ja}u_{kb}+u_{ib}u_{ja}u_{ka}=u_{ka}u_{ja}u_{ib}+u_{kb}u_{ja}u_{ia},\forall a\neq b$$
$$u_{ia}u_{jb}u_{ka}=u_{ka}u_{jb}u_{ia},\forall a\neq b$$
$$u_{ia}u_{ja}u_{ka}=u_{ka}u_{ja}u_{ia},\forall a$$

In order to simplify this set of relations, the first observation is that the last relations in both (A) and (B) can be merged with the other ones, and we are led to:

(A') For $i,j,k$ distinct, the following must hold:
$$u_{ia}u_{ja}u_{kb}+u_{ib}u_{ja}u_{ka}=0,\forall a,b$$
$$u_{ia}u_{jb}u_{ka}=0,\forall a,b$$

(B') For $i,j,k$ not distinct, the following must hold:
$$u_{ia}u_{ja}u_{kb}+u_{ib}u_{ja}u_{ka}=u_{ka}u_{ja}u_{ib}+u_{kb}u_{ja}u_{ia},\forall a,b$$
$$u_{ia}u_{jb}u_{ka}=u_{ka}u_{jb}u_{ia},\forall a,b$$

Observe that the relations (A') are exactly the relations (1,2) in the statement.

Let us further process the relations (B'). In the case $i=k$ the relations are automatic, and in the cases $j=i,j=k$ the relations that we obtain coincide, via $i\leftrightarrow k$. Thus (B') reduces to the set of relations obtained by setting $j=i$, which are as follows:
$$u_{ia}u_{ia}u_{kb}+u_{ib}u_{ia}u_{ka}=u_{ka}u_{ia}u_{ib}+u_{kb}u_{ia}u_{ia}$$
$$u_{ia}u_{ib}u_{ka}=u_{ka}u_{ib}u_{ia}$$

Observe that the second relation is the relation (5) in the statement. Regarding now the first relation, with the notation $[x,y,z]=xyz-zyx$, this is as follows:
$$[u_{ia},u_{ia},u_{kb}]=[u_{ka},u_{ia},u_{ib}]$$

By applying the antipode, we obtain $[u_{bk},u_{ai},u_{ai}]=[u_{bi},u_{ai},u_{ak}]$, and then relabelling $a\leftrightarrow i$ and $b\leftrightarrow k$, this relation becomes $[u_{kb},u_{ia},u_{ia}]=[u_{ka},u_{ia},u_{ib}]$. Now since we have $[x,y,z]=-[z,y,x]$, by comparing this latter relation with the original one, a simplification occurs, and the resulting relations are as follows:
$$[u_{ia},u_{ia},u_{kb}]=[u_{ka},u_{ia},u_{ib}]=0$$

But these are exactly the relations (3,4) in the statement, and we are done.
\end{proof}

Now by solving the quantum group problem raised by Lemma 5.4, we obtain:

\begin{proposition}
We have $G^+(S^{N-1,1}_{\mathbb R,*})=H_N^{[\infty]}$.
\end{proposition}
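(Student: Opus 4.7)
The plan is to prove the two inclusions separately. The inclusion $H_N^{[\infty]}\subset G^+(S^{N-1,1}_{\mathbb R,*})$ is already supplied by Proposition 5.2, so my task is the reverse inclusion. By Lemma 5.4, a quantum group $G\subset O_N^+$ acts on $S^{N-1,1}_{\mathbb R,*}$ precisely when its coordinates satisfy (1)--(5), and I have to deduce from these the defining relations of $H_N^{[\infty]}$ from Proposition 4.3, namely $u_{ia}u_{jb}u_{kc}=0$ whenever the outer entries $u_{ia},u_{kc}$ share a row ($i=k$, $a\neq c$) or a column ($a=c$, $i\neq k$).

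First, I would observe that relation (2) directly yields the same-column case when the three row indices $i,j,k$ are all distinct, and that applying the antipode $S(u_{ij})=u_{ji}$ of $O_N^+$ to (2) and relabelling gives the same-row case when the three column indices $a,b,c$ are all distinct. What is left are the cases in which the middle index matches one of the outer ones; by antipode symmetry these reduce, via (4)--(5) used to permute indices, to a single representative vanishing, which I would take to be
$$u_{ia}u_{ib}u_{ka}=0\qquad(i\neq k,\ a,b\ \text{arbitrary}).$$

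The core algebraic input is this last vanishing. My opening move would be to combine (5) $u_{ia}u_{ib}u_{ka}=u_{kb}u_{ib}u_{ia}$ with (4) applied with $a$ and $b$ interchanged, which gives the intermediate identity
$$u_{ia}u_{ib}u_{ka}=u_{ia}u_{ib}u_{kb}.$$
I would then feed this into relation (1), with a third row index $j\notin\{i,k\}$ (available for $N\geq3$, the case $N=2$ being handled separately via the $2\times2$ matrix model of Lemma 5.3): multiplying (1) by an appropriate coordinate, summing over an auxiliary column index, and exploiting the centrality of squares (3) together with the $O_N^+$-orthogonality $\sum_x u_{ix}u_{jx}=\delta_{ij}$, should force the common value above to vanish. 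Once this is in hand, the three other residual cases follow by antipode and by routine reindexing using (4)--(5), so the coordinates of $G$ satisfy all the defining relations of $H_N^{[\infty]}$, giving $G\subset H_N^{[\infty]}$ and hence $G^+(S^{N-1,1}_{\mathbb R,*})=H_N^{[\infty]}$.

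The hard part is precisely the derivation of the key vanishing from (1)--(5): no single one of these relations implies it, and the argument must combine the antipode, the row/column orthogonality of $u$, and the centrality of squares in a tightly coordinated way to convert the half-commutation data carried by (4)--(5) and the signed identity (1) into a genuine vanishing. Everything before and after this central step is bookkeeping.
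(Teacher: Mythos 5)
Your high-level plan coincides with the paper's (use Proposition 5.2 for $\supset$, and derive the defining relations of $H_N^{[\infty]}$ from Lemma 5.4 for $\subset$), but the step you yourself flag as "the hard part" — deriving the key vanishing — is exactly where the proof lives, and your sketch of it does not go through as described. You claim that feeding the identity $u_{ia}u_{ib}u_{ka}=u_{ia}u_{ib}u_{kb}$ into relation (1) with a third row index, then multiplying, summing and invoking (3) and orthogonality, "should force" the vanishing; but no concrete manipulation of this kind is exhibited, and I could not make the suggested combination of (1), (4)--(5) close up. In particular relation (1) is a $\textit{signed}$ identity for three distinct row indices, and after passing it through (4)--(5) the quantities that appear are not of the target form, so it is not at all clear that the intended cancellation occurs. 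A blind gesture at "an appropriate coordinate" and "an auxiliary index" is not a proof of the one step that everything hinges on.

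There is a second, more structural issue. The paper does not prove the $H_N^{[\infty]}$ relations $u_{ia}u_{jb}u_{kc}=0$ directly; it first derives the cubic relation $u_{ia}=u_{ia}^3$ from Lemma 5.4 (2) alone, by multiplying $u_{ia}u_{jb}u_{ka}=0$ on the right by $u_{ka}$ and summing over $k\neq i,j$ to get $u_{ia}u_{jb}(1-u_{ia}^2-u_{ja}^2)=0$, then using (3) to move $u_{jb}$, multiplying by $u_{jb}$ and summing over $b$, and finally summing over $j\neq i$. This gives $G\subset H_N^+$, after which (3) (the relation $[a,b^2]=0$) cuts $H_N^+$ down to $H_N^{[\infty]}$ via the diagrammatic characterization in Proposition 4.3. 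Your route of directly establishing each missing $H_N^{[\infty]}$ relation is in principle possible but substantially harder, and you have not carried it out. Finally, your reduction of the residual cases to the "single representative" $u_{ia}u_{ib}u_{ka}=0$ is not justified: after using (2) and its antipode there remain two genuinely different residual families ($j=i$ and $j=k$ in the same-column case), and (4)--(5) do not visibly identify one with the other — e.g.\ (5) relates $u_{ia}u_{ib}u_{ka}$ to $u_{kb}u_{ib}u_{ia}$, whose outer entries are in neither a common row nor a common column, so it is not even an $H_N^{[\infty]}$ target. Both the reduction and the core derivation need to be made explicit before this counts as a proof.
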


\begin{proof}
The inclusion $\supset$ is clear from Proposition 5.2. For the converse, we already have the result at $N=2$, so assume $N\geq3$. By using Lemma 5.4 (2), for $i\neq j$ we have:
\begin{eqnarray*}
u_{ia}u_{jb}u_{ka}=0,\forall k\neq i,j
&\implies&u_{ia}u_{jb}u_{ka}^2=0,\forall k\neq i,j\\
&\implies&u_{ia}u_{jb}\left(\sum_{k\neq i,j}u_{ka}^2\right)=0,\forall i\neq j\\
&\implies&u_{ia}u_{jb}(1-u_{ia}^2-u_{ja}^2)=0,\forall i\neq j
\end{eqnarray*}

Now by using Lemma 5.4 (3), we can move the variable $u_{jb}$ to the right. By further multiply by $u_{jb}$ to the right, and then summing over $b$, we obtain:
\begin{eqnarray*}
u_{ia}u_{jb}(1-u_{ia}^2-u_{ja}^2)=0,\forall i\neq j
&\implies&u_{ia}(1-u_{ia}^2-u_{ja}^2)u_{jb}=0,\forall i\neq j\\
&\implies&u_{ia}(1-u_{ia}^2-u_{ja}^2)u_{jb}^2=0,\forall i\neq j\\
&\implies&u_{ia}(1-u_{ia}^2-u_{ja}^2)=0,\forall i\neq j
\end{eqnarray*}

We can proceed now as follows, by summing over $j\neq i$:
\begin{eqnarray*}
u_{ia}(1-u_{ia}^2-u_{ja}^2)=0,\forall i\neq j
&\implies&u_{ia}u_{ja}^2=u_{ia}-u_{ia}^3,\forall i\neq j\\
&\implies&u_{ia}(1-u_{ia}^2)=(N-1)(u_{ia}-u_{ia}^3)\\
&\implies&u_{ia}=u_{ia}^3
\end{eqnarray*}

Thus the standard coordinates are partial isometries, and so $G\subset H_N^+$. On the other hand, we know from the proof of Proposition 4.3 (3) that the quantum subgroup $G\subset H_N^+$ obtained via the relations $[a,b^2]=0$ is $H_N^{[\infty]}$, and this finishes the proof.
\end{proof}

We have now complete results for the 9 main spheres, as follows:

\begin{theorem}
The quantum isometry groups of the $9$ main spheres are
$$\xymatrix@R=13mm@C=17mm{
O_N\ar[r]&O_N^*\ar[r]&O_N^+\\
H_N\ar[r]\ar[u]&H_N^{[\infty]}\ar[r]\ar[u]&\bar{O}_N^*\ar[u]\\
H_N^+\ar[r]\ar[u]&H_N\ar[r]\ar[u]&\bar{O}_N\ar[u]}$$
where $H_N^+,H_N^{[\infty]}$ and $\bar{O}_N,O_N^*,\bar{O}_N^*,O_N^*$ are noncommutative versions of $H_N,O_N$.
\end{theorem}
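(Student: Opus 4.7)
The proof is essentially a matter of assembling the results already established in the previous sections, since the argument for each of the nine entries of the diagram has been carried out separately.

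My plan is to go entry by entry and cite the appropriate result. The three spheres on the top row, namely $S^{N-1}_{\mathbb R}, S^{N-1}_{\mathbb R,*}, S^{N-1}_{\mathbb R,+}$, yield $O_N, O_N^*, O_N^+$ by the results of \cite{bgo}, as recalled in Section 3 just before Proposition 3.3. The two remaining spheres in the rightmost column, $\bar{S}^{N-1}_{\mathbb R,*}$ and $\bar{S}^{N-1}_{\mathbb R}$, yield $\bar{O}_N^*$ and $\bar{O}_N$ by the results of \cite{ban}; equivalently, these are the $d=N$ cases of Theorem 3.5. This takes care of 5 of the 9 entries.

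For the 4 new entries in the bottom-left $2\times 2$ block, the argument is as follows. The bottom-left corner sphere $S^{N-1,0}_{\mathbb R}$ is just $\mathbb Z_2^{\oplus N}$, whose quantum isometry group is $H_N^+$ by \cite{bbc}; this is Proposition 3.3(1). The classical polygonal sphere $S^{N-1,1}_{\mathbb R}$ gives $H_N$ by Proposition 3.3(2) applied at $d=2$. The twisted polygonal sphere $\bar{S}^{N-1,1}_{\mathbb R}$ gives $H_N$ by Theorem 3.5(2). Finally, the half-liberated polygonal sphere $S^{N-1,1}_{\mathbb R,*}$, which was the one case left open after Section 3, gives $H_N^{[\infty]}$ by Proposition 5.5. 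Thus all 9 entries have been identified, and the diagram is established.

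The genuinely hard step in the whole argument is of course Proposition 5.5, whose proof occupied the bulk of Section 5; the rest of Theorem 5.6 is a bookkeeping statement. The concluding remark that $H_N^+, H_N^{[\infty]}$ and $\bar{O}_N, O_N^*, \bar{O}_N^*, O_N^+$ are noncommutative versions of $H_N, O_N$ requires no proof — it is simply a reminder of how these quantum groups arose, respectively as the free version of $H_N$ in \cite{bbc}, the intermediate quantum group of \cite{bcs} satisfying $H_N^* \subset H_N^{[\infty]} \subset H_N^+$, the hypercube twist of $O_N$ from \cite{bbc}, the half-liberation from \cite{bve}, the half-liberated twist from \cite{ban}, and the Wang free analogue from \cite{wa1}.
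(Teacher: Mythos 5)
Your proposal is correct and matches the paper's approach, which is likewise a short assembly of the nine cases from Proposition 3.3, Theorem 3.5, Proposition 5.5, and the results of \cite{ban}, \cite{bgo}. The only tiny imprecision is the aside that both $\bar{S}^{N-1}_{\mathbb R,*}$ and $\bar{S}^{N-1}_{\mathbb R}$ are ``$d=N$ cases of Theorem 3.5'' --- that theorem only covers the twisted classical spheres $\bar{S}^{N-1,d-1}_\mathbb R$, so it handles $\bar{S}^{N-1}_{\mathbb R}$ but not $\bar{S}^{N-1}_{\mathbb R,*}$; the latter is covered directly by \cite{ban}, as you correctly state in the main clause of that sentence.
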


\begin{proof}
This follows indeed from \cite{ban}, \cite{bgo} and from the above results.
\end{proof}

As a first comment, in view of the conjectures in section 2 above, Theorem 5.6 probably deals with the general mixed monomial case. We do not know if it is so.

In general, there are of course many questions left. Perhaps the very first question here regards $S^{N-1,1}_{\mathbb R,+}$, whose quantum isometry group should be probably $H_N^{[\infty]}$. Technically speaking, the problem is that we have no good models for $S^{N-1,1}_{\mathbb R,+}$, and hence no tools for dealing with independence questions for products of coordinates over it.

We should remind, however, that $S^{N-1,1}_{\mathbb R,+}$ is a bit of a ``pathological'' sphere. Besides various issues with diagrams and axiomatization, coming from sections 1-2 above, one problem is that the operation $S^{N-1,1}_\mathbb R\to S^{N-1,1}_{\mathbb R,+}$ is not exactly a ``liberation'' in the sense of free probability theory \cite{bpa}, \cite{vdn}. More precisely, as explained, in \cite{bgo}, the operation $S^{N-1}_\mathbb R\to S^{N-1}_{\mathbb R,+}$ is compatible with the Bercovici-Pata bijection \cite{bpa}, at the level of the corresponding hyperspherical laws, but this seems to fail for $S^{N-1,1}_\mathbb R\to S^{N-1,1}_{\mathbb R,+}$.

Summarizing, if all our conjectures and guesses hold true, Theorem 5.6 above might be indeed the ``final'' statement regarding the quantum isometries of polygonal spheres. Note however that the Riemannian interpretation of our various computations, in the smooth case, in the spirit of the constructions in \cite{bgo}, remains an open problem.

\section{Complexification issues}

In this section we discuss a straightforward complex extension of the above results. Our starting point will be the following definition, from \cite{ban}:

\begin{definition}
We consider the universal $C^*$-algebra
$$C(S^{N-1}_{\mathbb C,+})=C^*\left(z_1,\ldots,z_N\Big|\sum_iz_iz_i^*=\sum_iz_i^*z_i=1\right)$$
and call the underlying space $S^{N-1}_{\mathbb C,+}$ free complex sphere.
\end{definition}

As a first observation, the relation between the real and complex spheres is quite unobvious in the free case. Recall indeed that in the classical case we have an isomorphism $S^{N-1}_\mathbb C\simeq S^{2N-1}_\mathbb R$, obtained by setting $z_i=x_i+iy_i$. In the free case no such isomorphism is available, and in fact both inclusions $S^{N-1}_{\mathbb C,+}\subset S^{2N-1}_{\mathbb R,+}$ and $S^{2N-1}_{\mathbb R,+}\subset S^{N-1}_{\mathbb C,+}$ fail to hold. This is due to the formula $(x+iy)(x-iy)=(x^2+y^2)-i[x,y]$, which makes appear the commutator $[x,y]$, which has no reasons to vanish for the free spherical coordinates.

We can define quantum isometry groups, in a complex sense, as follows:

\begin{definition}
Consider an algebraic manifold $Z\subset S^{N-1}_{\mathbb C,+}$, assumed to be non-degenerate, in the sense that the variables $Re(z_i),Im(z_i)\in C(Z)$ are linearly independent.
\begin{enumerate}
\item We let $G^+(Z)\subset U_N^+$ be the biggest quantum subgroup acting affinely on $Z$.

\item We set as well $G(Z)=G^+(Z)\cap U_N$, with the intersection taken inside $U_N^+$.
\end{enumerate}
\end{definition}

Here $U_N^+$ is the free analogue of $U_N$, constructed by Wang in \cite{wa1}, and the existence and uniqueness of $G^+(Z)$ follow as in the proof of Proposition 4.1 above.

In the classical case, where $Z\subset S^{N-1}_\mathbb C$, we can use the isomorphism $S^{N-1}_\mathbb C\simeq S^{2N-1}_\mathbb R$ in order to view $Z$ as a real manifold, $Z_r\subset S^{2N-1}_\mathbb R$. We can therefore construct a ``real'' quantum isometry group $G^+(Z_r)\subset O_{2N}^+$, and have $G^+(Z)=G^+(Z_r)\cap U_N^+$, where the intersection is taken inside $O_{2N}^+$, by using the embedding $U_N^+\subset O_{2N}^+$ given by the fact that for $u=v+iw$ biunitary, the matrix $(^v_w{\ }^w_v)$ is orthogonal. See \cite{bdu}.

As an example here, consider the torus $\mathbb T\subset\mathbb C$. A straightforward complex extension of the trick in Proposition 3.2 (2), explained in \cite{ban}, shows that we have $G^+(\mathbb T)=G(\mathbb T)=U_1$. We should mention that it is true as well that we have $G^+(\mathbb T_r)=G(\mathbb T_r)=O_2$, therefore confirming the formula $G^+(\mathbb T)=G^+(\mathbb T_r)\cap U_1^+$, but this result holds due to much deeper reasons, explained by Bhowmick in \cite{bho}. For more on these issues, see also \cite{gjo}.

In the non-classical case, as explained above, there is no embedding $S^{N-1}_{\mathbb C,+}\subset S^{2N-1}_{\mathbb R,+}$ that can be used, and the relation between Proposition 4.1 and Definition 6.2 remains quite unclear. In short, we have to develop the complex theory ``paralleling'' the real one.

As explained in \cite{ban}, the 5 real spheres have 5 complex analogues. We can extend this analogy to the level of polygonal spheres, as follows:

\begin{definition}
The spheres $S^{N-1,d-1}_{\mathbb C},\bar{S}^{N-1,d-1}_{\mathbb C},S^{N-1,d-1}_{\mathbb C,**},\bar{S}^{N-1,d-1}_{\mathbb C,**},S^{N-1,d-1}_{\mathbb C,+}$ are constructed from $S^{N-1}_{\mathbb C,+}$ in the same way as their real counterparts are constructed from $S^{N-1}_{\mathbb R,+}$, by assuming that the corresponding relations hold between the variables $x_i=z_i,z_i^*$.
\end{definition}

Here we use the convention that the subscript $**$ from the complex case corresponds to the subscript $*$ from the real case. For more on this issue, see \cite{ban}, \cite{bdu}.

As an illustration, in the free case the polygonal spheres are as follows:
$$C(S^{N-1,d-1}_{\mathbb C,+})=C(S^{N-1}_{\mathbb C,+})/\left\langle z_{i_0}^{\varepsilon_0}\ldots z_{i_d}^{\varepsilon_d}=0,\forall i_0,\ldots,i_d\ {\rm distinct},\forall \varepsilon_1,\ldots,\varepsilon_d\in\{1,*\}\right\rangle$$

As in the real case, we will restrict now the attention to the 5 main spheres, coming from \cite{ban}, and to their intersections. We have 9 such spheres here, as follows:

\begin{proposition}
The $5$ main spheres, and the intersections between them, are
$$\xymatrix@R=12mm@C=12mm{
S^{N-1}_\mathbb C\ar[r]&S^{N-1}_{\mathbb C,**}\ar[r]&S^{N-1}_{\mathbb C,+}\\
S^{N-1,1}_\mathbb C\ar[r]\ar[u]&S^{N-1,1}_{\mathbb C,**}\ar[r]\ar[u]&\bar{S}^{N-1}_{\mathbb C,**}\ar[u]\\
S^{N-1,0}_\mathbb C\ar[r]\ar[u]&\bar{S}^{N-1,1}_\mathbb C\ar[r]\ar[u]&\bar{S}^{N-1}_\mathbb C\ar[u]}$$
with all the maps being inclusions.
\end{proposition}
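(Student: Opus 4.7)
The plan is to follow, step by step, the proof of Proposition 1.2, exploiting the fact that Definition 6.3 is set up so that each relation defining a real sphere transfers to the complex setting simply by letting the variables $a,b,c,\ldots$ range over the enlarged alphabet $\{z_i,z_i^*\}$ in place of $\{x_i\}$. All nine spheres are, by construction, presented as quotients of $C(S^{N-1}_{\mathbb C,+})$, so every arrow in the diagram is automatically a surjection of $C^*$-algebras, i.e.\ an inclusion at the level of noncommutative spaces. The five main spheres on the top row, the right column, and the bottom-right corner are those of \cite{ban}. Hence the real content of the proposition is to identify the four bottom-left polygonal spheres with the four pairwise intersections of neighbouring main spheres, exactly as was done in the real case.

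For each of the four intersections I would then repeat the corresponding case from the proof of Proposition 1.2, with $x_i$ replaced by an arbitrary $z_i^{\varepsilon}$. In $S^{N-1}_\mathbb C\cap\bar{S}^{N-1}_\mathbb C$, the commutation $ab=ba$ together with the twist $ab=-ba$ forces $ab=0$ for all $a,b\in\{z_i,z_i^*\}$ of distinct indices, which is the defining condition of $S^{N-1,0}_\mathbb C$. In $S^{N-1}_\mathbb C\cap\bar{S}^{N-1}_{\mathbb C,**}$ the commutation $abc=cba$ combined with the twisted half-commutation $abc=-cba$ on index-distinct triples again forces the triple product to vanish, yielding $S^{N-1,1}_\mathbb C$. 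For $S^{N-1}_{\mathbb C,**}\cap\bar{S}^{N-1}_\mathbb C$ one first upgrades $\bar{S}^{N-1}_\mathbb C\subset\bar{S}^{N-1}_{\mathbb C,**}$ via the standard chain $abc=-bac=bca=-cba$ valid on $\bar{S}^{N-1}_\mathbb C$ for index-distinct triples, so that intersecting with $S^{N-1}_{\mathbb C,**}$ reduces the extra relations to $abc=0$, landing in $\bar{S}^{N-1,1}_\mathbb C$. Finally, $S^{N-1}_{\mathbb C,**}\cap\bar{S}^{N-1}_{\mathbb C,**}$ is obtained by the same sign-clash argument, giving $S^{N-1,1}_{\mathbb C,**}$.

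The one point that genuinely requires a verification beyond the real case is that the twist relations of Definition 6.3, when restricted to monomials in $\{z_i,z_i^*\}$ whose underlying indices are pairwise distinct, really do reduce to the clean sign relations $ab=-ba$ and $abc=-cba$ used above. This is where I expect the (mild) main obstacle to lie: one has to check that the signature $\varepsilon(\ker(\cdots))$ appearing in Definition 1.3 simplifies, on such monomials, to the signature of the transposition (resp.\ of the cycle $(13)$) that interchanges $a\leftrightarrow b$ (resp.\ $a\leftrightarrow c$), because the kernel partition in question is then the identity partition. Once this has been verified, the four intersection computations are routine, and the nine-diagram of inclusions follows exactly as in Proposition 1.2.
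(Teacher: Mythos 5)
Your proof is correct and takes exactly the same approach as the paper, whose own proof simply invokes Proposition 1.2 with the variables $x_i$ replaced by $x_i=z_i,z_i^*$; your expansion of the four intersection computations matches the real-case argument line by line. The ``one point requiring verification'' you flag at the end is not really a new obstacle in the complex setting: Definition 6.3 imports the explicit sign relations of Definition 1.1 directly, so the reduction of the signature $\varepsilon(\ker(\cdots))$ on distinct-index monomials to the plain signs $ab=-ba$, $abc=-cba$ is the same as in the real case and needs no separate check.
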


\begin{proof}
This is similar to the proof of Proposition 1.2 above, by replacing in all the computations there the variables $x_i$ by the variables $x_i=z_i,z_i^*$.
\end{proof}

As explained in \cite{ban}, the axiomatization problems in the complex case are quite similar to those in the real case, and the same happens in the present polygonal context. Thus, we will not review in detail the material from sections 1-2 above. Let us mention, however, that there are a few subtleties appearing in the complex case. For instance the saturation notion in Definition 2.2 (1) above has a straightforward complex analogue, but it is not clear whether the real and complex saturations of a filtered group $G\subset S_\infty$ coincide. In short, the ``noncommutative algebraic geometry'' questions discussed in sections 1-2 above are expected to be the same over $\mathbb R$ and $\mathbb C$, but we don't have a proof for this fact.

Let us discuss now the computation of the associated quantum isometry groups, following some previous results from \cite{ban}, and the material from sections 3-5 above. 

We use the compact group $K_N=\mathbb T\wr S_N$ and its free version $K_N^+=\mathbb T\wr_*S_N^+$, which appear as straightforward complex analogues of the hyperoctahedral group $H_N=\mathbb Z_2\wr S_N$, and of its free version $H_N^+=\mathbb Z_2\wr_*S_N^+$, constructed in \cite{bbc}. Also, we define the complex version $K_N^{[\infty]}\subset U_N^+$ of the quantum group $H_N^{[\infty]}\subset O_N^+$ by using the relations $\alpha\beta\gamma=0$ with $\alpha=a,a^*,\beta=b,b^*,\gamma=c,c^*$, for any $a\neq c$ on the same row of $u$.

With these conventions, we have the following result:

\begin{theorem}
The quantum isometry groups of the $9$ main complex spheres are
$$\xymatrix@R=13mm@C=17mm{
U_N\ar[r]&U_N^{**}\ar[r]&U_N^+\\
K_N\ar[r]\ar[u]&K_N^{[\infty]}\ar[r]\ar[u]&\bar{U}_N^{**}\ar[u]\\
K_N^+\ar[r]\ar[u]&K_N\ar[r]\ar[u]&\bar{U}_N\ar[u]}$$
where $K_N$ and its versions are the complex analogues of $H_N$ and its versions.
\end{theorem}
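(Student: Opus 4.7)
The plan is to parallel the proof of Theorem 5.6, adapting each step from the real to the complex setting. The five spheres on the top row and in the rightmost column of the diagram have quantum isometry groups that follow from results already established in \cite{ban}, together with a complex analogue of Proposition 3.2, so the work is concentrated on the four remaining spheres in the bottom-left $2\times 2$ subdiagram: $S^{N-1,0}_{\mathbb C}$, $S^{N-1,1}_{\mathbb C}$, $\bar S^{N-1,1}_{\mathbb C}$, and $S^{N-1,1}_{\mathbb C,**}$. Throughout, we replace the real orthogonality trick in Proposition 3.2 by its unitary counterpart, writing all relations in terms of the variables $x_i=z_i,z_i^*$.

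For $S^{N-1,0}_{\mathbb C}$, the sphere decomposes into $N$ circles $\mathbb T$ on the coordinate axes, and the complex analogue of the argument from \cite{bbc} yields the wreath product $K_N^+=\mathbb T\wr_* S_N^+$. For $S^{N-1,1}_{\mathbb C}$, I would follow the recursive union decomposition proof of Proposition 3.3, namely writing $S^{N-1,d-1}_{\mathbb C}=\bigcup_{|I|=d}(S^{d-1}_{\mathbb C})^I$ and observing that the affine isometry group of $S^{N-1,1}_{\mathbb C}$ must permute the $\binom{N}{2}$ copies of $S^1_{\mathbb C}$, hence restricts to a permutation action of the axes with a $\mathbb T$-twist per axis; combined with the linear independence of $\{z_az_b^*,z_az_b,z_a^*z_b^*\mid a\leq b\}$ over this union, which one verifies as in the real case, the complex analogue of Proposition 3.2(2) collapses $G^+(X)$ to $G(X)=K_N$.

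For $\bar S^{N-1,1}_{\mathbb C}$, I would port the ``twisted trick'' from the proof of Theorem 3.5: the anticommutation on the variables $x_i=z_i,z_i^*$ forces any acting $G\subset U_N^+$ to factor through $\bar U_N$, and then the expansion of $X_iX_jX_k$ for $i,j,k$ distinct, using the linear independence of $\{x_a^2x_b\}$ over the twisted complex sphere, yields cyclic vanishing relations on the $u_{ij}$'s that collapse to the defining relations of $K_N$ after multiplying by $u_{kb}^*$ and summing.

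The main obstacle is the sphere $S^{N-1,1}_{\mathbb C,**}$, where the expected quantum isometry group is $K_N^{[\infty]}$. Here one needs the complex analogue of the chain Proposition 5.2 $\to$ Lemma 5.3 $\to$ Lemma 5.4 $\to$ Proposition 5.5. The existence part, $K_N^{[\infty]}\subset G^+(S^{N-1,1}_{\mathbb C,**})$, follows from a direct verification using the complex version of Proposition 4.3 (relations $\alpha\beta\gamma=0$ for $\alpha,\beta,\gamma\in\{a,a^*,b,b^*,c,c^*\}$ with $a\neq c$ on the same row, and $ab^*b=b^*ba$ type commutations). For the converse, the key technical step is the linear independence of $\{x_a^2x_b,x_ax_bx_a,x_a^3\}$ viewed now for $x_i=z_i,z_i^*$, which requires a complex $2\times 2$ matrix model generalising Lemma 5.3; here one models $z_i\to\gamma_i$ via a suitable higher-dimensional complex polygonal sphere, and verifies independence at the level of the matrix entries. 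Once this independence is in hand, the argument of Lemma 5.4 converts the coaction condition into a finite list of relations on the $u_{ij}$, and the summation trick of Proposition 5.5 (successive multiplications by $u_{ka}^2$ and $u_{jb}^2$, followed by summation and use of $\sum_k u_{ka}^*u_{ka}=1$) forces the standard coordinates to be partial isometries with $[a,b^*b]=0$, which is precisely the defining condition of $K_N^{[\infty]}$. I expect the correct bookkeeping of the star-structure in this last step, and the construction of the complex matrix model for independence, to be the most delicate points; everything else is a transcription of sections 3--5 via $x_i=z_i,z_i^*$.
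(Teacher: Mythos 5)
Your proposal follows the same route as the paper's proof: the five results on top and at right are quoted from \cite{ban}, and the four new cases are handled by porting the real arguments (the union decomposition of Proposition~3.3(2) for $S^{N-1,1}_\mathbb C$, the twisted trick of Theorem~3.5 for $\bar S^{N-1,1}_\mathbb C$, and the Lemma~5.3--5.4 and Proposition~5.5 chain for $S^{N-1,1}_{\mathbb C,**}$) under the substitution $x_i\to z_i,z_i^*$. You correctly flag the complex matrix model for the linear-independence step (Lemma~5.3) as the one place where the translation requires genuine new input rather than mechanical replacement, which the paper's proof in fact glosses over by simply asserting that the computations extend.
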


\begin{proof}
The idea is that the proof here is quite similar to the proof in the real case, by replacing $H_N,O_N$ with their complex analogues $K_N,U_N$.

More precisely, the results for the 5 spheres on top and on the right are already known from \cite{ban}. Regarding the remaining 4 spheres, the proof here is as follows:

(1) We have $S^{N-1,0}_\mathbb C=\mathbb T^{\oplus N}$, whose quantum isometry group is indeed $K_N^+$. This follows as in \cite{bbc}, by adapting the proof from there of $G^+(\mathbb Z_2^{\oplus N})=H_N^+$.

(2) We have a decomposition $S^{N-1,1}_\mathbb C=\bigcup_{i<j}(S^1_\mathbb C)^{\{i,j\}}$, which is similar to the one in the real case, and the reduction method in the proof of Proposition 3.3 (2) applies, and shows that the quantum isometry group is $K_N^+\cap U_N=K_N$, as claimed.

(3) Regarding $\bar{S}^{N-1,1}_\mathbb C$, the first part of the proof of Theorem 3.5 above extends to the complex case, by using as key ingredient the formula $G^+(\bar{S}^{N-1}_\mathbb C)=\bar{U}_N$ from \cite{ban}. The second part extends as well, by replacing everywhere the variables $x_i$ by the variables $x_i=z_i,z_i^*$, and shows that the quantum isometry group is $K_N$, as claimed.

(4) Finally, regarding $\bar{S}^{N-1,1}_{\mathbb C,**}$, all the computations in the proof of Lemma 5.4 and Proposition 5.5 above extend to the complex case, by replacing everywhere the variables $x_i$ by the variables $x_i=z_i,z_i^*$, and show that the quantum isometry group is $K_N^{[\infty]}$.
\end{proof}

Regarding the remaining complex polygonal spheres, the situation here is quite similar to the one in the real case. Techically speaking, the problem is that Proposition 3.3, whose complex analogue can be shown to fully hold, is quite unobvious to extend.

As a conclusion, at the abstract classification level we have enlaged the set of 10 spheres in \cite{ban} with 8 more spheres, which should be generally regarded as being not smooth. We should mention that the $10\to 18$ extension announced in \cite{ban}, via free complexification, is of course different from the one performed here. The extension via free complexification still remains to be done, but ideally under the present, upgraded formalism.

This adds to the various questions raised throughout the paper.

\end{document}